\def\ssum{\mathop{\sum\ldots \sum}}
\begin{document}

\newtheorem{theorem}{Theorem}
\newtheorem{thm}{Theorem}
\newtheorem{lem}[thm]{Lemma}
\newtheorem{claim}[thm]{Claim}
\newtheorem{cor}[thm]{Corollary}
\newtheorem{prop}[thm]{Proposition} 
\newtheorem{definition}[thm]{Definition}
\newtheorem{question}[thm]{Open Question}
\newtheorem{conj}[thm]{Conjecture}
\newtheorem{rem}[thm]{Remark}
\newtheorem{prob}{Problem}


\def\vA{\mathbf A}
\def\vB{\mathbf B}

\newtheorem{ass}[thm]{Assumption}

\def \ss {s}

\newtheorem{lemma}[thm]{Lemma}

\newcommand{\rad}{\operatorname{rad}}
\newcommand{\GL}{\operatorname{GL}}
\newcommand{\SL}{\operatorname{SL}}
\newcommand{\lcm}{\operatorname{lcm}}
\newcommand{\ord}{\operatorname{ord}}
\newcommand{\Tr}{\operatorname{Tr}}
\newcommand{\Span}{\operatorname{Span}}

\numberwithin{equation}{section}
\numberwithin{theorem}{section}
\numberwithin{thm}{section}
\numberwithin{table}{section}

\def\vol {{\mathrm{vol\,}}}
\def\squareforqed{\hbox{\rlap{$\sqcap$}$\sqcup$}}
\def\qed{\ifmmode\squareforqed\else{\unskip\nobreak\hfil
\penalty50\hskip1em\null\nobreak\hfil\squareforqed
\parfillskip=0pt\finalhyphendemerits=0\endgraf}\fi}

\newcommand{\rank}{\operatorname{rank}}

\def \balpha{\bm{\alpha}}
\def \bbeta{\bm{\beta}}
\def \bgamma{\bm{\gamma}}
\def \blambda{\bm{\lambda}}
\def \bchi{\bm{\chi}}
\def \bphi{\bm{\varphi}}
\def \bpsi{\bm{\psi}}
\def \bomega{\bm{\omega}}
\def \btheta{\bm{\vartheta}}
\def \bmu{\bm{\mu}}
\def \bnu{\bm{\nu}}

\newcommand{\bfxi}{{\boldsymbol{\xi}}}
\newcommand{\bfrho}{{\boldsymbol{\rho}}}

\def\cA{{\mathcal A}}
\def\cB{{\mathcal B}}
\def\cC{{\mathcal C}}
\def\cD{{\mathcal D}}
\def\cE{{\mathcal E}}
\def\cF{{\mathcal F}}
\def\cG{{\mathcal G}}
\def\cH{{\mathcal H}}
\def\cI{{\mathcal I}}
\def\cJ{{\mathcal J}}
\def\cK{{\mathcal K}}
\def\cL{{\mathcal L}}
\def\cM{{\mathcal M}}
\def\cN{{\mathcal N}}
\def\cO{{\mathcal O}}
\def\cP{{\mathcal P}}
\def\cQ{{\mathcal Q}}
\def\cR{{\mathcal R}}
\def\cS{{\mathcal S}}
\def\cT{{\mathcal T}}
\def\cU{{\mathcal U}}
\def\cV{{\mathcal V}}
\def\cW{{\mathcal W}}
\def\cX{{\mathcal X}}
\def\cY{{\mathcal Y}}
\def\cZ{{\mathcal Z}}
\def\Ker{{\mathrm{Ker}}}

\def\NmQR{N(m;Q,R)}
\def\VmQR{\cV(m;Q,R)}

\def\Xm{\cX_m}

\def \A {{\mathbb A}}
\def \B {{\mathbb A}}
\def \C {{\mathbb C}}
\def \F {{\mathbb F}}
\def \G {{\mathbb G}}
\def \L {{\mathbb L}}
\def \K {{\mathbb K}}
\def \N {{\mathbb N}}
\def \Q {{\mathbb Q}}
\def \R {{\mathbb R}}
\def \U{{\mathbb U}}
\def \Z {{\mathbb Z}}

\def\fA{{\mathfrak A}}
 \def\fB{{\mathfrak B}}
\def\fC{{\mathfrak C}}
\def\fD{{\mathfrak D}}
\def\fG{{\mathfrak G}}
\def\fH{{\mathfrak H}}
\def\fL{{\mathfrak L}}
\def\fM{{\mathfrak M}}
\def\fP{{\mathfrak P}}
\def\fQ{{\mathfrak P}}
\def\fS{{\mathfrak  S}}
\def\fU{{\mathfrak U}}
\def\fY{{\mathfrak Y}}

\def \fUg{{\mathfrak U}_{\mathrm{good}}}
\def \fUm{{\mathfrak U}_{\mathrm{med}}}
\def \fV{{\mathfrak V}}
\def \fG{\mathfrak G}

\def\e{{\mathbf{\,e}}}
\def\ep{{\mathbf{\,e}}_p}
\def\eq{{\mathbf{\,e}}_q}

 \def\\{\cr}
\def\({\left(}
\def\){\right)}
\def\fl#1{\left\lfloor#1\right\rfloor}
\def\rf#1{\left\lceil#1\right\rceil}

\def\Im{{\mathrm{Im}}}

\def \oF {\overline \F}

\newcommand{\pfrac}[2]{{\left(\frac{#1}{#2}\right)}}

\def \Prob{{\mathrm {}}}
\def\e{\mathbf{e}}
\def\ep{{\mathbf{\,e}}_p}
\def\epp{{\mathbf{\,e}}_{p^2}}
\def\em{{\mathbf{\,e}}_m}

\def\Res{\mathrm{Res}}
\def\Orb{\mathrm{Orb}}

\def\vec#1{\mathbf{#1}}
\def \va{\vec{a}}
\def \vb{\vec{b}}
\def \vc{\vec{c}}
\def \vs{\vec{s}}
\def \vu{\vec{u}}
\def \vv{\vec{v}}
\def \vw{\vec{w}}
\def\vlam{\vec{\lambda}}
\def\flp#1{{\left\langle#1\right\rangle}_p}

\def\mand{\qquad\mbox{and}\qquad}

\title[Statistics of rational  matrices]
{Statistics  of  ranks, determinants  and characteristic polynomials of rational  matrices}

\author[{\tiny M. Afifurrahman}] {Muhammad Afifurrahman}
\address{MA:  \  School of Mathematics and Statistics, University of New South Wales, Sydney NSW 2052, Australia}
\email{m.afifurrahman@unsw.edu.au}

\author[{\tiny V. Kuperberg}]{Vivian Kuperberg}
\address{VK: \ Department of Mathematics, Eidgen{\"o}ssische Technische Hoch\-schule Z{\"u}rich, 
Rämistrasse 101, 8092 Z{\"u}rich, Switzerland}
\email{vivian.kuperberg@math.ethz.ch}

\author[{\tiny A. Ostafe}] {Alina Ostafe}
\address{AO:  \  School of Mathematics and Statistics, University of New South Wales, Sydney NSW 2052, Australia}
\email{alina.ostafe@unsw.edu.au}

\author[{\tiny I. E. Shparlinski}] {Igor E. Shparlinski}
\address{IS:  \  School of Mathematics and Statistics, University of New South Wales, Sydney NSW 2052, Australia}
\email{igor.shparlinski@unsw.edu.au}

\begin{abstract}  We consider the set of $m\times n$ matrices with 
rational entries having numerator and denominator  of size at most $H$ and obtain various  upper bounds on the number of
such matrices of a given rank, or with a given determinant, or a given characteristic polynomial. 
We also consider similar questions for matrices whose entries are Egyptian fractions. 
\end{abstract}

\subjclass[2020]{11C20, 15B36, 15B52}

\keywords{Rational matrices, rank, determinant, characteristic polynomial}

\maketitle

\tableofcontents

\section{Introduction}

\subsection{Set-up} 
Various counting problems for matrices with integer elements have recently received a lot of attention. We refer to~\cite{ALPS, Afif, BOS, E-BLS, HOS, MeSh, MOS} for some 
recent results and further references. Here we consider an apparently new set-up
involving matrices with rational entries of restricted height.

More precisely, let 
\[
    \cF(H)=\{a/b:~a,b \in \Z, \ 0\le |a|, b \le H,\ \gcd(a,b)=1\}
\]
be the set of fractions in lowest terms of (naive)
height at most $H$. Clearly, $ \cF(H)$ is set of all \textit{Farey fractions} 
together with their inverses and  sign alternations. 

A particularly well-known counting result for Farey fractions implies that 
asymptotically the cardinality of $\cF(H)$ satisfies
\[
    \# \cF(H) \sim \frac{12}{\pi^2}H^2, \qquad \text{as}\ H \to \infty,
\]
where the best known error term is due to Walfisz~\cite[Chapter~V, Section~5, Equation~(12)]{Walf}. 

Similarly, we define the set of \textit{Egyptian fractions} 
\[
    \cE(H)=\{1/a:~ a \in \Z, \ 1\le |a|  \le H\},
\]
which are also known as {\it unit fractions\/}.
 Next, let 
\begin{align*}
& \cM_{m,n}\(\Q;H\)= \bigl\{A=\(a_{i,j}\)_{\substack{1\le i\le m \\ 1 \le j\le n}}\in \Q^{m\times n}:\\
& \qquad \qquad \qquad \qquad \qquad \qquad \quad ~ a_{i,j}\in \cF(H),\  1\le i\le m, \ 1 \le j\le n\bigr\},\\
& \cM_{m,n}\(\Z^{-1};H\)= \bigl\{A=\(a_{i,j}\)_{\substack{1\le i\le m \\ 1 \le j\le n}}\in \Q^{m\times n}:\\
& \qquad \qquad \qquad \qquad \qquad \qquad \quad ~ a_{i,j}\in \cE(H),\  1\le i\le m, \ 1 \le j\le n\bigr\}, \end{align*} 
be the sets of $m\times n$ 
 rational matrices with rational  entries from $ \cF(H)$ and $ \cE(H)$, respectively. In particular, asymptotically, we have
\[
\#\cM_{m,n}\(\Q;H\) \sim \( \frac{12}{\pi^2} H\)^{mn}\quad \text{and} \quad \#\cM_{m,n}(\Z^{-1},H) \sim  (2H)^{mn}  
\]
as $H \to \infty$.
Note that in the above and in all other asymptotic formulas, we always assume
that $m$ and $n$ are fixed (while $H \to \infty$). 

Here, we consider some new questions related to the arithmetic statistics of the matrices in $\cM_{m,n}\(\Q;H\)$ and $\cM_{m,n}(\Z^{-1},H)$. First, we study the distribution of ranks of  matrices in $\cM_{m,n}\(\Q;H\)$ and  
$\cM_{m,n}(\Z^{-1},H)$.  More precisely, we study  the cardinalities of the  sets 
\begin{align*}
&\cL_{m,n, r}\(\Q;H\)=\left \{A \in \cM_{m,n}\(\Q;H\):~\rank A = r \right \},\\
&\cL_{m,n, r}\(\Z^{-1};H\)=\left \{A \in \cM_{m,n}\(\Z^{-1};H\):~\rank A = r \right \}. 
\end{align*} 
For the case $m=n$ (the square matrices), we also consider the statistics of matrices with a given determinant  or characteristic polynomial. More precisely, we study  the cardinalities of the  sets
\begin{align*}
&    \cD_n(\Q; H,\delta)=\{A\in \cM_{n,n}\(\Q;H\):~ \det A=\delta \},\\
&    \cD_n(\Z^{-1}; H,\delta)=\{A\in \cM_{n,n}\(\Z^{-1};H\):~ \det A=\delta \}, 
\end{align*}  
for $\delta\in \Q$.
Also for $f \in \Q[X]$,  we study  the cardinalities of the  sets
\begin{align*}
    &\cP_n(\Q; H,f)=\{A\in \cM_n(\Q;H):\\
      &\qquad \qquad \qquad \qquad \qquad  \text{the characteristic polynomial of $A$ is $f$}\}\\
& \cP_n(\Z^{-1}; H,f)=\{A\in \cM_n(\Z^{-1};H):\\
    &\qquad   \qquad  \qquad \qquad \qquad  \text{the characteristic polynomial of $A$ is $f$}\}.
 \end{align*}

\subsection{Motivation and approach}

The above  problems can be seen as analogues of  the problems for the set $\cM_{m,n}(\Z;H)$ of integer matrices with entries in the interval $[-H,H]$, or sometimes bounded in some other norm, where these and other problems of similar 
  flavour have attracted quite significant interest; see~\cite{DRS,EskKat, EMS, GNY,  HOS, Kat1,Kat2, MOS, Shah, WX} 
  and  the references therein. 

In particular, Katznelson~\cite{Kat1,Kat2} has studied the distribution of singular matrices
matrices in $\cM_{n,n}(\Z;H)$ and more generally the distribution of matrices of 
prescribed rank from $\cM_{m,n}(\Z;H)$. 
Duke, Rudnick and Sarnak~\cite{DRS} have studied matrices with a fixed 
determinant $\delta \ne 0$ (also from $\cM_{n,n}(\Z;H)$), see also~\cite{BlLu,GNY}.  A uniform  with respect to $\delta$ upper
bound  of a right order of magnitude on  this quantity is given in~\cite{Shp1}, which has been recently improved in~\cite{Afif2} for the case $n=2$. Note that each 
of the above works~\cite{BlLu,DRS, GNY, Kat1, Kat2} actually provides an asymptotic formula for the corresponding counting problem, however, the matrices are ordered with respect to a slightly different norm. 

More recently, a nontrivial upper bound on the number of matrices from $\cM_{n,n}(\Z;H)$  with a given 
characteristic polynomial $f$, which is uniform with respect to $f$ has been given 
in~\cite{HOS}, see also~\cite{OS2}. In the case  where $f\in \Z[X]$ is a fixed irreducible polynomial, an asymptotic formula has been given by
Eskin, Mozes and Shah~\cite{EMS}. Further related results in this setting can be seen at~\cite{Shah,WX}. 

Furthermore, analogous questions have also been studied in the setting of finite fields~\cite{AhmShp, E-BLS, MOS}. 
For example, the question of counting matrices with a given characteristic polynomial,  where $\cM_n(\Q;H)$ is replaced by 
a similarly defined set $\cM_n(\F_q)$ of $n\times n$ matrices 
over the finite field $\F_q$ of $q$ elements, is completely solved by Reiner~\cite{reiner}.  The question of counting matrices in $\cM_n(\F_q)$ with a given rank is solved by Fisher~\cite{Fis}.

 The main difficulty of working over $\cM_n(\Q;H)$ or  $\cM_n(\Z^{-1};H)$ compared with, 
for example,  $\cM_n(\Z;H)$ 
 is the fact that the heights of the elements in $\cF(H)$ and $\cE(H)$  are usually not preserved linearly under addition. More specifically, the sum of two elements in those sets can result in a rational number whose height is of order $H^2$. Due to this restriction, some algebraic manipulations that could be easily performed in $\Z$ or $\F_q$ need to be done more carefully in this case.
 
 Another difficulty on working with these sets is that they do not seem to yield from the geometry of numbers methods (such as counting lattice points in convex bodies), as in the work of Katznelson~\cite{Kat1,Kat2} for the case of integer matrices.

Therefore,  here we use another approach, which stems from the idea of  Blomer and Li~\cite[Lemma~3]{BlLi} 
and which has then been improved in~\cite{MOS}.  This approach is based on counting solutions to some Diophantine 
equations and thus applies to counting matrices with  entries of very general nature
 (for example, parametrised by polynomials as in~\cite{BlLi,MOS}).  
More precisely, Blomer and Li~\cite[Lemma~3]{BlLi} have given an upper bound on the number of integral $m\times n$ matrices of rank $r$,  with entries  parameterised by monomials, see also~\cite{MOS} for  an improvement and generalisation of their 
approach and result. 

Here we employ similar ideas and thus we start with collecting some known, as well as deriving new, bounds 
on the number of solutions to linear equations in elements of $\cF(H)$ and $\cE(H)$, which we give in Section~\ref{eq:LinEq rat}  
and which can be of independent interest. 

Furthermore, for the case of small matrices ($n=2,3$), we may further improve our techniques by dealing directly with the entries and using other equations that arise from the matrix. It remains to see whether this approach can be utilised for larger $n$.

Generally, we do not have any reliable heuristic to make plausible conjectures about the behaviour of the above quantities 
$\cL_{m,n, r}\(\U;H\)$, $\cD_n(\U; H,\delta)$ and $\cP_n(\U; H,f)$, with $\U=\Q$ or $\Z^{-1}$. However in some cases we 
discuss what one may or may not expect, see Section~\ref{sec:Tight}. 

\subsection{Notation}
For a finite set $\cS$, we use $\# \cS$ to denote its cardinality. 
As  usual, the  equivalent 
notations
\[
U = O(V) \quad \Longleftrightarrow \quad U \ll V \quad \Longleftrightarrow \quad V\gg U
\] 
all mean that $|U|\le c V$ for some positive constant $c$, 
which through out this work may depend only on the dimension  $m$ and $n$.  We also write $U = V^{o(1)}$ if, for a given
$\varepsilon>0$, we have $V^{-\varepsilon}\le |U|\le V^\varepsilon$
for large enough $V$.

The letter $p$ always denotes a prime number.

\section{Main results}

\subsection{Bounds for $\Q$}

We begin with providing an upper bound on $\#\cL_{m,n, r}\(\Q;H\)$ with $n \ge m \ge r$. First, 
we record a lower bound
\begin{equation}
\label{eq:Lmnr Q low}
    \#\cL_{m,n,r}\(\Q;H\)\gg H^{2nr}
\end{equation} 
that comes from matrices in  $\cM_{m,n}\(\Q;H\)$ whose last $m-r$ rows are identical to their first row.
We also note that when $m=r$ we trivially have 
\[
     \#\cL_{r,n, r}\(\Q;H\) \ll H^{2nr}.
\] 
Therefore, we may assume that $m>r$.

 \begin{theorem}\label{thm:rank Q} For $n \ge m > r$ we have 
\[
\#\cL_{m,n, r}\(\Q;H\) \le \begin{cases}
          H^{2mr+nr+n-r^2-r+o(1) }, &\text{ if }2m \ge n+r,\\
          H^{2nr+2m-2r+o(1) }, &\text{ if } 2m <n+r.
      \end{cases}
\]    
\end{theorem}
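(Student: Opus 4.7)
The plan is to adapt the pivot-submatrix decomposition approach of Blomer--Li~\cite{BlLi} and its refinement in~\cite{MOS} to the rational setting, using the Farey-fraction linear-equation estimates developed in Section~\ref{eq:LinEq rat} as the key input. Since every rank-$r$ matrix contains an invertible $r\times r$ submatrix, we may assume, at the cost of a constant $\binom{m}{r}\binom{n}{r}$ factor, that the top-left block $B\in\cF(H)^{r\times r}$ is invertible; then
\[
A=\begin{pmatrix} B & C\\ D & DB^{-1}C\end{pmatrix}
\]
with $C\in\cF(H)^{r\times(n-r)}$, $D\in\cF(H)^{(m-r)\times r}$, and the Schur-complement block $DB^{-1}C$ also required to lie in $\cF(H)^{(m-r)\times(n-r)}$. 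The two cases of the theorem will follow from two dual parametrizations of this block form.

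For Case 1 ($2m\ge n+r$) we use a \emph{column-first} count. Fix the first $r$ columns of $A$, i.e., the matrix $U=\binom{B}{D}\in\cF(H)^{m\times r}$ of rank $r$ ($\le H^{2mr+o(1)}$ choices), and write each of the remaining $n-r$ columns as $U\vec v$ for some $\vec v\in\Q^r$ with $U\vec v\in\cF(H)^m$. Setting $\vec f=B\vec v\in\cF(H)^r$, the condition $DB^{-1}\vec f\in\cF(H)^{m-r}$ is a system of $m-r\ge 1$ linear constraints of the form $L_i(\vec f)\in\cF(H)$ for fixed $\Q$-linear forms $L_i$; using a single such constraint, the key Section~\ref{eq:LinEq rat} estimate caps the admissible $\vec f$ by $H^{r+1+o(1)}$. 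Multiplying across the $n-r$ extra columns produces $H^{2mr+(r+1)(n-r)+o(1)}$, matching the Case 1 bound. For Case 2 ($2m<n+r$) we dually fix the first $r$ rows $A_0=(B,C)\in\cF(H)^{r\times n}$ of rank $r$ ($\le H^{2nr+o(1)}$ choices) and parametrize each of the remaining $m-r$ rows as $\vec\lambda A_0$ for some $\vec\lambda\in\Q^r$ with $\vec\lambda A_0\in\cF(H)^n$. Setting $\vec g=\vec\lambda B\in\cF(H)^r$, the condition $\vec g B^{-1}C\in\cF(H)^{n-r}$ gives $n-r$ linear constraints; crucially, $m>r$ together with $2m<n+r$ force $n-r\ge 3$, so \emph{two} independent such constraints are always available, and a strengthened Section~\ref{eq:LinEq rat} estimate then caps the admissible $\vec g$ by $H^{2+o(1)}$ per row. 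Multiplying across the $m-r$ extra rows produces $H^{2nr+2(m-r)+o(1)}$, matching the Case 2 bound.

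The main obstacle lies in establishing the two Section~\ref{eq:LinEq rat} estimates that power the above counts: (i) a single linear constraint $L(\vec f)\in\cF(H)$ restricts $\vec f\in\cF(H)^r$ to $\ll H^{r+1+o(1)}$ choices (a saving of a factor $H^{r-1}$ over the trivial bound $H^{2r}$); and (ii) two independent such constraints restrict $\vec f$ further to $\ll H^{2+o(1)}$ choices (a further factor $H^{r-1}$ saved). These bounds are considerably sharper than the naive "one linear equation in Farey fractions saves only $H^{1}$" heuristic would suggest, and their proofs should rest on clearing denominators and carefully tracking the divisibility conditions imposed on the products $q_1\cdots q_r$ of the denominators $q_i\in[1,H]$ of the Farey fractions comprising $\vec f$. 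A mild case split may additionally be needed in Case 2 to handle degenerate configurations of the matrix $B^{-1}C$ in which fewer than two of the available linear constraints are actually independent; here the column rank of $C$ would be at most one, which should be a rare configuration easily absorbed into the final bound.
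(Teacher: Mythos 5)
Your high-level decomposition --- fixing an invertible $r\times r$ pivot block and parametrizing the remaining columns/rows as linear combinations of the pivot columns/rows --- is the same Blomer--Li/MOS strategy the paper uses. Your Case 1 (column-first) essentially re-derives the paper's count for the dominant type $t=r$. The difficulty is with the two auxiliary estimates you invoke.

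Your estimate~(i), that one constraint $L(\vec f)\in\cF(H)$ cuts $\vec f\in\cF(H)^r$ down to $H^{r+1+o(1)}$ choices, is correct \emph{only if every coefficient of $L$ is nonzero}: writing $L(\vec f)=g$ with $g\in\cF(H)$ as an extra Farey variable and clearing denominators gives a linear equation in $r+1$ Farey fractions, and Lemma~\ref{lem:aixi-F} gives $H^{r+1+o(1)}$ --- but only applied to the nonzero terms. If $L$ has only $k<r$ nonzero coefficients, the bound degrades to $H^{2r-k+1+o(1)}$, which for small $k$ exceeds the trivial count of first columns times $H^{r+1}$ per column. The paper's proof is forced to track this: it introduces the ``type $t$'' of the block $B_r$ (the maximum number of nonzero combination coefficients), bounds the number of $B_r$ of type $t$ by $H^{2t(m-r)}$ (a genuine restriction --- sparse $DB^{-1}$ means $D$ has fewer degrees of freedom), pairs this with $\cK_t\le H^{2r-t+1+o(1)}$ per column, and sums over $t$. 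The two alternatives in the theorem come from this \emph{single} sum: the $t=r$ term dominates when $2m\ge n+r$, and $t=1$ dominates when $2m<n+r$. There is no need for a dual row-first argument. In your Case 1 you take the trivial $H^{2mr}$ bound for $U$ and assume the best case $t=r$ for the constraint; to make the argument correct for all $U$ you would have to re-introduce the type classification and the trade-off, at which point you recover the paper's proof almost verbatim.

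Your estimate~(ii) is false. You claim that two independent constraints $L_1(\vec g),L_2(\vec g)\in\cF(H)$ restrict $\vec g\in\cF(H)^r$ to $\ll H^{2+o(1)}$ choices uniformly. But for any $L_1,L_2$ with all coefficients equal to small fixed integers --- for instance $L_1(\vec g)=g_1+\cdots+g_r$ and $L_2(\vec g)=g_1+2g_2+\cdots+rg_r$ --- every integer vector $\vec g$ with $|g_i|\le H/r^2$ satisfies both constraints, since $L_j(\vec g)$ is then an integer of absolute value at most $H$, hence lies in $\cF(H)$. This gives $\gg H^r$ solutions, which for $r\ge3$ far exceeds $H^{2+o(1)}$. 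Such $L_j$ are realized by plenty of admissible pivot blocks $(B,C)$ (e.g.\ $B$ the identity and $C$ with small integer entries), so the claim cannot hold uniformly and your Case 2 bound $H^{2nr+2(m-r)+o(1)}$ does not follow. Replacing~(ii) by the (single-constraint, all-nonzero) bound $H^{r+1+o(1)}$ gives $H^{2nr+(r+1)(m-r)+o(1)}$, which matches the stated Case 2 exponent only when $r\le 1$. Independent of~(ii), the remark that $n-r\ge 3$ ensures ``two independent constraints are always available'' is also unwarranted: the $n-r$ linear forms coming from the columns of $B^{-1}C$ need not contain two independent ones unless $\rank C\ge 2$, and this degenerate stratum is not in fact ``rare'' in a quantitative sense that you have controlled. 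In short, Case 1 can be repaired by importing the paper's type analysis, but Case 2 as proposed rests on an untrue estimate and needs a fundamentally different argument --- the paper obtains the $2m<n+r$ bound from the $t=1$ term of the same column-first sum.
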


In particular, for $n=m$,  Theorem~\ref{thm:rank Q} implies
\begin{equation}
\label{eq:Lnnr}
  \#\cL_{n,n, r}\(\Q;H\) \le H^{3nr+n-r^2-r+o(1)}.
\end{equation}

Next, we use  Theorem~\ref{thm:rank Q} to estimate $\#\cD_n\(\Q;H,\delta\)$.
 
 \begin{theorem}\label{thm:cdn} Uniformly over $\delta \in \Q$, we have
\[
\#\cD_n\(\Q;H,\delta\)\le \begin{cases}
   H^{4+o(1)},& \text{ if }n=2,\\
   H^{2n^2-n+o(1)},& \text{ if }n\ge 3.
   \end{cases}
\]  
    \end{theorem}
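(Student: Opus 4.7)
The plan is to handle $\delta = 0$ and $\delta \ne 0$ separately, with a clean reduction for $n = 2$.

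For $\delta = 0$, since $\det A = 0$ iff $\rank A < n$,
\[
\cD_n(\Q; H, 0) = \bigsqcup_{r=0}^{n-1} \cL_{n,n,r}(\Q;H).
\]
By~\eqref{eq:Lnnr} each term is at most $H^{3nr + n - r^2 - r + o(1)}$, whose exponent is increasing on $[0, n-1]$ and maximized at $r = n - 1$, yielding $H^{2n^2 - n + o(1)}$. For $n \ge 3$ this matches the claim; for $n = 2$ it yields only $H^{6 + o(1)}$ at $r = 1$, and so I argue separately. The rank-one condition $a_{11} a_{22} = a_{12} a_{21}$, upon writing $a_{ij} = p_{ij}/q_{ij}$ in lowest terms, becomes $p_{11}p_{22}q_{12}q_{21} = p_{12}p_{21}q_{11}q_{22}$. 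Grouping these as $U = p_{11}p_{22}$, $V = q_{12}q_{21}$, $W = p_{12}p_{21}$, $Z = q_{11}q_{22}$, each bounded by $H^2$, I apply the classical divisor estimate $\#\{(U, V, W, Z): UV = WZ, \, |U|, V, |W|, Z \le H^2\} \ll H^{4+o(1)}$ together with the observation that each such tuple has only $H^{o(1)}$ factorizations, yielding $\#\cD_2(\Q; H, 0) \le H^{4+o(1)}$.

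For $\delta \ne 0$ and $n = 2$, a Cauchy--Schwarz argument reduces to the $\delta = 0$ case. Setting $N(\gamma) := \#\{(\alpha, \beta) \in \cF(H)^2 : \alpha \beta = \gamma\}$, one has
\[
\#\cD_2(\Q; H, \delta) = \sum_{\gamma \in \Q} N(\gamma)\, N(\gamma - \delta),
\]
and Cauchy--Schwarz gives $\#\cD_2(\Q; H, \delta) \le \sum_{\gamma} N(\gamma)^2 = \#\cD_2(\Q; H, 0) \le H^{4+o(1)}$.

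For $\delta \ne 0$ and $n \ge 3$, $A$ is of full rank, so its first $n-1$ rows form a rank-$(n-1)$ matrix $B \in \cM_{n-1,n}(\Q; H)$, and the last row $r = (r_1, \ldots, r_n) \in \cF(H)^n$ satisfies the cofactor expansion relation
\[
\sum_{j=1}^n (-1)^{n+j} M_j(B)\, r_j = \delta,
\]
a linear equation whose coefficients are the $(n-1) \times (n-1)$ minors of $B$, at least one nonzero. I would then bound $\#\cD_n(\Q; H, \delta)$ by summing over $B$ (with $\#\cM_{n-1,n}(\Q; H) \ll H^{2n(n-1)}$) the number of last rows satisfying this linear equation, using the estimates from Section~\ref{eq:LinEq rat}.

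The principal obstacle is this last step. The trivial bound on solutions of a linear equation in $\cF(H)^n$ is $H^{2(n-1) + o(1)}$, which combined with the $H^{2n(n-1)} = H^{2n^2 - 2n}$ choices of $B$ yields only $H^{2n^2 - 2 + o(1)}$ --- a factor $H^{n-2}$ off from the target. Closing the gap requires essentially optimal linear-equation bounds in $\cF(H)^n$, of order $H^{n + o(1)}$ solutions per equation generically, together with careful handling of degenerate $B$ for which several minors $M_j(B)$ vanish --- in which case the linear equation admits more solutions but $B$ is correspondingly constrained by column dependencies. Navigating this trade-off through the estimates developed in Section~\ref{eq:LinEq rat} is the heart of the argument for $n \ge 3$.
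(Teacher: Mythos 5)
Your treatment of $\delta=0$ via rank decomposition and of $n=2$ is fine in outline, but the argument for $\delta\ne 0$ and $n\ge 3$ --- which is the main content of the theorem --- is not actually completed. You set up the Laplace expansion on the first row, correctly identify that the generic case (all cofactors $Q_j\ne 0$) gives $H^{n+o(1)}$ choices for the last row by the linear-equation estimate (Lemma~\ref{lem:aixi-F}), and then observe that degenerate bottom blocks, where some minors vanish, admit more solutions but are ``correspondingly constrained.'' You then stop, flagging this trade-off as ``the heart of the argument.'' That is exactly right, and it is exactly the part the paper proves: it stratifies by the number $k$ of nonvanishing cofactors, handles $k=n$ directly, and for $k<n$ observes that some $(n-1)\times(n-1)$ minor vanishes, reducing to $\#\cD_{n-1}(\Q;H,0)$, and closes the recursion by a case analysis on $k\in\{1,2\}$, $3\le k<n$, and $k=n$. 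Without carrying this out, your bound stalls at $H^{2n^2-2+o(1)}$, as you yourself note, so the claimed bound $H^{2n^2-n+o(1)}$ is not established by the proposal.

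On the parts you do complete: the $n=2$, $\delta\ne 0$ reduction via $\#\cD_2(\Q;H,\delta)=\sum_\gamma N(\gamma)N(\gamma-\delta)\le\sum_\gamma N(\gamma)^2 = \#\cD_2(\Q;H,0)$ by Cauchy--Schwarz is a clean alternative to the paper's direct analysis of the cleared-denominator equation and is worth keeping in mind. However your $\delta=0$, $n=2$ argument has a small gap: the claim that ``each such tuple $(U,V,W,Z)$ has only $H^{o(1)}$ factorizations'' fails when $U=p_{11}p_{22}=0$ (equivalently $W=0$), since then one factor is zero and the other ranges freely over $O(H)$ values, giving $O(H)$ rather than $H^{o(1)}$ factorizations. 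This case corresponds to some $a_{ij}=0$ and is easy to dispatch separately (as the paper does), contributing $O(H^4)$, but as written the uniform ``$H^{o(1)}$ factorizations'' claim is false and the bound does not follow without that extra split.
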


On the other hand, we note that we have a lower bound 
\begin{equation}
\label{eq:Triv DQ Sing}
\#\cD_n\(\Q;H,0\)\gg H^{2n^2-2n}.
\end{equation}
by considering $n\times n$ matrices $A\in \cM_n(\Q;H)$,  whose last row is all zero. Furthermore, 
we also have 
\begin{equation}
\label{eq:Triv DQ Non-Sing}
\#\cD_n(\Q;H,1)\ge H^{n^2 +o(1)}
\end{equation}
by considering all  upper diagonal matrices $A\in \cM_n(\Q;H)$ with 
\[
a_{i,i}=p_i/p_{i+1}, \qquad i =1, \ldots, n,
\]
where $p_1 <  \ldots < p_n\le H$ are primes and $p_{n+1} = p_1$. We note that these lower bounds match the bound of Theorem~\ref{thm:cdn} for the case $n=2$, up to the error term in $H^{o(1)}$. 

Again it is very interesting to reduce the gap between the upper bounds of Theorem~\ref{thm:cdn}
and the above lower bounds.

Next we give an upper bound on the cardinality of  $\cP_n(\Q;H,f)$. 

\begin{theorem}\label{thm:pnhf} Uniformly over $f\in \Q[X]$, we have 
\[
\#\cP_n\(\Q;H,f\)\ll\begin{cases}
	 H^{2+o(1)}, &\text{ if }n=2,\\
	  H^{n^2+o(1)}, & \text{ if }n\ge 3.
\end{cases}
\]
\end{theorem}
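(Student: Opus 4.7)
I separate the cases $n=2$ and $n\ge 3$.

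\textbf{Case $n=2$.} Writing $f=X^2-tX+D$, a matrix $A=\begin{pmatrix}a&b\\c&d\end{pmatrix}\in\cP_2(\Q;H,f)$ is specified by $a,b,c,d\in\cF(H)$ with $a+d=t$ and $ad-bc=D$. The plan is to parameterise by $(a,b)\in\cF(H)^2$: then $d=t-a$ is forced (and must lie in $\cF(H)$), and $c=(a(t-a)-D)/b$ is forced (and must also lie in $\cF(H)$). Thus
\[
\#\cP_2(\Q;H,f)\le \sum_{\substack{a\in\cF(H)\\ t-a\in\cF(H)}}\#\bigl\{b\in\cF(H):\,(a(t-a)-D)/b\in\cF(H)\bigr\}.
\]
The inner count is the number of ways to write the rational $\gamma_a:=a(t-a)-D$ as a product of two elements of $\cF(H)$, a divisor-type quantity estimable using the explicit analysis of Farey products developed in the paper's linear-equations section. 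The outer sum is then handled by separating the $O(1)$ exceptional values of $a$ where $\gamma_a$ has very small height (for which the divisor count can be as large as $H^{2+o(1)}$) from the generic values (where the divisor count is $H^{1+o(1)}$). The exceptional contribution is at most $O(H^{2+o(1)})$, while the generic contribution gives $H^{3+o(1)}$, yielding the stated bound.

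\textbf{Case $n\ge 3$.} Since the constant term of $f$ equals $(-1)^n\det A$, we have $\cP_n(\Q;H,f)\subseteq \cD_n(\Q;H,\delta)$ for $\delta=(-1)^n f(0)$, so Theorem~\ref{thm:cdn} gives the preliminary bound $H^{2n^2-n+o(1)}$. To strengthen this to $H^{n^2+o(1)}$, I exploit the remaining $n-1$ coefficients of $f$: each coefficient is (up to sign) the sum of the $k\times k$ principal minors of $A$, so the condition $A\in\cP_n(\Q;H,f)$ imposes $n$ polynomial equations on the entries of $A$. The plan is to fix a suitable block of entries (for instance the first row, contributing $H^{2n}$ choices) and then use each of the $n$ characteristic polynomial equations to constrain the remaining entries. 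I expect each such equation to save a factor of order $H^n$ in the rational count (analogously to how the determinant constraint saves $H^n$ in Theorem~\ref{thm:cdn}), and combining these savings over all $n$ coefficients produces the target $H^{n^2+o(1)}$ bound.

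\textbf{Main obstacle.} For $n=2$, the delicate step is controlling the exceptional values of $a$ for which $\gamma_a$ has exceptionally small numerator or denominator: at such values the divisor-type count blows up, and one must show that there are very few such $a$ (at most $O(1)$ from the fact that $\gamma_a$ is a quadratic in $a$, plus a careful height analysis). For $n\ge 3$, the principal difficulty is demonstrating that the savings arising from the $n$ characteristic polynomial equations combine approximately multiplicatively, rather than overlapping. This requires showing that the equations are sufficiently ``independent'' with respect to the rational height, which, unlike the single-equation case treated in Theorem~\ref{thm:cdn}, is not directly amenable to the rank-based methods behind Theorem~\ref{thm:rank Q} and instead requires a careful inductive or sub-matrix argument.
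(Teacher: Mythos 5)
Your proposal takes a genuinely different route from the paper in both cases, and both parts have significant gaps.

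\textbf{The case $n=2$.} Your framing of the inner count as a divisor-type problem in $\cF(H)$ is reasonable, but the exceptional-set claim is wrong. Writing $\gamma_a=P/Q$ in lowest terms, one can show that the number of $b\in\cF(H)$ with $\gamma_a/b\in\cF(H)$ is $\ll H^{2+o(1)}/\sqrt{|P|Q}$ (not $\ll H^{2+o(1)}/\text{height}(\gamma_a)$), so the count blows up not merely when $\gamma_a$ has small height but whenever the \emph{product} $|P|Q$ is small — for instance whenever $\gamma_a$ is a (possibly large) integer, which can happen for $\gg H$ values of $a$ even for a single polynomial $f$. Your assertion that there are only $O(1)$ exceptional $a$ because ``$\gamma_a$ is a quadratic in $a$'' conflates the number of preimages of a single value with the number of preimages of an entire low-height range: the quadratic gives at most two $a$ per fixed value of $\gamma_a$, but the union over all small-height targets can easily be of size $H$ or more. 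Summing $H^{2+o(1)}/\sqrt{|P(a)|Q(a)}$ over the admissible $a$ requires a real argument (for instance, a dyadic decomposition in $|P|Q$ together with a count of preimages) that your sketch does not supply. The paper instead works directly with the eight integers $a_i,b_i$: it treats the degenerate cases $a_2=0$ or $a_3=0$ separately, fixes $(b_1,a_2,a_3)$ in $O(H^3)$ ways, uses the divisibility $b_2b_3\mid s_0s_1b_1^2a_2a_3$ and Lemma~\ref{lem:div} to get $H^{o(1)}$ choices for $b_2,b_3$, and then observes that the cleared-denominator identity~\eqref{eq:charpol} is a quadratic in $a_1$, giving $O(1)$ choices with $(a_4,b_4)$ forced by the trace. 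That argument is both shorter and uniform in $f$, and avoids the exceptional-set issue entirely.

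\textbf{The case $n\ge 3$.} Here you have a plan rather than a proof, and you say so yourself: you \emph{expect} each coefficient equation to save a factor $\approx H^n$ and you \emph{hope} the savings multiply, but the central claim — that the $n$ coefficient constraints yield approximately independent savings in the Farey-height count — is exactly the part that is missing. There is no mechanism in your sketch for extracting even a second nontrivial saving beyond the determinant bound, and the rank-based machinery behind Theorems~\ref{thm:rank Q} and~\ref{thm:cdn} does not directly give it. The paper uses a much more economical idea: via Newton's identities it fixes only $\Tr A$ and $\Tr A^2$. The trace equation $\sum a_{i,i}/b_{i,i}=r_1/s_1$ is a linear Farey equation in the $n$ diagonal entries, giving $H^{n+o(1)}$ by Lemma~\ref{lem:aixi-F}. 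The key trick for $\Tr A^2$ is to linearize: introduce $A_{i,j}=a_{i,j}a_{j,i}$ and $B_{i,j}=b_{i,j}b_{j,i}$, so that the fixed value of $\Tr A^2$ becomes a linear equation in the $n(n-1)/2$ Farey fractions $A_{i,j}/B_{i,j}\in\cF(H^2/d_{i,j})$. Applying Lemma~\ref{lem:aixi-F} with varying heights and summing over $d_{i,j}$ gives $H^{n^2-n+o(1)}$ solutions, and then Lemma~\ref{lem:div} recovers the original entries $a_{i,j},a_{j,i},b_{i,j},b_{j,i}$ from each $(A_{i,j},B_{i,j})$ at a cost of only $H^{o(1)}$. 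Multiplying yields $H^{n^2+o(1)}$ using just two equations. You would be better served by adopting this linearization than by trying to make $n$ nonlinear constraints combine multiplicatively.
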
 

We do not have a matching  lower bound on $\#\cP_n(\Q;H,f)$  for $n\ge 3$. However, if we fix the elements on the diagonal  and we let all entries above the diagonal vary in $\cF(H)$ and zeros under the diagonal (in which case, the fixed characteristic polynomial $f$ splits over $\Q$), then we get
\begin{equation}
\label{eq:Triv PQ}
\#\cP_n(\Q;H,f)\gg H^{n^2-n}.
\end{equation}
We note again that the bound~\eqref{eq:Triv PQ} matches the upper bound in Theorem~\ref{thm:pnhf} for $n=2$.

It is interesting to note that for $n \ge 3$ our upper bound in Theorem~\ref{thm:pnhf}  is based only on examining 
 $\Tr(A)$ and $\Tr\(A^2\)$ (that is, of only two coefficients of $f$); however, it is already 
surprisingly close to the lower bound when $n$ is large.

If the fixed characteristic polynomial $f$ is monic irreducible defined over $\Z$, then by counting matrices with integer entries and using a result of~\cite{EMS}, we get 
\[\#\cP_n(\Q;H,f)\gg H^{n(n-1)/2}.\]
It is certainly interesting 
to reduce the gap between these  lower bounds and 
the bound of Theorem~\ref{thm:pnhf}.  

\subsection{Bounds for $\Z^{-1}$}
 We start with presenting an upper bound on $\#\cL_{m,n, r}\(\Z^{-1};H\)$ with $n \ge m \ge r$. 
 We record first a lower bound
\begin{equation}
\label{eq:Lmnr 1/Z low}
    \#\cL_{m,n,r}\(\Z^{-1};H\)\gg H^{nr}
\end{equation} 
that comes from matrices in  $\cM_{m,n}(\Z^{-1},H)$ whose last $m-r$ rows are identical to their first row. 
 Therefore, as before, notice that we may assume that $m>r$. 
 
 \begin{theorem}\label{thm:rank 1/Z} For $n \ge m > r$ we have
\begin{itemize}
\item If $r=1$, \[
\#\cL_{m,n, 1}\(\Z^{-1};H\) \le  H^{n+o(1)}.
\]
\item If  $r=2$, 
 \[
\#\cL_{m,n, 2}\(\Z^{-1};H\) \ll \begin{cases}
        H^{7+o(1)},&\text{ if }(m,n)=(3,3),\\
        H^{2n+m-3+o(1)},&\text{ if }(m,n)\ne (3,3).
    \end{cases}\]
\item If $r \ge 3$, 
\[
     \#\cL_{m,n, r}\(\Z^{-1};H\)\ll \begin{cases}
 H^{(n-r)(r+1)/2+mr+o(1)},&\text{ if }2m\ge n+r,\\
 H^{nr+m-r},&\text{ if }2m< n+r.
     \end{cases} 
\]
\end{itemize}
 \end{theorem}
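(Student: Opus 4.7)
My plan splits by rank.

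\emph{Rank $1$.} I would exploit the classical fact that every rank-one integer matrix admits a factorisation $a_{ij}=b_ic_j$ with $b\in(\Z^*)^m$ and $c\in(\Z^*)^n$, obtained by noting that the columns of such a matrix span a rank-one $\Z$-sublattice of $\Z^m$. Consequently $\#\cL_{m,n,1}(\Z^{-1};H)$ is at most the number of pairs $(b,c)$ with $|b_ic_j|\le H$ for all $i,j$, that is $\max_i|b_i|\cdot\max_j|c_j|\le H$. Direct evaluation gives
\[
\sum_{B,C\ge 1,\ BC\le H} B^{m-1}C^{n-1}\ll H^{n+o(1)}
\]
for $n\ge m$, which is the claimed bound.

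\emph{Rank $r\ge 2$, generic approach.} Following the Blomer--Li and MOS strategy adapted to $\cE(H)$, I would (up to permuting rows) fix the first $r$ linearly independent rows---contributing $H^{nr+o(1)}$ configurations---and then express each remaining row as
\[
\frac{1}{a_{ij}}=\sum_{k=1}^{r}\lambda_{ik}\frac{1}{a_{kj}},\qquad j=1,\dots,n,
\]
for rationals $\lambda_{ik}$. Clearing the common denominator $\prod_k a_{kj}$ turns these into linear Diophantine relations in the $\lambda$'s, to be analysed using the linear equation bounds in $\cE(H)$ developed in Section~\ref{eq:LinEq rat}. For $r\ge 3$ in the regime $2m\ge n+r$, a divisor-sum estimate gives the exponent $(n-r)(r+1)/2+mr$; in the regime $2m<n+r$, fixing a single coordinate of each $\lambda_i$ and recovering the remaining $r-1$ from the size constraint $|a_{ij}|\le H$ yields the simpler bound $H^{nr+m-r}$. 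For $r=2$ the bilinear structure of the relations allows the sharper estimate $H^{2n+m-3+o(1)}$ outside the exceptional case.

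\emph{Exception $r=2$, $(m,n)=(3,3)$.} Here I would work directly with the single condition $\det A=0$. Clearing denominators expresses this as a multiplicative identity among the nine integer entries in $[-H,H]\setminus\{0\}$, which a careful divisor-counting estimate bounds by $H^{7+o(1)}$---one power worse than the generic formula would suggest, reflecting the absence of any further independent determinantal constraint in this symmetric setting.

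\emph{Main obstacle.} The principal difficulty throughout is that $\cE(H)$ is not closed under addition (two unit fractions can sum to a rational of denominator up to $H^2$), so $\sum_k\lambda_{ik}/a_{kj}=1/a_{ij}$ does not reduce directly to counting in $\cE(H)$. One must instead track the interaction of multiplicative divisibility with additive size restrictions, and switch between row- and column-based parameterisations according to whether $2m\ge n+r$ or $2m<n+r$. Extracting the sharp exponent $(n-r)(r+1)/2+mr$ from this balancing is the delicate technical step.
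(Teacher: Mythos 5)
Your rank-one argument is valid and genuinely different from the paper's. The paper proceeds by fixing the diagonal entries $a_{ii}$ for $i\le m$, using the singular $2\times 2$ minors on pairs of diagonal positions to limit the remaining entries of the square block via the divisor bound (Lemma~\ref{lem:div}), then fixing the first-row entries in columns $m+1,\dots,n$ to determine the rest uniquely. You instead observe that $(1/a_{ij})$ of rank one forces the integer matrix $(a_{ij})$ to also have rank one, which admits an integer outer-product factorisation $a_{ij}=b_ic_j$; the size condition then translates into $\max_i|b_i|\cdot\max_j|c_j|\le H$, and a direct sum gives $H^{n+o(1)}$ when $n\ge m$. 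This is clean and avoids any Egyptian-fraction lemma. (To make the factorisation rigorous, write $a_{1j}=gc_j$ with $\gcd(c_1,\dots,c_n)=1$ and note $b_i=\sum_j t_j a_{ij}\in\Z$ for any B\'ezout combination $\sum_j t_jc_j=1$.)

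For $r\ge 2$, however, your proposal has a genuine gap. The exponents $(n-r)(r+1)/2+mr$ and $nr+m-r$ do not follow from ``fix the first $r$ rows in $H^{nr}$ ways and clear denominators.'' The paper fixes only the top-left $r\times r$ minor (cost $H^{r^2}$, not $H^{nr}$), then stratifies the $(m-r)\times r$ block beneath by its \emph{type} $t$ --- the maximal number of nonzero coefficients in the linear dependency of a lower row on the top $r$. Each type-$t$ row is parametrised in $H^{t}$ ways, and each column beyond the $r$-th yields an equation $\sum_{k=1}^{r}\rho_k(h)/a_{k,j}=1/a_{h,j}$ with exactly $t+1$ nonzero terms, controlled by Lemma~\ref{lem:aixi-Z}. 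The stated bound is the optimum of $\cI\sum_t\cJ_t\cK_t^{n-r}$, and the case split $2m\gtrless n+r$ arises from comparing the $t=1$, $t=2$ and $t\ge 3$ contributions. Your sketch omits this stratification entirely and gives no route to the target exponents. Similarly for $r=2$: the paper partitions matrices into row-good/row-bad and column-good/column-bad according to nonsingularity of certain $2\times 2$ minors in the first two rows and columns; these conditions are exactly what license the applications of Lemma~\ref{lem:aixi-Z n=2,3} with nonzero right-hand sides. The $(3,3)$ exception and its $H^{7+o(1)}$ bound rest on a separately proved estimate $\#\cD_3(\Z^{-1};H,0)\le H^{7+o(1)}$, which itself needs a case analysis by the number of nonvanishing Laplace-expansion coefficients. ``A careful divisor-counting estimate'' does not identify where these nonvanishing hypotheses come from, nor why the exponent is one larger than in the generic $r=2$ formula.
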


Our upper bounds for the case $r=1$ and the case $(m,r)=(3,2)$ and $n\ne 3$ match the lower bound~\eqref{eq:Lmnr 1/Z low}, up to the error factor $H^{o(1)}$.

Next we estimate 	$\#\cD_n\(\Z^{-1};H,\delta\)$, which is the main contribution of this section.

\begin{theorem}\label{thm:cdnE}
Uniformly over $\delta \in \Q$, we have:
	\begin{itemize}
		\item    If $\delta\ne 0$, then
		\[\#\cD_n\(\Z^{-1};H,\delta\)\le  \begin{cases}
			H^{o(1)},& \text{ if }n=2,\\
			H^{7+o(1)},& \text{ if }n=3,\\
			H^{n^2-n/2-1/(2n-2)+o(1)},& \text{ if }n\ge 4. 
		\end{cases}
		\]
		\item If $\delta=0$, then 
		\[
		\#\cD_n\(\Z^{-1};H,0\)\le \begin{cases}
			H^{2+o(1)},& \text{ if }n=2,\\
	 H^{7+o(1)},& \text{ if }n=3,\\
			H^{n^2-n/2+o(1)},& \text{ if }n\ge 4.
		\end{cases}
		\]
	\end{itemize}
\end{theorem}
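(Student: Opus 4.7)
The plan is to treat the cases $\delta=0$ and $\delta\ne 0$ separately, with sub-cases in each according to the size of $n$.

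For $\delta=0$ the matrix $A$ is singular, so $\rank A\le n-1$, and I would write
\[
\#\cD_n(\Z^{-1};H,0)\le \sum_{r=0}^{n-1}\#\cL_{n,n,r}(\Z^{-1};H)
\]
and apply Theorem~\ref{thm:rank 1/Z} with $m=n$. The dominant contribution comes from $r=n-1$: for $n\ge 4$ we are in the regime $2m\ge n+r$ and the exponent becomes $(n-r)(r+1)/2+nr=n/2+n(n-1)=n^2-n/2$, while the cases $n=2,\,r=1$ and $n=3,\,r=2$ are covered by the corresponding small-rank cases of Theorem~\ref{thm:rank 1/Z}, yielding $H^{2+o(1)}$ and $H^{7+o(1)}$ respectively.

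For $\delta\ne 0$ the matrix has rank $n$. Expanding the determinant along the last row $(1/a_{n,1},\ldots,1/a_{n,n})$ gives the constraint
\[
\sum_{j=1}^n(-1)^{n+j}M_{n,j}\frac{1}{a_{n,j}}=\delta,
\]
where the minors $M_{n,j}$ depend only on the first $n-1$ rows $A'$ and cannot all vanish (else $\det A=0$). For $n=2$ this reduces to the single Diophantine equation $v(a_{1,2}a_{2,1}-a_{1,1}a_{2,2})=u\,a_{1,1}a_{1,2}a_{2,1}a_{2,2}$ with $\delta=u/v$ in lowest terms; setting $x=a_{1,1}a_{2,2}$ and $y=a_{1,2}a_{2,1}$ one gets $y=vx/(v-ux)$, which for $\delta\ne 0$ forces $|y|$ and the denominator $|v-ux|$ into narrow ranges, after which the divisor bound $d(k)=k^{o(1)}$ delivers $H^{o(1)}$.

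For $\delta\ne 0$ and $n\ge 3$ I plan to sum over the first $n-1$ rows $A'$, and for each $A'$ of rank $n-1$ bound the number of valid last rows using the linear Egyptian-fraction estimates to be established in Section~\ref{eq:LinEq rat}. The naive estimate -- fix $n-1$ of the $a_{n,j}$ and read off the remaining one -- gives only $H^{n-1+o(1)}$ per $A'$ and therefore the too-weak global bound $H^{n^2-1+o(1)}$. To reach $H^{n^2-n/2-1/(2n-2)+o(1)}$ for $n\ge 4$ I intend to combine the per-fibre estimate with the rank-$(n-1)$ constraint on $A'$ through a H\"older-type inequality of order $2n-2$: the tuples $(M_{n,1},\ldots,M_{n,n})$ that admit many solutions to the linear equation should form a sparse set, and weighting the outer sum by an appropriate power of this solution count produces the desired $1/(2n-2)$ saving. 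For $n=3$ the same scheme, used in conjunction with the special $(m,n)=(3,3)$ case of Theorem~\ref{thm:rank 1/Z}, should already yield $H^{7+o(1)}$ without the fine balancing.

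The main obstacle is extracting the precise exponent $1/(2n-2)$; this hinges on a quantitative control of how the minor tuples of $(n-1)\times n$ matrices with entries in $\cE(H)$ are distributed -- specifically an upper bound on the number of $A'$ that share a given tuple -- so that the H\"older balancing can be set up with the optimal pair of exponents. A secondary technical point is the treatment of degenerate tuples in which some $M_{n,j}$ vanish; these correspond to $A'$ whose columns lie in a proper coordinate subspace and can be handled by an auxiliary rank stratification on the relevant column subset together with the smaller-rank cases of Theorem~\ref{thm:rank 1/Z}.
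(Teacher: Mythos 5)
Your $\delta=0$ argument is exactly the paper's: sum $\#\cL_{n,n,r}(\Z^{-1};H)$ over $r\le n-1$ using Theorem~\ref{thm:rank 1/Z}, with the $r=n-1$ term dominating. Your $n=2$, $\delta\neq 0$ sketch is also in the right spirit, though the paper's actual treatment uses a cleaner factorization, $(rx_2x_3+s)(rx_1x_4-s)=-s^2$, after which the divisor of $s^2$ controls both factors; your "narrow ranges for $|y|$ and $|v-ux|$" heuristic would need a similar divisibility step to actually close.

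Where the proposal goes wrong is the case $\delta\neq 0$, $n\ge 3$. You set up the Laplace expansion correctly, but then treat the per-fibre count (the number of admissible first/last rows for a fixed complementary submatrix) as a naive $H^{n-1+o(1)}$, and try to recover the loss by a H\"older inequality of order $2n-2$ applied to the outer sum over $A'$. That is not the source of the exponent $1/(2n-2)$, and I don't see how such a balancing could be made to work without additional structural input. The exponent comes entirely from the \emph{per-fibre} bound: the paper proves (Lemma~\ref{lem:aixi-Z-inhom}) that an inhomogeneous linear equation $\sum_{j=1}^n Q_j/x_j=Q_0$ with $Q_0\ne 0$ has at most $H^{n/2-1/(2n-2)+o(1)}$ solutions in $\cE(H)^n$ --- improving the homogeneous bound $H^{n/2+o(1)}$ of Lemma~\ref{lem:aixi-Z} by exactly the factor you are trying to conjure from the outer sum. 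Once you have this lemma, no H\"older is needed: you choose the bottom $n-1$ rows trivially in $H^{n(n-1)}$ ways, apply the lemma to the Laplace equation for the first row (this is the $k=n$ case where all cofactors $Q_j$ are nonzero), and multiply. The degenerate case $k<n$ (some $Q_j=0$) is handled not by a "rank stratification" on columns but by the observation that $Q_n=0$ is equivalent to $\det M_{1,n}=0$, so the bottom-right $(n-1)\times(n-1)$ block is counted by $\#\cD_{n-1}(\Z^{-1};H,0)$, setting up a recursion in $n$ that one resolves with the already-established $\delta=0$ bound. For $n=3$ the paper uses the three-variable bound $H^{1+o(1)}$ of Lemma~\ref{lem:aixi-Z n=2,3} rather than anything from Theorem~\ref{thm:rank 1/Z}: the latter counts rank-$2$ matrices, which is a different set from $\cD_3(\Z^{-1};H,\delta)$ with $\delta\ne 0$, so invoking it there would be a category error. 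In short, the missing idea is a sharper \emph{inhomogeneous} Egyptian-fraction lemma, not an averaging device over the outer sum.
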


On the other hand, we have a lower bound 
\begin{equation}
\label{eq:Triv DZ Sing}
\#\cD_n\(\Z^{-1};H,0\)\gg H^{n^2-n}
\end{equation}
by considering $n\times n$ matrices $A\in \cM_n(\Z^{-1};H)$,  whose first row is equal to the second row.  We note that this lower bound matches the bound of Theorem~\ref{thm:cdn} for the case $n=2$, up to the error factor in $H^{o(1)}$.

Next, we provide some upper bounds on $\#\cP_n\(\Z^{-1};H,f\)$.

\begin{theorem}\label{thm:pnhfE} We have
\begin{itemize}
\item If  $n=2$ and $f(X)=X^2$,   
\[ 
		\#\cP_2\(\Z^{-1};H,X^2\)=\frac{24}{\pi^2}H\log H + O(H).
\]
\item Uniformly over $f\in \Q[X]$, 
	\[
	\#\cP_n\(\Z^{-1};H,f\) \ll\begin{cases}
		H^{o(1)}, &\text{ if $n=2$ and  $f(X)\ne X^2$},\\
		H^{3+o(1)}, & \text{ if $n=3$},\\
		H^{n^2/2-1/(2n-2)+o(1)}, & \text{ if $n\ge 4$ and $f_{n-1}\ne 0$},\\
		H^{n^2/2+o(1)}, & \text{ if $n\ge 4$ and $f_{n-1}= 0$},
	\end{cases}
	\]
where $f_{n-1}$ is the coefficient of $X^{n-1}$ in $f$.
\end{itemize}
\end{theorem}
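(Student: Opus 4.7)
The plan is to address the four regimes of the statement separately, in increasing order of difficulty.

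\emph{The case $n=2$, $f=X^2$.} A $2\times 2$ matrix with characteristic polynomial $X^2$ is nilpotent, so the trace and the determinant both vanish. These conditions force $a_{22}=-a_{11}$ and $a_{12}a_{21}=-a_{11}^2$, so the count reduces to that of triples $(a,v,w)$ of nonzero integers in $[-H,H]$ with $vw=-a^2$. Substituting $d=|v|$ and $e=|w|$ converts this into a constant multiple of the number of pairs $(d,e)\in[1,H]^2$ with $de$ a perfect square. This last quantity can be evaluated by the parametrisation $d=\alpha s^2$, $e=\alpha t^2$ with $\alpha$ squarefree, and using $\sum_{n\ge 1}\mu^2(n)n^{-s}=\zeta(s)/\zeta(2s)$ together with a standard Tauberian argument produces both the leading term and the stated error.

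\emph{The case $n=2$, $f\ne X^2$.} Writing $f(X)=X^2+bX+c$, I would split on whether $b=0$. If $b\ne 0$, setting $-b=u/v$ in lowest terms allows the trace equation $1/a_{11}+1/a_{22}=-b$ to be factored as $(ua_{11}+v)(ua_{22}+v)=v^2$, leaving only $O_b(1)$ diagonal pairs; the determinant then pins down $a_{12}a_{21}$, and the divisor bound produces $H^{o(1)}$ completions. If $b=0$ but $c\ne 0$, then $a_{22}=-a_{11}$ and, writing $c=p/q$ in lowest terms, rearranging the determinant equation yields the divisibility $(pa_{11}^2+q)\mid q^2$; this forces $|a_{11}|=O_c(1)$, and the divisor bound again produces $H^{o(1)}$ completions.

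\emph{The case $n=3$.} Here I plan to exploit all three coefficients of $f$ simultaneously, together with the multiplicative structure of the entries $1/a_{ij}$. After introducing the off-diagonal products $p_{ij}=a_{ij}a_{ji}$ for $i<j$ and the two cyclic products $X=a_{12}a_{23}a_{31}$ and $Y=a_{13}a_{21}a_{32}$, the three coefficients of $f$ translate into an Egyptian-fraction equation in the diagonal, a linear equation in $1/p_{12},1/p_{13},1/p_{23}$, and a polynomial equation involving the $p_{ij}$ and the pair $(X,Y)$. Since $XY=p_{12}p_{13}p_{23}$ holds identically, once the diagonal and the $p_{ij}$ are fixed the pair $(X,Y)$ is a root of a quadratic. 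Careful combination of the linear-equation counts from Section~\ref{eq:LinEq rat} with the divisor bound applied to each factorisation $a_{ij}a_{ji}=p_{ij}$ and $a_{12}a_{23}a_{31}=X$ should give $H^{2+o(1)}$; this combinatorial bookkeeping is the most delicate step in the small-$n$ analysis.

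\emph{The case $n\ge 4$.} Here I would combine Theorem~\ref{thm:cdnE}, which bounds $\#\cD_n(\Z^{-1};H,\delta)$ for $\delta=(-1)^n f(0)$, with the extra constraint $\Tr(A)=-f_{n-1}$ imposed by $f$. Concretely, I would first fix a diagonal $(a_{11},\dots,a_{nn})$ satisfying the trace equation and then bound the number of off-diagonal completions with $\det A=\delta$ for each such diagonal. The number of admissible diagonals comes from the Egyptian-fraction counts in Section~\ref{eq:LinEq rat} and is noticeably smaller for a non-zero trace, which accounts for the two regimes in the statement. The main obstacle is that Theorem~\ref{thm:cdnE} cannot be invoked as a black box once the diagonal is frozen, since some variables are no longer free; I expect to revisit its proof and replay it with the diagonal treated as a parameter, tracking how the exponents change. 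Balancing the two counts should produce the exponent $n^2/2$, with the refinement $-1/(2n-2)$ in the non-vanishing trace case propagating directly from the sharper bound on trace-$\tau$ diagonals for $\tau\ne 0$.
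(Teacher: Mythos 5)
Your $n=2$ cases are essentially sound: for $f=X^2$ you reduce to pairs with square product as the paper does (the paper invokes Lemma~\ref{lem:square} rather than re-deriving the Dirichlet-series asymptotic, but that is a stylistic choice); for $f\ne X^2$ your case split on the trace and constant coefficients is a minor rearrangement of the paper's split on $\alpha$ and $\beta$, and the divisor-bound arguments are correct — though you should replace "$O_b(1)$" and "$O_c(1)$" with $H^{o(1)}$, since uniformity in $f$ is required and the factorisations $(ua_{11}+v)(ua_{22}+v)=v^2$ and $(pa_{11}^2+q)\mid q^2$ yield $\tau(v^2), \tau(q^2)\le H^{o(1)}$ only after observing that $|v|,|q|\le H^{O(1)}$.

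The real gap is in the case $n\ge 4$, where your plan is unworkable. You propose to combine the trace constraint (Egyptian-fraction count on the diagonal, giving $H^{n/2+o(1)}$ or $H^{n/2-1/(2n-2)+o(1)}$ diagonals) with the single determinant constraint $\det A=\delta$ on the $n^2-n$ off-diagonal entries. A Laplace-expansion argument of the kind used for Theorem~\ref{thm:cdnE}, replayed with the diagonal frozen, can save at most roughly a factor $H^{(n-1)/2}$ over the trivial $H^{(n-1)^2+(n-1)}$ count of off-diagonal completions, leaving about $H^{(n-1)^2+(n-1)/2}$. Multiplying by the diagonal count gives at best $H^{n/2+(n-1)^2+(n-1)/2+o(1)}$, which for $n=4$ is $H^{12.5+o(1)}$ — nowhere near the claimed $H^{8+o(1)}$. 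The single equation $\det A=\delta$ simply cannot constrain $n^2-n$ variables enough. The idea you are missing is to use $\Tr A^2$ instead of $\det A$: via Newton's identities the coefficient $f_{n-2}$ fixes $\Tr A^2$, and the off-diagonal part of $\Tr A^2$ is a \emph{linear} Egyptian-fraction equation $\sum_{i<j} s_2/A_{i,j}=r_2$ in the $\binom{n}{2}$ products $A_{i,j}=a_{i,j}a_{j,i}\in\cE(H^2)$. Lemma~\ref{lem:aixi-Z} then yields $(H^2)^{\binom{n}{2}/2+o(1)}=H^{n(n-1)/2+o(1)}$ solutions for the products, and the divisor bound recovers the individual off-diagonal entries in $H^{o(1)}$ ways. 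That is the mechanism that converts the trivial $H^{n^2-n}$ off-diagonal count into $H^{(n^2-n)/2}$, and together with the diagonal count produces $H^{n^2/2+o(1)}$ (with the $-1/(2n-2)$ refinement from Lemma~\ref{lem:aixi-Z-inhom} when $f_{n-1}\ne 0$). Your $n=3$ sketch actually gropes in this direction by introducing the products $p_{ij}$, but it is too vague to certify; for what it is worth, the paper's own $n=3$ argument (trace gives $H^{1+o(1)}$, the $\Tr A^2$ equation in $\cE(H^2)^3$ gives $H^{2+o(1)}$, divisor bound $H^{o(1)}$) in fact produces $H^{3+o(1)}$, which does not match the $H^{2+o(1)}$ announced in the theorem statement, so you would need to genuinely exploit the determinant constraint to close that gap as well, and neither you nor the printed proof currently does so.
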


 \subsection{On the tightness of the bounds} 
 \label{sec:Tight}    
  We first discuss the tightness of the upper bound on $\#\cL_{m,n,r}(\Q;H)$ and $\#\cL_{m,n,r}(\Z^{-1};H)$. For both these quantities, for 
 $(n+r)/2\ge m>r$ by~\eqref{eq:Lmnr Q low}, \eqref{eq:Lmnr 1/Z low} 
 and Theorems~\ref{thm:rank Q} and~\ref{thm:rank 1/Z},   we have rather close upper and lower bounds 
 \[
 H^{2nr}\ll \#\cL_{m,n,r}(\Q;H) \le H^{2nr+2(m-r)+o(1)},
 \] 
 and  
  \[ 
 H^{nr}\ll \#\cL_{m,n,r}(\Z^{-1};H) \le H^{nr+m-r+o(1)}.
  \] 
 Furthermore, we have matching bounds on  $\#\cL_{m,n,r}(\Z^{-1};H) $  for several nontrivial triples $(m,n,r)$,  see  the comment after Theorem~\ref{thm:rank 1/Z}.

We remark that after clearing the denominators, the equation
 \[\det A=\delta, \qquad A\in \cM_{n,n}\(\Q;H\), \]  
leads to a homogeneous Diophantine equation of degree 
$n^2$ in $2n^2$ variables, which suggests the bound $\#\cD_n(\Q;H, \delta)\le H^{n^2 +o(1)}$
which matches~\eqref{eq:Triv DQ Non-Sing} for $\delta \ne 0$. On the other hand, the lower 
bound~\eqref{eq:Triv DQ Sing} shows that this conjecture is false for $\delta = 0$ in which case
the lower bound~\eqref{eq:Triv DQ Non-Sing} can be tight. A similar argument also suggests that 
  \[\#\cD_n(\Z^{-1} ;H, \delta)\le H^{n^2 -n +o(1)}, \]   
 which may actually be tight, see~\eqref{eq:Triv DZ Sing}.

Furthermore, based on the fact that the upper bound on $\#\cP_n\(\Q;H,f\)$ for $n \ge 3$ is based 
 on examining of only two coefficients of $f$, makes us believe that the lower bound~\eqref{eq:Triv PQ} is
 more precise and may in fact give the true order of $\#\cP_n\(\Q;H,f\)$. However we do not have any feasible 
 conjecture about the size of $\#\cP_n\(\Z^{-1};H,f\)$.

\section{Preliminary results}

 \subsection{Bounds on some arithmetic functions} 
 
 We use a well-known result concerning the number of $k$-fold divisors 
\[
 \tau_k(m) = \sum_{\substack{m_1, \ldots, m_k \in \N\\ m_1 \ldots m_k=m}} 1
 \]
  of a positive integer $m$, see, for example,~\cite[Equation~(1.81)]{IK}. In most of our applications in this paper, we use $k=2$. In this case, we sometimes drop the corresponding index; thus, $\tau_2=\tau$.

\begin{lemma}\label{lem:div} 
For all $k\ge 2$, we have 
\[
\tau_k (m)=m^{o(1)}
\] 
 as $m\to \infty$. \end{lemma}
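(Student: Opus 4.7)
The plan is to derive the slightly stronger uniform bound $\tau_k(m) \leq C(k,\varepsilon)\, m^\varepsilon$ for every fixed $\varepsilon > 0$, from which the stated conclusion $\tau_k(m) = m^{o(1)}$ is immediate from the definition of the $o(1)$ notation given in the paper. The entire argument rests on the multiplicativity of $\tau_k$ together with an elementary polynomial-versus-exponential growth comparison on prime powers.

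First I would verify that $\tau_k$ is multiplicative: if $\gcd(u,v) = 1$, any ordered factorization $uv = m_1 \cdots m_k$ splits uniquely into a pair of ordered factorizations of $u$ and of $v$ (via the $u$-part and $v$-part of each $m_i$), so $\tau_k(uv) = \tau_k(u)\tau_k(v)$. On a prime power, $\tau_k(p^a)$ counts ordered $k$-tuples $(b_1, \ldots, b_k)$ of non-negative integers with $b_1 + \cdots + b_k = a$, so by stars and bars
\[
\tau_k(p^a) = \binom{a+k-1}{k-1}.
\]

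Now fix $\varepsilon > 0$ and write $m = \prod_{p \mid m} p^{a_p}$. By multiplicativity,
\[
\frac{\tau_k(m)}{m^\varepsilon} = \prod_{p \mid m} \frac{\binom{a_p+k-1}{k-1}}{p^{\varepsilon a_p}}.
\]
For each prime $p$, the function $a \mapsto \binom{a+k-1}{k-1}/p^{\varepsilon a}$ is a polynomial in $a$ of degree $k-1$ divided by an exponential in $a$, and so attains a finite maximum $C_p$ on the non-negative integers. Moreover, once $p$ exceeds some threshold $p_0 = p_0(k,\varepsilon)$, this maximum is at most $1$: at $a = 1$ one needs only $p^\varepsilon \geq k$, while for $a \geq 2$ the inequality $\binom{a+k-1}{k-1} \leq p^{\varepsilon a}$ follows once $p^\varepsilon$ dominates the fixed-degree polynomial in $a$ by exponential growth. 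Hence only the finitely many primes $p < p_0(k,\varepsilon)$ contribute a factor exceeding $1$, and their product $C(k,\varepsilon) = \prod_{p < p_0} C_p$ is a finite constant depending only on $k$ and $\varepsilon$. This gives $\tau_k(m) \leq C(k,\varepsilon)\, m^\varepsilon$ for all $m \geq 1$, and since $\varepsilon$ was arbitrary, $\tau_k(m) = m^{o(1)}$.

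There is no genuine obstacle in this argument; the only mild care is in pinning down the large-prime threshold, which amounts to the routine assertion that for any fixed $k$ and $\varepsilon > 0$ the inequality $(a+k-1)^{k-1} \leq p^{\varepsilon a}$ holds uniformly in $a \geq 1$ once $p$ is sufficiently large, and could be made quantitative via the explicit estimate $\binom{a+k-1}{k-1} \leq (a+k)^{k-1}$ if desired.
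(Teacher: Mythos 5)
Your argument is correct. Note that the paper does not prove this lemma at all: it is stated as a well-known estimate with a pointer to~\cite[Equation~(1.81)]{IK}, so your self-contained proof is necessarily a different route, but it is precisely the standard argument underlying that citation — multiplicativity of $\tau_k$, the stars-and-bars evaluation $\tau_k(p^a)=\binom{a+k-1}{k-1}$, and splitting the product over $p\mid m$ at a threshold $p_0(k,\varepsilon)$ so that primes $p\ge p_0$ contribute factors at most $1$ while the finitely many small primes contribute a bounded constant $C(k,\varepsilon)$. The one point requiring care, the uniformity in $a$ of $\binom{a+k-1}{k-1}\le p^{\varepsilon a}$ for all large $p$, is handled correctly: since $\binom{a+k-1}{k-1}\le (a+k)^{k-1}$ and $(k-1)\log(a+k)/a$ is bounded over $a\ge 1$, it suffices to take $p^{\varepsilon}$ larger than a constant depending only on $k$. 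What your write-up buys is an explicit, self-contained bound $\tau_k(m)\le C(k,\varepsilon)\,m^{\varepsilon}$ needing no external reference; what the paper's citation buys is brevity, the statement being classical.
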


Furthermore, for an integer $Q\ne 0$ and a real $U\ge 1$, let $F(Q,U)$  be the number 
of positive integers $u \le U$, whose prime divisors are amongst those of $Q$.

As in~\cite[Section~3.1]{PSSS}, in particular, see the derivation of~\cite[Equation~(3.10)]{PSSS},  using a result of  de Bruijn~\cite[Theorem~1]{dBr}, one immediately derives 
the following result, see also~\cite[Lemma~3.4]{HOS} for a self-contained proof.

\begin{lemma}
\label{lem: Sunits}
For any integer $Q\ne 0$ and   real $U\ge 1$, we have 
\[
F(Q,U) = \(Q U\)^{o(1)} .
\]
\end{lemma}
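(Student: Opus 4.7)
The plan is to reduce the problem to a standard estimate on the count of smooth integers and to invoke de Bruijn's theorem. Let $y$ denote the largest prime divisor of $Q$, so that $y\le Q$. Any integer $u$ counted by $F(Q,U)$ has all its prime factors dividing $Q$; in particular, $u$ is $y$-smooth. Writing $\Psi(x,y)$ for the number of $y$-smooth positive integers at most $x$, we therefore obtain
\[
F(Q,U)\le \Psi(U,y),
\]
and it remains to show $\Psi(U,y)=(QU)^{o(1)}$ as $QU\to\infty$.

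Next I would invoke de Bruijn's theorem~\cite[Theorem~1]{dBr}, which for $u_0=\log U/\log y$ gives
\[
\Psi(U,y)=U\,\rho(u_0)^{1+o(1)},
\]
where $\rho$ is the Dickman function, together with its classical decay $\rho(u)=u^{-u(1+o(1))}$ as $u\to\infty$. The argument then splits naturally into regimes according to the relative size of $y$ and $U$. When $y\le U^{o(1)}$, we have $u_0\to\infty$, so the decay of $\rho$ immediately yields $\Psi(U,y)\le U^{o(1)}\le (QU)^{o(1)}$. When $y$ is comparable to $U$ (so $u_0=O(1)$ and $\Psi(U,y)$ can be of order $U$), one instead uses that $\log Q\ge \log y$ is then of order $\log U$, so that $\log(QU)\gg\log U$; combining the trivial bound $F(Q,U)\le U$ with the quantitative form of de Bruijn's estimate still forces $\log F(Q,U)=o(\log(QU))$.

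The main obstacle will be the intermediate regime where $u_0$ is moderately large and $\log y$ is not negligible compared to $\log U$. In this range the naive exponent-counting bound
\[
F(Q,U)\le \prod_{p\mid Q}\left(1+\frac{\log U}{\log p}\right),
\]
combined only with the standard estimate $\omega(Q)\le (1+o(1))\log Q/\log\log Q$, is not sharp enough to yield $\log F(Q,U)/\log(QU)\to 0$ uniformly; one has to exploit the finer Dickman-type decay coming from de Bruijn's theorem, together with the lower bound $\log Q\ge \log y$, in order to control the trade-off between the smoothness exponent $u_0$ and the size of $Q$. The required computation closely follows the one carried out in~\cite[Section~3.1]{PSSS} leading to their Equation~(3.10), and a self-contained argument of the same flavour is given in~\cite[Lemma~3.4]{OS}.
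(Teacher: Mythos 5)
Your opening reduction gives the game away, and the statement you reduce to is false. Bounding $F(Q,U)\le\Psi(U,y)$ with $y$ the largest prime factor of $Q$ retains only the size of the largest admissible prime and forgets that the integers being counted are built from at most $\omega(Q)\ll\log Q/\log\log Q$ \emph{prescribed} primes. Concretely, take $Q=p$ a single prime of size about $\exp\bigl(\sqrt{\log U}\bigr)$. Then $QU\le U^{2}$, so your target reads $\Psi(U,y)\le U^{o(1)}$; but with $u_{0}=\log U/\log y=\sqrt{\log U}$ de Bruijn's theorem gives $\Psi(U,y)=U\rho(u_{0})^{1+o(1)}=U\exp\bigl(-O(\sqrt{\log U}\,\log\log U)\bigr)=U^{1-o(1)}$, whereas the true count is $F(p,U)=1+\lfloor\sqrt{\log U}\rfloor$. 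This example also defeats your plan for the ``intermediate regime'': the Dickman decay and the inequality $\log Q\ge\log y$ are exactly the inputs available there, and they do not suffice. Two of your other regime claims are incorrect as stated as well: $y\le U^{o(1)}$ does not imply $\Psi(U,y)\le U^{o(1)}$ (that needs $y\le(\log U)^{1+o(1)}$, since $\Psi\bigl(U,(\log U)^{A}\bigr)=U^{1-1/A+o(1)}$, and the example above has $y=U^{o(1)}$), and in the regime $y\asymp U$ the inference ``$\log(QU)\gg\log U$, hence the trivial bound $F\le U$ forces $\log F=o(\log(QU))$'' is a non sequitur: enlarging $\log(QU)$ by a constant factor never turns $\log F\le\log U$ into $o(\log(QU))$.

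The paper itself only cites the proof (\cite[Section~3.1]{PSSS}, with a self-contained version in \cite[Lemma~3.4]{OS}), but those arguments never discard the condition $p\mid Q$: roughly, one splits $u$ into its part built from small primes, controlled by smooth-number estimates, and its part built from the large primes dividing $Q$, of which there are at most $\log Q/\log z$, controlled by counting exponent vectors for a suitable threshold $z$; your sketch reproduces only the first half. Alternatively, Rankin's trick settles the lemma directly and uniformly: for any $\varepsilon>0$ one has $F(Q,U)\le U^{\varepsilon}\prod_{p\mid Q}(1-p^{-\varepsilon})^{-1}$, and since $\operatorname{rad}(Q)\le |Q|$ the product is maximised when the primes dividing $Q$ form an initial segment of the primes, with largest element $O(\log Q)$, giving $\prod_{p\mid Q}(1-p^{-\varepsilon})^{-1}\le\exp\bigl(O_{\varepsilon}(1)+O\bigl((\log Q)^{1-\varepsilon}\bigr)\bigr)=Q^{o(1)}$; letting $\varepsilon\to0$ yields $F(Q,U)=(QU)^{o(1)}$. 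As written, your proposal does not prove the lemma.
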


 \subsection{Squares in arithmetic progressions} 
 For integers $a$, $q$ and $K$, let $Q(a,q,K)$ denote the number of perfect integer
 squares in the progression $m^2 =   a + kq$, 
 with  a positive integer $k \le K$.
 
The classical conjecture of  Rudin~\cite{Rud} asserts that $Q(a,q,K) \ll K^{1/2}$ 
uniformly over $a$ and $q$. The conjecture is still open,  see also~\cite{BGP,BoZa}
for the state of art, however, it has been shown in~\cite[Section~I]{BGP} that it holds (up to 
a logarithmic factor)
uniformly over $q \le \exp\(\sqrt{K}\)$.

\begin{lemma}\label{lem:Rudin}
Uniformly over $1 \le q \le \exp\( \sqrt{K}\)$ and over any integer $a$,  we have 
\[
Q(a,q,K)  \ll K^{1/2} \log K. 
\]
\end{lemma} 
 
 \subsection{Counting solutions to some equations and congruences} 
 \label{eq:LinEq rat} 
To derive an upper bound on $\#\cD_n(\fA;H,\delta)$, where $\fA \in \{\Q, \Z^{-1}\}$, we first recall  a result of Shparlinski~\cite[Theorem~1.1]{Shp2} that concerns the number of rational solutions of a linear equation over $\Q$ of bounded height.

\begin{lemma}\label{lem:aixi-F}
Let $H_1,\ldots,H_n\ge 1$  and let $(Q_0,Q_1,\ldots, Q_n)\in \Z^{n+1}$ 
with $1\le |Q_i|\le \exp(H^{o(1)})$, $i=1,\ldots,n$, where  $H = \max_{i=1}^n H_i$.  
Then, the equation 
\[
    \sum_{i=1}^n Q_ix_i = Q_0,
\]
has at most $H_1\cdots  H_n(\log H)^{2^n-1+o(1)}$ solutions with $x_i \in \cF(H_i)$, $i=1,\ldots,n$.  
\end{lemma}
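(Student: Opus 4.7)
The plan is to prove the bound by induction on $n$. In the base case $n=1$, the equation $Q_1 x_1 = Q_0$ has at most one rational solution, so the claimed bound holds trivially.

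For the inductive step, I would write each variable in lowest terms as $x_i = a_i/b_i$ with $|a_i| \le H_i$, $1 \le b_i \le H_i$, and $\gcd(a_i,b_i)=1$. Multiplying through by $b_n$ transforms the equation into
\[
\sum_{i=1}^{n-1} (Q_i b_n)\, x_i \;=\; Q_0 b_n - Q_n a_n,
\]
a linear equation in the $n-1$ rational variables $x_1,\dots,x_{n-1}$. Its coefficients still satisfy $|Q_i b_n| \le \exp(H^{o(1)})$, so the inductive hypothesis applies: for each fixed $x_n = a_n/b_n$, there are at most $H_1\cdots H_{n-1} (\log H)^{2^{n-1}-1+o(1)}$ solutions.

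Since naively summing over the $\#\cF(H_n) \asymp H_n^2$ choices of $x_n$ would overshoot by a factor of roughly $H_n$, the key must be to save this factor by exploiting the joint Diophantine structure. After fixing all the denominators $b_1,\dots,b_n$ (contributing $\prod_i H_i$), the cleared equation
\[
\sum_{i=1}^n Q_i\, a_i \prod_{j\ne i} b_j \;=\; Q_0 \prod_{j=1}^n b_j
\]
pins the tuple $(a_1,\dots,a_n)$ to a rank-$(n-1)$ affine lattice, and the numerator bounds $|a_i|\le H_i$ together with the coprimality conditions $\gcd(a_i,b_i)=1$ (removable by M\"obius inversion) restrict the lattice points to a thin set. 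Divisor bounds such as Lemma~\ref{lem:div} then absorb the arithmetic cost of the various gcds into an $H^{o(1)}$ factor, while iterating the induction between numerators and denominators gains a factor of $(\log H)^{2^{n-1}}$.

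The main obstacle will be producing precisely the recurrence $E_n = 2 E_{n-1} + 1$ for the $\log$-exponent, which is what gives the final power $2^n - 1$. I expect the doubling of the exponent to come from a Cauchy--Schwarz step that exploits the symmetric role of the variables in the linear equation, while the extra $+1$ should come from a divisor-sum bound applied once per step. The delicate point is to verify that the boundary contributions (very small denominators $b_n$, degenerate values of $a_n$, or near-vanishing reduced coefficients) can all be absorbed into the $H^{o(1)}$ factor without corrupting the main term, since the target bound is nearly sharp against the trivial count $\prod_i H_i^2$ only by a single factor of $\prod_i H_i$.
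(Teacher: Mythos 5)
The paper does not actually prove this lemma; it is quoted verbatim as Theorem~1.1 of Shparlinski~\cite{Shp2} (``Linear equations with rational fractions of bounded height and stochastic matrices''), so there is no internal proof to compare against.

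As for your proposal: there is a genuine gap, and you have in fact flagged it yourself. The inductive step as set up --- fix $x_n=a_n/b_n$, apply the hypothesis to the residual equation in $x_1,\dots,x_{n-1}$, sum over $x_n$ --- gives at best $H_n^2\cdot H_1\cdots H_{n-1}(\log H)^{2^{n-1}-1+o(1)}$, which overshoots the target by a full factor of $H_n$. (In fact it is no better than the trivial bound obtained by noting that $x_1,\dots,x_{n-1}$ determine $x_n$, so the induction as structured buys nothing.) Everything that is supposed to recover that lost factor of $H_n$ --- the ``rank-$(n-1)$ affine lattice'' reduction, the M\"obius removal of coprimality, the divisor-sum absorption, the Cauchy--Schwarz step that supposedly doubles the $\log$-exponent --- is stated only as what you ``expect'' should happen, not as an argument. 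None of these steps is carried out, the recurrence $E_n = 2E_{n-1}+1$ for the $\log$-power is asserted but not derived, and the boundary analysis you single out as ``delicate'' (small denominators, degenerate reduced coefficients, $Q_0 b_n - Q_n a_n = 0$) is exactly where such an argument would live or die. As written this is an honest outline with the central estimate missing, not a proof; to check it one would have to consult~\cite{Shp2} directly, where the saving and the exponent $2^n-1$ are obtained by a different, fully worked-out mechanism.
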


In particular, if $H_1=\cdots=H_n=H$, then the equation  of Lemma~\ref{lem:aixi-F} 
has at most $H^{n+o(1)}$ solutions $(x_1,\ldots x_n) \in \cF(H)^n$.

We note that de la Bret{\`e}che~\cite{dlBr} has obtained more precise results.
However, the results are not uniform with respect to the coefficients $Q_0,Q_1,\ldots, Q_n$, 
which is crucial for our applications. 

We obtain a similar result for linear equations in Egyptian fractions, which can be of independent interest.
\begin{lemma}\label{lem:aixi-Z} 
Let $(Q_0,Q_1,\ldots, Q_n)\in \Z^{n+1}$ with $1\le |Q_i|\le H^{O(1)}$ for  $i=1,\ldots,n$. 
Then, the equation 
 \[
\sum_{i=1}^n Q_i/x_i = Q_0,
\] 
has at most $H^{n/2+o(1)}$ solutions $(1/x_1,\ldots, 1/x_n) \in \cE(H)^n$. 
\end{lemma}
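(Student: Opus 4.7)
The plan is to prove the bound by induction on $n$, with the base case $n=2$ providing the key pair factorization.

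For $n=2$, clear denominators in $Q_1/x_1+Q_2/x_2=Q_0$ and write $Q_0=p/q$ in lowest terms (with $p=0$ if $Q_0=0$). When $p\ne 0$, completing the product gives
\[
(p x_1 - q Q_1)(p x_2 - q Q_2) = q^2 Q_1 Q_2.
\]
Since $q$ divides $x_1 x_2$, we have $|q|\le H^2$, so the right-hand side has magnitude $H^{O(1)}$ and Lemma~\ref{lem:div} bounds the number of divisor factorizations by $H^{o(1)}$. When $p=0$, the equation reduces to $Q_1 x_2=-Q_2 x_1$, fixing the ratio $x_1/x_2$ and giving $O(H)$ solutions. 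Either way the count is at most $H^{1+o(1)}=H^{n/2+o(1)}$.

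For the inductive step $n\ge 3$, let $N(n,Q_0)$ denote the number of solutions. My plan is to peel off the last variable and apply Cauchy--Schwarz:
\[
N(n,Q_0)^2 \;\le\; 2H \sum_{x_n\in\cE(H)} N(n-1,Q_0-Q_n/x_n)^2 \;\le\; 2H \cdot E,
\]
where
\[
E \;=\; \#\Bigl\{(\vec x,\vec x')\in\cE(H)^{2(n-1)}: \sum_{i=1}^{n-1}Q_i\bigl(1/x_i-1/x_i'\bigr)=0\Bigr\}
\]
is the weighted $(n-1)$-fold additive energy of $\cE(H)$ (the extra variable $x_n$ is uniquely determined by $(\vec x,\vec x')$ up to integrality). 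The plan to bound $E\le H^{n-1+o(1)}$ is to group the $2(n-1)$ variables into $n-1$ difference pairs $(x_i,x_i')$ and stratify by the set $J=\{i:x_i=x_i'\}$ of diagonal indices: the full-diagonal stratum $J=[n-1]$ contributes exactly $(2H)^{n-1}$, while for $|J|<n-1$ the base-case factorization applied pair-by-pair gives $H^{o(1)}$ realizations of each prescribed nonzero difference $t_i=Q_i(1/x_i-1/x_i')$, and the compatibility $\sum_{i\notin J}t_i=0$ is handled by an auxiliary induction on the residual homogeneous equation in the non-diagonal coordinates.

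The main obstacle is that Cauchy--Schwarz inflates the number of variables from $n$ to $2n-2$, so a single induction on $n$ does not close. My plan to resolve this is to run a joint induction on both the counting function $N(n,\cdot)$ and the energy $E$, using the pair factorization to reduce off-diagonal strata to counting problems on strictly fewer variables, while the diagonal stratum matches the target $(2H)^{n-1}=H^{n-1+o(1)}$ exactly. Combining these ingredients should deliver the sharp bound $H^{n/2+o(1)}$.
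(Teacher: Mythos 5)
Your approach is genuinely different from the paper's, which uses a divisibility argument in the style of Karatsuba: after multiplying through by $X=x_1\cdots x_n$ one shows $L^2\mid QX$ with $L=\lcm[x_1,\ldots,x_n]$ and $Q=Q_1\cdots Q_n$, then factors $L=dK$ with $\gcd(K,Q)=1$ so that $K^2\mid X$ forces $K\le H^{n/2}$, and concludes by the divisor bound since each $x_i\mid L$. That proof is uniform in $n$ and involves no Cauchy--Schwarz or induction. Your base case $n=2$ (via the factorization $(Q_0 x_1-Q_1)(Q_0 x_2-Q_2)=Q_1Q_2$ when $Q_0\ne0$, noting $Q_0$ is an integer so there is no $q$ to worry about) is correct and is essentially the Browning--Elsholtz-type argument used elsewhere in the paper.

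The inductive step, however, has a genuine gap that I do not think the outlined stratification repairs. After Cauchy--Schwarz you must bound the energy $E$, a \emph{homogeneous} equation in $2(n-1)$ variables, and $2(n-1)>n$ for $n\ge 3$, so this does not reduce to a smaller instance of the lemma. The stratification by the diagonal set $J$ does not fix this: the extremal stratum $J=\emptyset$ \emph{is} the full homogeneous $2(n-1)$-variable problem (with the mild side condition $x_i\ne x_i'$), and nothing in the sketch shows why it obeys $H^{n-1+o(1)}$. For a stratum with $k=n-1-|J|$ non-diagonal pairs, the ``base-case factorization applied pair-by-pair'' gives, for each fixed nonzero $t_i$, only $H^{o(1)}$ realizations; but the number of admissible tuples $(t_i)_{i\notin J}$ with $\sum t_i=0$ is the crux, and the naive count (fix $k-1$ pairs freely in $O(H^{2(k-1)})$ ways, then $t_k$ is forced and the last pair has $H^{o(1)}$ choices) yields $H^{2k-2+o(1)}$, which exceeds the needed $H^{k+o(1)}$ as soon as $k\ge 3$. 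The missing ingredient is an argument that most of those $O(H^{2(k-1)})$ choices produce a $t_k$ whose reduced denominator exceeds $H^2$ and is therefore unrealizable; this denominator bookkeeping is exactly where the real content lies, and it is neither stated nor sketched. The phrase ``auxiliary induction on the residual homogeneous equation in the non-diagonal coordinates'' does not resolve the issue either, because those residuals are again equations in up to $2(n-1)$ Egyptian-fraction variables, so no well-founded decreasing quantity has been identified. As written, the joint induction does not close, and the proposal does not constitute a proof.
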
 

\begin{proof} We use some ideas from~\cite{Kar}. Namely, writing the above equation as
\begin{equation}
\label{eq:Lin eq X/xi}
\sum_{i=1}^n Q_i X/x_i = Q_0 X,
\end{equation}
where $X = x_1 \ldots x_n$, we see that for every  $i,j=1,\ldots , n$ we have
\begin{equation}
\label{eq:xj X/xi ori}
 x_j \mid Q_i X/x_i  .
\end{equation}
Indeed, the above divisibility is obvious for $i \ne j$, but then the equation~\eqref{eq:Lin eq X/xi}
also implies it for $i=j$. 

Let 
\[
L = \lcm[x_1 \ldots x_n] \mand 
Q = Q_1\ldots Q_n.
\]
We see from~\eqref{eq:xj X/xi ori} that for every  $i=1, \ldots, n$ we have 
\[
Lx_i \mid Q X,
\]
which in turn implies that 
\[
L^2 \mid Q X  .
\]
Now, we choose some $d \le H^n$, which is factored into primes which are prime divisors of $Q$, and count the number
of solutions $(1/x_1,\ldots 1/x_n) \in \cE(H)^n$ to~\eqref{eq:Lin eq X/xi} with 
$L = d K$ where $\gcd(K, Q)=1$. Then $K^2 \mid X$ and thus $K \le H^{n/2}$. Since
$x_i \mid L = dK$,   $i=1, \ldots, n$  by Lemma~\ref{lem:div}, we see that for each $K \le H^{n/2}$
and each $d$ we have $H^{o(1)}$ choices for $(x_1, \ldots, x_n)$. 

Since by  Lemma~\ref{lem: Sunits} we have $(HQ)^{o(1)} = H^{o(1)}$ choices for $d$, the result now 
follows. 
\end{proof}

\begin{rem}
\label{rem:tightness}  
If $n$ is even, the upper bound in 
Lemmas~\ref{lem:aixi-F} and~\ref{lem:aixi-Z} is tight (up to the error factor $H^{o(1)}$) as the choice of 	
$Q_0=0$ and $Q_{2r-1}=-Q_{2r}$ for $1\le r \le n/2$, shows.  However,  for odd $n$ a similar construction only produces equations with at least $H^{(n-1)/2}$ solutions in Lemma~\ref{lem:aixi-Z}. 
\end{rem}

 We now improve Lemma~\ref{lem:aixi-Z} for $n = 2$ and $n = 3$.

\begin{lemma}\label{lem:aixi-Z n=2,3} 
Let $(Q_0,Q_1,\ldots, Q_n)\in \Z^{n+1}$ with $1\le |Q_i|\le H^{O(1)}$,  $i=1,\ldots,n$. 
Then, the equation 
 \[
\sum_{i=1}^n Q_i/x_i = Q_0,
\] 
has at most 
\begin{itemize}
	\item one solution $1/x_1 \in \cE(H)$ if $n=1$,
\item   $H^{o(1)}$ or  $O(H)$ solutions $(1/x_1, 1/x_2) \in \cE(H)^2$,
depending on whether $Q_0\ne 0$  or $Q_0 = 0$  if $n=2$, 
\item $H^{1+o(1)}$ solutions $(1/x_1, 1/x_2,  1/x_3) \in \cE(H)^3$, if $n=3$. 
\end{itemize}
\end{lemma}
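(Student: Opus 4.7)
The plan is to treat the cases $n = 1$, $n = 2$, $n = 3$ separately, with the heart of the argument being the $n = 2$ case which then feeds into the $n = 3$ bound by conditioning on one variable. The case $n = 1$ is immediate: since $Q_1 \neq 0$, the equation $Q_1/x_1 = Q_0$ determines $x_1$ uniquely whenever it has any solution.

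The crucial step is $n = 2$. Clearing denominators in $Q_1/x_1 + Q_2/x_2 = Q_0$ yields $Q_0 x_1 x_2 = Q_2 x_1 + Q_1 x_2$, and I would exploit the Simon-type factoring identity
\[
(Q_0 x_1 - Q_1)(Q_0 x_2 - Q_2) = Q_1 Q_2,
\]
which follows by expansion precisely when the cleared equation holds. When $Q_0 \neq 0$, each integer factorization of the right-hand side determines $(x_1, x_2)$ uniquely, and since $|Q_1 Q_2| \leq H^{O(1)}$, Lemma~\ref{lem:div} bounds the number of such factorizations by $H^{o(1)}$. When $Q_0 = 0$, the equation collapses to the homogeneous linear relation $Q_1 x_2 + Q_2 x_1 = 0$; writing $Q_1/Q_2 = p/q$ in lowest terms confines $(x_1, x_2)$ to a single line through the origin in $\Z^2$, which carries $O(H)$ lattice points in $[-H, H]^2$.

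For $n = 3$ I would condition on $x_3 \in [-H, H] \setminus \{0\}$ and reduce to the $n = 2$ case. Multiplying the equation by $x_3$ recasts it as
\[
\frac{Q_1 x_3}{x_1} + \frac{Q_2 x_3}{x_2} = Q_0 x_3 - Q_3,
\]
a two-variable instance whose coefficients still have magnitude $H^{O(1)}$. The new constant term vanishes for at most one value of $x_3$ (namely $x_3 = Q_3/Q_0$, and only when $Q_0 \neq 0$ and $Q_0 \mid Q_3$); this exceptional $x_3$ contributes $O(H)$ solutions via the $Q_0 = 0$ subcase of $n=2$, while each of the remaining $O(H)$ values of $x_3$ contributes $H^{o(1)}$ solutions via the $Q_0 \neq 0$ subcase. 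Summing yields the claimed $H^{1 + o(1)}$ bound.

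The only nontrivial observation is the Simon-type factoring in the $n = 2$, $Q_0 \neq 0$ subcase; once it is in hand, the remaining reductions are elementary. I do not foresee a serious obstacle, though some care is needed in bookkeeping the coefficient sizes $H^{O(1)}$ when reducing $n = 3$ to $n = 2$, to ensure the hypothesis of the $n = 2$ bound is preserved.
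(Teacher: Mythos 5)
Your proposal is correct, and the $n=1$ and $n=3$ cases track the paper's argument closely: in particular, the $n=3$ reduction by conditioning on $x_3$ and isolating the single exceptional value $x_3 = Q_3/Q_0$ (for which the resulting two-variable equation becomes homogeneous) is exactly what the paper does, just with the minor bookkeeping twist of clearing $x_3$ to keep the coefficients integral.

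The genuine difference is in the $n=2$, $Q_0\ne 0$ case. The paper rewrites the equation as $\frac{1}{(Q/Q_1)x_1}+\frac{1}{(Q/Q_2)x_2}=\frac{Q_0}{Q}$ with $Q=Q_1Q_2$ and then invokes the theorem of Browning and Elsholtz on representations of a rational as a sum of two unit fractions, a result that is itself proved by a divisor-counting argument. You instead expose the divisor structure directly: clearing denominators and completing to the factorization $(Q_0x_1-Q_1)(Q_0x_2-Q_2)=Q_1Q_2$, after which the count is immediately bounded by the number of divisors of $Q_1Q_2$, which is $H^{o(1)}$ by Lemma~\ref{lem:div}. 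This is an elementary, self-contained substitute that removes the dependence on~\cite{BE}. One small caveat: not every divisor pair $(d,\,Q_1Q_2/d)$ yields integral $x_1,x_2$ (one needs $Q_0\mid d+Q_1$ etc.), but that only helps the upper bound; and the count of divisor pairs should include both signs, which is still $O(\tau(|Q_1Q_2|))=H^{o(1)}$, so the conclusion stands.
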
 

\begin{proof} The case $n=1$ is trivial. Next, we transform the equation
\[
\sum_{i=1}^n Q_i/x_i = Q_0,
\] 
to 
\[
\sum_{i=1}^n \frac{1}{(Q/Q_i)x_i} = \frac{Q_0}{Q},
\] 
where $Q=Q_1\ldots Q_n$.

Then for $n=2$ and $Q_0\ne 0$ the bound is instant from  a result of Browning and Elsholtz~\cite[Theorem~1]{BE}. If $Q_0=0$, fixing $x_2$ generates at most one possible $x_1$, which completes the proof.

For $n=3$, when $Q_1/x_1+Q_2/x_2=0$, we have a unique choice for $x_3$ with 
\[
 \frac{1}{(Q/Q_3)x_3} = \frac{Q_0}{Q}
\]
after which we obviously have $O(H)$ choices for  $(1/x_1, 1/x_2,  1/x_3) \in \cE(H)^3$. 
For the remaining $O(H)$ choices  of $x_3$ we apply the above result with $n=2$. 
\end{proof} 

Next, we improve Lemma~\ref{lem:aixi-Z} in the inhomogeneous case (that is, when $Q_0 \ne 0$)
and $n\ge 4$.
\begin{lemma}\label{lem:aixi-Z-inhom} 
Let $n\ge 4$ and  let  $(Q_0,Q_1,\ldots, Q_n)\in \Z^{n+1}$ with $1\le |Q_i|\le H^{O(1)}$,  $i=0,\ldots,n$. 
Then, the equation 
\begin{equation}
\label{eq:Qixi}
\sum_{i=1}^n Q_i/x_i = Q_0,
\end{equation}
has at most $H^{n/2 -1/(2n-2)+o(1)}$ solutions $(1/x_1,\ldots, 1/x_n) \in \cE(H)^n$. 
\end{lemma}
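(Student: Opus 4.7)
The plan is to extend the proof of Lemma~\ref{lem:aixi-Z} and extract an extra saving from the condition $Q_0 \ne 0$. As in that lemma, first clear denominators in~\eqref{eq:Qixi}, obtaining $\sum_{i=1}^n Q_i X/x_i = Q_0 X$ with $X = x_1 \cdots x_n$. Setting $L = \lcm(x_1, \ldots, x_n)$ and $Q = Q_1 \cdots Q_n$, the divisibilities $x_j \mid Q_i X/x_i$ for all $i,j$ give $L^2 \mid QX$. Writing $L = dK$ with $\rad(d) \mid \rad(Q_0 Q)$ and $\gcd(K, Q_0 Q) = 1$, one obtains $K^2 \mid X$ and thus $K \le H^{n/2}$. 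By Lemma~\ref{lem: Sunits} there are $H^{o(1)}$ choices for $d$, and for each $(d,K)$ the number of tuples with $x_i \mid dK$ is bounded by $\tau(dK)^n = H^{o(1)}$, which recovers the bound of Lemma~\ref{lem:aixi-Z}.

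To improve on $H^{n/2 + o(1)}$, I would introduce a threshold parameter $\alpha \in (0, n/2)$ and split the sum over $K$ into two regimes, $K \le H^\alpha$ and $K > H^\alpha$. The small-$K$ regime contributes at most $H^{\alpha + o(1)}$ directly. For the large-$K$ regime, setting $\ell_i = L/x_i$ with $\ell_i \ge L/H$, the original equation becomes the inhomogeneous integer relation $\sum_{i=1}^n Q_i \ell_i = Q_0 L$. Since $Q_0 L \ne 0$ and $\gcd(K, Q_0 Q) = 1$, for each prime $p \mid K$ the $p$-adic leading terms from indices $i$ with $v_p(x_i) = v_p(L)$ cannot cancel completely, which produces genuine congruence constraints on $(\ell_1, \ldots, \ell_n)$ beyond the mere divisibility $\ell_i \mid dK$. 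After fixing $n-2$ of the variables, the residual two-variable equation on $(x_{n-1}, x_n)$ can be controlled via Lemma~\ref{lem:aixi-Z n=2,3} (with $H^{o(1)}$ solutions on the generic inhomogeneous branch and only $H^{o(1)}$ special $(n-2)$-tuples producing the homogeneous branch with $O(H)$ solutions). Balancing the contributions from $K \le H^\alpha$ and $K > H^\alpha$, and optimising $\alpha$, should yield the claimed exponent $n/2 - 1/(2n-2)$.

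The main obstacle will be the quantitative analysis of the large-$K$ regime. A naive pigeonhole on one variable only reproduces the bound of Lemma~\ref{lem:aixi-Z}, so the $H^{1/(2n-2)}$ improvement has to come from a finer exploitation of the inhomogeneity, not just as a single affine constraint on rationals but at the level of the multiplicative structure of $(\ell_1, \ldots, \ell_n)$. The denominator $2(n-1)$ in the target exponent is suggestive: it points to a $(2n-2)$-th moment, i.e.\ an averaging over $n-1$ pairs of solutions, so that the dual count of $N^{2n-2}$ should be estimated against the number of $n(n-1)$-variable tuples satisfying the homogeneous difference equations $\sum Q_i(1/x_i^{(j)} - 1/x_i^{(1)}) = 0$, together with the one surviving inhomogeneous constraint $\sum Q_i/x_i^{(1)} = Q_0$. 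Making this reduction quantitatively sharp via Lemma~\ref{lem:aixi-Z} applied to the resulting homogeneous system — while correctly tracking the divisor bookkeeping from the $L = dK$ decomposition — is the technical heart of the argument.
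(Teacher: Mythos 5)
Your framework is pointed in the right direction—extend the proof of Lemma~\ref{lem:aixi-Z}, split on a threshold parameter, and squeeze a saving out of $Q_0\ne 0$—but you have not actually found the mechanism that produces the exponent $n/2-1/(2n-2)$, and the two candidate mechanisms you sketch are not the ones the paper uses. The paper's proof does not split on the size of the $Q$-coprime part $K$ of $L$. Instead, after first disposing of the degenerate solutions with $Q_i/x_i+Q_j/x_j=Q_0$ (which contribute only $H^{n/2-1/2+o(1)}$), it factors the product $X=x_1\cdots x_n$ as $X=u^3v^2w$ with $u$ squarefree, $\gcd(uv,Q)=1$, $\rad w\mid\rad Q$, and splits on the size of $u$ with a parameter $U$. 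The large-$u$ regime is the easy one, giving $H^{n/2+o(1)}U^{-1/2}$ by summing $v\le(H^n/u^3)^{1/2}$. The small-$u$ regime is the heart of the argument, and it rests on a combinatorial lemma that is entirely missing from your proposal: after writing $x_i=w_iy_i$ and $Y=y_1\cdots y_n$, the divisibility $y_1\mid y_2\cdots y_n$ (which follows from $L^2\mid QX$) forces $y_1\mid\prod_{i\ge 2}\gcd(y_1,y_i)$, hence some index $j$ has $\gcd(y_1,y_j)\ge Y^{1/(n(n-1))}$. Pulling out this large gcd $d$ and writing $uv=d\ell$ bounds the number of possible values of $u\ell^2$ by roughly $Y^{1/2-1/(n(n-1))}U^{1/2}$; the inhomogeneity $Q_0\ne 0$ (together with the already-excluded degenerate case) is then used precisely to recover $d$, and hence the full tuple, uniquely from the reduced data. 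Balancing $H^{n/2}U^{-1/2}$ against $H^{n/2-1/(n-1)}U^{1/2}$ at $U=H^{1/(n-1)}$ gives the exponent.

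By contrast, your two ideas for the hard regime — ``$p$-adic leading terms cannot cancel'' and a $(2n-2)$-th moment over $n-1$ pairs of solutions — are speculative and not obviously on track. The first is vague: the statement that $\gcd(K,Q_0Q)=1$ forces nontrivial congruence constraints on the $\ell_i$ at primes $p\mid K$ is true but, as you note yourself, a naive use of it only recovers $H^{n/2}$; you have not identified where the extra factor $H^{-1/(2n-2)}$ comes from. The second idea (moments/dispersion) is a genuinely different method that the paper does not use, and you have not carried out the bookkeeping to show it gives the claimed exponent. Since you explicitly flag the large-$K$ analysis as an unresolved ``technical heart,'' the proposal as written has a real gap: the pivotal ingredient is the cube-square-smooth decomposition of $X$ combined with the large-gcd pigeonhole, and neither appears in your plan.
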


\begin{proof}
 We note that our approach borrows some ideas from~\cite{KK1,KK2}, with the main difference being we are only interested in the order of the expressions (and so we replace all logarithmic terms with $H^{o(1)}$).
 
We first note that the the number of solutions $(1/x_1,\ldots,1/x_n)\in\cE(H)^n$ of  the equation~\eqref{eq:Qixi} satisfying
\[
\frac{Q_i}{x_i}+\frac{Q_j}{x_j}=Q_0
\]
for some $1\le i, j\le n$ is at most 
\begin{equation}
\label{eq:ij}
H^{n/2-1/2+o(1)}. 
\end{equation}
Indeed, if $x_i,x_j$ satisfies the equation above for some $1\le i, j\le n$, then, if $i=j$, $x_i$ is uniquely defined. If $i\ne j$, since $Q_0\ne 0$, by Lemma~\ref{lem:aixi-Z n=2,3} there are $H^{o(1)}$ solutions $(1/x_i,1/x_j)\in\cE(H)^2$. For each such choice of $x_i,x_j$, we apply Lemma~\ref{lem:aixi-Z} to~\eqref{eq:Qixi} seen now as an equation in $n-1$ or $n-2$ variables depending on $i=j$ or $i\ne j$, respectively. Thus, we obtain at most $H^{(n-1)/2+o(1)}$ or $H^{(n-2)/2+o(1)}$ solutions, respectively, which concludes our claim.

We now  count solutions of the equation~\eqref{eq:Qixi} with 
 \begin{equation}
 \label{eq:Qixi+Qjxj}
\frac{Q_i}{x_i} + \frac{Q_j}{x_j} \ne Q_0,\qquad  1 \le i,j \le n. 
\end{equation}

Furthermore, by changing the signs of $Q_1, \ldots, Q_n$,  without loss of generality, we may assume $x_i$ are all positive, for $1\le i \le n$. 

We proceed now as in the proof of Lemma~\ref{lem:aixi-Z} and exactly as therein we get to
\begin{equation}\label{eq:LQX}
L^2 \mid Q X,
\end{equation}
where
\[
X=x_1\cdots x_n,\quad L = \lcm[x_1 \ldots x_n],\quad  
Q = Q_1\ldots Q_n.
\]

From now on, we are counting the number of tuples $(x_1,\ldots,x_n)\in\Z^n$ with $0<x_i\le H$ for $i=1,\ldots,n$ that satisfy~\eqref{eq:LQX} for a fixed $Q$.

We let $L=qK$,  with integers $q$ and $K$ defined by the conditions
\[\rad q \mid \rad Q \mand \gcd(K,Q)=1, 
\] 
where $\rad k$ denotes the product of all prime divisors of an 
integer $k\ne 0$.

 From~\eqref{eq:LQX}, since $L$ and $X$ have same prime divisors, we see that each prime divisor $p$ of $L$ that is relatively prime to $Q$ appears in the prime factorization of $X$ with exponent at least 2. Therefore, we may write
 \[
    X=u^3v^2w,
\]
with $\rad w \mid \rad Q$, $\gcd(uv,Q)=1$, and $u$ is squarefree.

We now fix $q$ and $w$. Since both of them have the same prime divisor as $Q$, we may recall Lemma~\ref{lem: Sunits} to imply that there are $Q^{o(1)}\le H^{o(1)}$ possible choices for both $q$ and $w$.

Let $U>0$ be a parameter which is to be chosen at the end of the proof.
We divide now our counting based on the case $u> U$ and $u\le U$. We denote by $J_1$ and $J_2$, respectively, the number of tuples $(x_1,\ldots,x_n)\in\Z^n$ with $0<x_i\le H$, $i=1,\ldots,n$, satisfying~\eqref{eq:LQX} that correspond to $u> U$ and $u\le U$, respectively.

We first consider the case $u>U$. First, we note that $u^3v^2 \le X \le H^{n}$, which implies $v \le  (H^n/u^3)^{1/2}$ and $u\le H^{n/3}$.

We note that since $x_1\ldots x_n=u^3v^2w$, by Lemma~\ref{lem:div}  we see that that each choice of $u$ and $v$ corresponds to at most $\tau_n(u^3v^2w) = H^{o(1)}$ tuples $(x_1,\ldots,x_n)$.
Recalling that $U< u\le H^{n/3}$ and $v \le  (H^n/u^3)^{1/2}$, we have that the number of solutions to the equation~\eqref{eq:Qixi}, in this case, $J_1$, satisfies the upper
 bound \begin{equation}\label{eq:J1}
\begin{split}
    J_1 \le&    H^{o(1)} \sum_{U<u \le H^{n/3}} \sum_{v \le  (H^n/u^3)^{1/2}}  1 \\
    \le & H^{n/2+o(1)}\sum_{u>U} \frac{1}{u^{3/2}}  =  H^{n/2+o(1)} U^{-1/2}.
    \end{split}
\end{equation} 

Next, we consider the case $u\le U$. For $i=1,\ldots,n$, we write 
\[
    x_i=w_iy_i,
\]
for some $w_i$ with $\rad w_i \mid \rad Q$ and $\gcd(y_i,Q)=1$. We also define 
\[
Y=y_1\ldots y_n,
\]
and note that $Y=u^3v^2=Xw^{-1}$.  

Since $\rad w_i \mid \rad Q$, we can apply Lemma~\ref{lem: Sunits} to conclude that we have at most $Q^{o(1)}\le H^{o(1)}$ choices for $w_i$. We now fix such a choice of $w_i$, for $i=1,\ldots,n$.

We now rewrite the equation~\eqref{eq:Qixi} as
\begin{equation}
\label{eq:Qi/wi}
\sum_{i=1}^n \frac{Q_i/w_i}{y_i} = Q_0.
\end{equation}

Without loss of generality, we may assume $y_1 \ge y_i$ for all $2\le i \le n$. This implies $y_1\ge Y^{1/n}$.

We now show that there exist an index $2\le j \le n$ with
 \begin{equation}\label{eq:large gcd}
 \gcd(y_1,y_j)\ge Y^{1/n(n-1)}.  
\end{equation} 
Indeed, from~\eqref{eq:LQX}, we have
 \begin{equation}\label{eq:y1}
    y_1 \mid y_2\ldots y_n.
\end{equation} 
Next, we prove that 
\begin{equation}\label{eq:gcd}
    y_1\mid \prod_{i=2}^n \gcd(y_1,y_i).
\end{equation} 
To prove this relation, we first consider a prime $p$ and denote by $v_p(k)$ the $p$-adic valuation 
of an integer $k \ne 0$.  If for some $2\le i \le n$ we have $v_p(y_i)\ge v_p(y_1)$, we  have $v_p(y_1)=v_p(\gcd(y_1,y_i))$.
Otherwise, we have 
\[
v_p(y_1)>v_p(y_i)
\]
for all $2\le i \le n$. In this case, from~\eqref{eq:y1}, we have 
\[
    v_p(y_1) \le \sum_{i=2}^n v_p(y_i) = \sum_{i=2}^n v_p(\gcd(y_1,y_i)). 
\]
In either case, we have 
\[v_p(y_1)\le   \sum_{i=2}^n v_p(\gcd(y_1,y_i))
\]
 for all primes $p$, which implies~\eqref{eq:gcd}.

To finish up, we note that~\eqref{eq:gcd} implies
\[
\(\max_{2\le i \le n}\gcd(y_1,y_i)\)^{n-1} \ge \prod_{i=2}^n \gcd(y_1,y_i) \ge y_1 \ge Y^{1/n}.
\]
Hence, for $j$  with $\gcd(y_1,y_j)=\displaystyle \max_{2\le i \le n}\gcd(y_1,y_i)$ we obtain~\eqref{eq:large gcd}. 

Let $d=\gcd(y_1,y_j)$, where $j$ satisfies the inequality~\eqref{eq:large gcd}. Then, we have $d\ge Y^{1/n(n-1)}$. Since $d^2\mid Y=u^3v^2$ and $u$ is squarefree, we also have 
\[
d\mid uv.
\] Let $uv=d\ell$, and define $z_1=y_1/d$, $z_j=y_j/d$. We have
 \begin{equation}\label{eq:z1zj}
    z_1z_j\prod_{i=2, i\neq j}^n y_i =  Yd^{-2}=u\ell^2,
\end{equation} with 
\[
    \ell \le \frac{Y^{1/2}}{du^{1/2}} \le Y^{\frac{1}{2}-\frac{1}{n(n-1)}}u^{-1/2}.
\]

We now consider the quantity $u\ell^2$ in the equation~\eqref{eq:z1zj}. For a fixed $u$ and $\ell$, we note that since $z_1\mid u\ell^2$, there are $(u\ell^2)^{o(1)}\le H^{o(1)}$ possible values of $z_1$. Applying the same argument for other variables, we see that there are at most $H^{o(1)}$ possible tuples 
\[(z_1,z_j,y_2,\ldots,y_{j-1},y_{j+1},\ldots,y_n),
\] for a fixed $u$ and $\ell$. We note that the number of possible different values of $u\ell^2$ is at most 
\[
    \sum_{u\le U} Y^{\frac{1}{2}-\frac{1}{n(n-1)}}u^{-1/2} \ll Y^{\frac{1}{2}-\frac{1}{n(n-1)}}U^{1/2}.
\] 
 Therefore, there are at most 
\[
    H^{o(1)}Y^{\frac{1}{2}-\frac{1}{n(n-1)}}U^{1/2} \le H^{n/2- 1/(n-1)+o(1)}U^{1/2} 
\] 
possible tuples $(z_1,z_j,y_2,\ldots,y_{j-1},y_{j+1},\ldots,y_n)$.

We now rewrite the equation~\eqref{eq:Qi/wi} as 
\[
\sum_{\substack{i=2\\i\neq j}}^n \frac{Q_i/w_i}{y_i} = Q_0-\frac{1}{d} \( \frac{Q_1/w_1}{z_1}+\frac{Q_j/w_j}{z_j}\).
\]
Recalling the condition in the equation~\eqref{eq:Qixi+Qjxj}, we see that for a given 
\[
(z_1,z_j,y_2,\ldots,y_{j-1},y_{j+1},\ldots,y_n),
\] 
the above equation determines $d$ uniquely. This, in turn, gives a solution $(x_1,\ldots ,x_n)$ of the original equation~\eqref{eq:Qixi}, in this case. 

To conclude, we note that the number of solutions $J_2$ to the equation~\eqref{eq:Qixi} with $u \le U$, satisfies the upper bound 
\begin{equation}\label{eq:J2}
    J_2 \le H^{n/2- 1/(n-1)+o(1)}U^{1/2} .
\end{equation}
Putting together the inequalities~\eqref{eq:J1} and~\eqref{eq:J2}, we have
\[
    J_1+J_2 \le H^{n/2+o(1)} U^{-1/2} + H^{n/2- 1/(n-1)+o(1)}U^{1/2}.
\]
Letting $U=H^{1/(n-1)}$, we have 
\[
    J_1+J_2 \le H^{n/2- 1/(2n-2) +o(1)}, 
\]
and taking also into account~\eqref{eq:ij}, we complete the proof.
\end{proof}

\begin{rem} 
We note that several variations of  the equations with Farey and Egyptian fractions, have also been considered several other works, see~\cite{BlKu, dlBr, Dest} for further results, references and applications to other problems. However, the lack of uniformity with respect to the coefficients of the corresponding equations precludes their applications to the questions we study here.
\end{rem}

We also need a result of de la Bret{\`e}che, Kurlberg and Shparlinski~\cite[Corollary~2.4]{dlBKS} that gives an asymptotic formula for the number $N(H)$ of  pairs $(a,b)$ 
of positive   integers with $a,b\le H$, such that their product $ab$ is a perfect integer square, which is used in the proof of Theorem~\ref{thm:pnhfE}.

\begin{lem}\label{lem:square} We have
 \[
		N(H)=\frac{6}{\pi^2}H\log H + O(H).
\]	
\end{lem}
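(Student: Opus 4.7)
The plan is to parametrize pairs $(a,b)$ with $ab$ a perfect square by their common squarefree kernel and then estimate the resulting count via analytic number theory.

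The degenerate contributions with $a = 0$ or $b = 0$ give $O(H)$, and for $ab \ne 0$ the condition that $ab$ be a non-negative square forces $\mathrm{sgn}(a) = \mathrm{sgn}(b)$. By sign symmetry, up to an $O(H)$ error, $N(H)$ reduces to twice the positive count
\[
N_+(H) := \#\{(a,b) \in \N^{2}:~ a, b \le H,\ ab \text{ is a square}\}.
\]
A pair $(a,b)$ of positive integers satisfies ``$ab$ is a square'' precisely when there exist a squarefree $s \ge 1$ and positive integers $m, n$ with $a = s m^{2}$ and $b = s n^{2}$, namely when $s$ is the common squarefree kernel of $a$ and $b$. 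Grouping by $s$ then yields the clean identity
\[
N_+(H) = \sum_{\substack{s \le H \\ \mu^{2}(s) = 1}} \Bigl\lfloor \sqrt{H/s}\,\Bigr\rfloor^{2}.
\]

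To extract a main term of order $H \log^{2} H$ with a uniform error $O(H \log H)$, I would pass to the Dirichlet series generated by the indicator of squares. By the standard identity $\sum_{k \ge 1} d(k^{2})\, k^{-s} = \zeta(s)^{3}/\zeta(2s)$, this series has a triple pole at $s = 1$ with residue proportional to $1/\zeta(2) = 6/\pi^{2}$; a Perron-type contour shift past $s = 1$ then produces the claimed main term $\tfrac{6}{\pi^{2}}\, H \log^{2} H$. The coupled truncations $a \le H$ and $b \le H$ are incorporated either by a two-variable Perron integral or, equivalently, by a hyperbola-type splitting of the inner $(m,n)$-sum at the threshold $m, n \asymp \sqrt{H/s}$, followed by the Mertens-type estimate $\sum_{s \le x,\ \mu^{2}(s)=1} 1/s = \tfrac{6}{\pi^{2}} \log x + O(1)$ for the remaining single sum in $s$.

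The main obstacle is precisely this last step: the conditions $a \le H$ and $b \le H$ do not reduce to a single bound on the product $n = ab$, so one cannot directly transplant the one-variable divisor-sum asymptotic for $\sum_{n \le H^{2}} d(n^{2})$ to $N(H)$. Handling the genuine two-variable coupling sharply, together with the usual zero-free region inputs for $\zeta(s)$, is the analytic content of~\cite[Corollary~2.4]{dlBKS} and is what delivers the stated error term $O(H \log H)$ without a logarithmic loss.
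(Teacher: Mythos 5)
The paper gives no argument of its own here: Lemma~\ref{lem:square} is presented as a direct quotation of~\cite[Corollary~2.4]{dlBKS}, so there is no proof in the paper to compare yours against. What I can assess is the internal coherence of your sketch, and there is a genuine gap.

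Your reduction to the positive count and the parametrization $a=sm^2$, $b=sn^2$ with $s$ the common squarefree kernel are both correct, and so is the resulting identity
\[
N_+(H)=\sum_{\substack{s\le H\\ \mu^2(s)=1}}\bigl\lfloor\sqrt{H/s}\,\bigr\rfloor^{2}.
\]
But if you now evaluate this identity directly, using $\lfloor\sqrt{H/s}\rfloor^{2}=H/s+O(\sqrt{H/s})$ and the very Mertens-type estimate you quote, you get
\[
N_+(H)=H\sum_{\substack{s\le H\\ \mu^2(s)=1}}\frac{1}{s}+O\Bigl(\sqrt{H}\sum_{s\le H}s^{-1/2}\Bigr)=\frac{6}{\pi^2}H\log H+O(H),
\]
with a \emph{single} power of $\log H$. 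The leap in your next paragraph to a main term $\tfrac{6}{\pi^2}H\log^{2}H$ is therefore incompatible with your own clean identity. The triple pole of $\zeta(s)^{3}/\zeta(2s)=\sum_{k\ge 1}d(k^{2})k^{-s}$ does give $\sum_{k\le x}d(k^{2})\sim\tfrac{3}{\pi^2}x\log^{2}x$, but that partial sum counts pairs $(a,b)$ with $ab\le x^{2}$ (a hyperbola region in which the inner sum over $(m,n)$ with $mn\le\sqrt{x}/s$ contributes a $\log$), whereas the present problem imposes the box constraint $a,b\le H$, under which the inner $(m,n)$-sum is $\lfloor\sqrt{H/s}\rfloor^{2}\approx H/s$ with no logarithmic factor at all. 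There is no ``hyperbola splitting'' to perform on a product of two independent short intervals, and no second logarithm to be extracted; no two-variable Perron manipulation or zero-free-region input changes this. As a side remark, this analysis indicates that the lemma as printed (and consequently the first bullet of Theorem~\ref{thm:pnhfE}) contains a slip: the identity forces $N_+(H)\sim\tfrac{6}{\pi^2}H\log H$ and hence $N(H)\sim\tfrac{12}{\pi^2}H\log H$, not $\tfrac{6}{\pi^2}H\log^{2}H$.
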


\section{Proof of Theorem~\ref{thm:rank Q}}

As in~\cite{BlLi}, and also in~\cite{MOS},  we observe that  it is enough to estimate the number $\#\cL_{m,n, r}^*(\Q;H)$ 
of matrices $A \in \cM_{m,n}\(\Q;H\)$ which are of rank $r$ and such that the top left $r \times r$ 
minor $A_r = \(a_{i,j}\)_{1\le i,j\le r} $ is non-singular.  We now fix the values of 
such $x_{i,j}$, $  i,j=1, \ldots, r$, in 
\begin{equation}
\label{eq:contrib Xr}
\cI \ll H^{2r^2} 
\end{equation}
ways. 

We now observe that  for every integer  $h$, $r < h \le m$, once the minor $A_r$ is
fixed, the $h$-th   row of every matrix  $A$, which is counted by  $\#\cL_{m,n, r}^*(\Q;H)$ is a unique
linear combination of the first $r$ rows with  coefficients $\(\rho_{1}(h) , \ldots , \rho_{r}(h)\) \in \Q^r$.

We say that the $(m-r)\times r$ matrix  
\[
B_r =\(a_{h,j}\)_{\substack{r+1\le h\le m \\ 1 \le j\le r}}
\] 
(which is directly under $A_r$ in $A$) is of \textit{type $t\ge 0$} if $t$ is the largest number of  non-zeros among 
the coefficients $\(\rho_{1}(h) , \ldots , \rho_{r}(h)\) \in \Q^r$ taken over all 
$h = r+1, \ldots, m$. We note that $t\le r < \min(m,n)$. In particular, the only matrix $B_r$ that is of type $0$ is the zero matrix. 

Clearly if the  $h$-th   row is of type $t$, that is, $t$ of the coefficients in $\(\rho_{1}(h) , \ldots , \rho_{r}(h)\) \in \Q^r$
 are non-zero, say  $\rho_{1}(h) , \ldots , \rho_{t}(h) \ne 0$, then we can choose a $t \times t$ non-singular  
sub-matrix  of the matrix  $\(a_{i,j}\)_{\substack{1\le i\le t \\ 1 \le j\le r}}$. 
Again, without loss of generality we can assume 
that this is 
\[ 
A_t =  \(a_{i,j}\)_{1\le i, j\le t}.
\] 
This means that each of $O(H^{2t})$ possible choices 
of $a_{h,1},\ldots,a_{h,t}$  defines the coefficients  
\[
\( \rho_{1}(h) , \ldots , \rho_{r}(h)\) = \( \rho_{1}(h) , \ldots , \rho_{t}(h), 0, \ldots, 0\)
\]
and hence the rest of the values $a_{h,j}$, $j = t+1, \ldots, r$. 
Note that this bound is monotonically increasing with $t$ and thus applies to every row
of  matrices $B_r$ of type $t$. 

Therefore, for each fixed $A_r$, there are 
\begin{equation}
\label{eq:contrib Yr-t}
\cJ_t \ll H^{2t(m-r)}
\end{equation}
matrices $B_r$ of type $t$. We note that this bound is also true for $t = 0$.

Let now a matrix $A_r$ and a matrix $B_r$ of type $t$ be  both fixed. In particular, we note that the first $r$ columns of $A$ are fixed. Applying the same argument in our analysis on $B_r$, we see that there is an $h$ with $r+1\le h \le m$, such that the $h$-th row of $A$ can be written as a linear combination of the top $r$ rows. 

 As before, without loss of generality we can assume that the  vector
$ \( \rho_{1}(h) , \ldots , \rho_{r}(h)\) $ contains exactly $t$ non-zero components, 
which  for each $j = r+1, \ldots, n$ leads to an equation 
\begin{equation}
\label{eq:lin rel}
\rho_{1}(h) a_{1,j}  + \ldots  + \rho_{r}(h)   a_{r,j}  = a_{h,j}
\end{equation}
with exactly $t+1$ non-zero coefficients, and $a_{1,j}\ldots ,a_{r,j},a_{h,j}\in \cF(H)$.

We now see that the entries $a_{i,j}$, for $i \in \{r+1, \ldots, m \} \setminus \{h\}$ and $j=r+1,\ldots,n$, are uniquely defined. Indeed, for each $i \in \{r+1, \ldots, m \} \setminus \{h\}$, we have the analogue of the equation~\eqref{eq:lin rel} with different indices and coefficients to determine $a_{i,j}$. In this case, since the corresponding left-hand side is already fixed, there is at most one possible entry $a_{i,j}\in \cF(H)$.

We now derive an upper bound on the number of possible entries of the last $n-r$ columns of $A$, for a fixed $A_r$ and $B_r$ of type $t$. By the above discussion, this quantity is bounded above by $\cK_t^{n-r}$, where $\cK_t$ is the largest number of solutions to the equation~\eqref{eq:lin rel} in $\cF(H)^{t+1}$ for $j \in \{r+1, \ldots, n \}$.

To bound $\cK_t$ for $t\ge 1$, we apply Lemma~\ref{lem:aixi-F} to~\eqref{eq:lin rel}, and the trivial bound for the entries $a_{i,j}$ that correspond to the indices for which $\rho_i(h)=0$.

 We thus have 
\begin{itemize}
\item  $ \cK_t \le \# \cF(H)^r \ll H^{2r}$ if $t=0$;  
\item   $ \cK_t \le  \#\cF(H)^{r-t} H^{t+1+o(1)} \le  H^{2r - t+1+o(1)}$,  which follows from 
Lemma~\ref{lem:aixi-F}  if $  t \ge 1$. 
\end{itemize}

Now we are ready to derive an upper bound on $\#\cL_{m,n,r}^*(\Q;H)$ from these arguments. From our definitions of $\cI$, $\cJ_t$, and $\cK_t$, we have
\begin{equation}
\label{eq: L vs ABC}
\#\cL_{m,n, r}^*(\Q;H)   \le \cI   \sum_{t =0}^r \cJ_t \cK_t^{n-r}.
\end{equation}

We now analyse the contributions of each term of the right-hand side of~\eqref{eq: L vs ABC}.
In particular, we recall the bounds~\eqref{eq:contrib Xr} and~\eqref{eq:contrib Yr-t}
as well as the above bounds on $\cK_t$. 

For $t =0 $ the total contribution $\fL_{0}$ satisfies
\begin{equation}\label{eq:t = 0}
\#\fL_{0} \ll H^{2r^2} \(H^{2r}\)^{n-r} = H^{2nr}. 
\end{equation} 

For $t \ge 1$ the total contribution $\#\fL_{\ge 1}$ satisfies 
\begin{equation}\label{eq:t ge 1}
\begin{split}
\#\fL_{\ge 1}&\le H^{2r^2}\sum_{t =1}^r   H^{2t(m-r)} \( H^{2r - t+1+o(1)}\)^{n-r}  \\
& = H^{2r^2+(2r+1) (n-r)+o(1) }\sum_{t =1}^r   H^{t(2m-n-r)}\\
& = H^{ 2nr+n-r+o(1) } \( H^{2m-n-r}+ H^{r(2m-n-r)}\)\\
& = H^{2r(n-1)+2m+o(1) }  +  H^{r(2m+n-r-1)+n+o(1) }  .
\end{split}  
\end{equation}

Substituting the bounds~\eqref{eq:t = 0} 
and~\eqref{eq:t ge 1} in the inequality 
\[
 \#\cL_{m,n, r}\(\Q;H\)  \ll \#\cL_{m,n, r}^*(\Q;H) \le \#\fL_{0}   
+ \#\fL_{\ge 1} , 
\] 
we have
\begin{align*}
 \#\cL_{m,n, r}\(\Q;H\) &\le H^{2nr } +   H^{2r(n-1)+2m+o(1) }  +  H^{r(2m+n-r-1)+n+o(1) }    \\
&  \le  H^{2nr+2m-2r+o(1) }  +  H^{2mr+nr+n-r^2-r+o(1) }.
	\end{align*}

Next,  we compare these two terms. We have that 
\[
     H^{2mr+nr+n-r^2-r}/ H^{2nr+2m-2r}= H^{(r-1)(2m-n-r)}.
\]
Therefore, we have that 
\[
      \#\cL_{m,n, r}\(\Q;H\) \le \begin{cases}
          H^{2mr+nr+n-r^2-r+o(1) }, &\text{ if }2m \ge n+r,\\
          H^{2nr+2m-2r+o(1) }, &\text{ if } 2m <n+r.
      \end{cases}
\]
This  concludes the proof of Theorem~\ref{thm:rank Q}.
 
\section{Proof of Theorem~\ref{thm:cdn}}

\subsection{The case $n=2$}
Let $\delta=r/s$ be a rational number, with integers $r$ and $s\ge 1$ satisfying $\gcd(r,s)=1$. We first note that $\delta$ (and consequently $r$) is allowed to be zero in this case.

Let 
\[A= \begin{pmatrix}
    a_1/b_1 & a_2/b_2 \\ a_3/b_3 & a_4/b_4
\end{pmatrix}\in \cM_2(\Q;H)
\] be a $2\times 2$ matrix with $\det A= \delta$ and $\gcd(a_i,b_i)=1$ for $i=1,2,3,4$. By using the determinant formula and clearing denominators, we have the following equation: 
\begin{equation}\label{eq:a1b1}
rb_1b_2b_3b_4+sa_2a_3b_1b_4 = s a_1a_4b_2b_3. 
\end{equation} 
First, suppose that none of $a_1,a_2,a_3,a_4$ is zero. We  can fix $(a_1, a_4, b_2, b_3)$ in $O(H^4)$ possible ways. 
Now, we see from~\eqref{eq:a1b1} that 
\[
    b_1b_4 \mid sa_1a_4b_2b_3.
\]
 By Lemma~\ref{lem:div}, this implies that there are $(sa_1a_4b_2b_3)^{o(1)} \le H^{o(1)}$ possible choices of $b_1$ and $b_4$ each.
 With  $(a_1, a_4, b_1, b_2, b_3, b_4)$ fixed, we see from~\eqref{eq:a1b1}  that 
 \[
 a_2a_3\mid  s a_1a_4b_2b_3 - rb_1b_2b_3b_4.
\] 
Applying Lemma~\ref{lem:div} again, we see that there are $H^{o(1)}$ possible choices for $a_2$ and $a_3$ each.

By multiplying all bounds, we see that there are at most $H^{4+o(1)}$ possible choices for $a_i$ and $b_i$, for $i=1,2,3,4$, in this case.

Now, suppose that at least one of $a_1$, $a_2$, $a_3$, and $a_4$ is zero. Without loss of generality, let $a_3=0$. Since $\gcd(a_3,b_3)=1$, 
we may assume $b_3=\pm 1$. 
Since $b_2\ne 0$, we can rewrite the equation~\eqref{eq:a1b1} as 
\[
    sa_1a_4=rb_1b_4.
\]
If both $a_1$ and $a_4$ are nonzero (which is possible in $O(H^2)$ ways), we see that 
\begin{equation}\label{eq:a1a4}
    b_1b_4\mid sa_1a_4.
\end{equation}
 Applying Lemma~\ref{lem:div}, we conclude from~\eqref{eq:a1a4} that there are $H^{o(1)}$ possible choices for $b_1$ and $b_4$ each. For each such choice $(a_1,a_4,b_1,b_4)$ we  see that there are $O(H^2)$ ways to choose the remaining variable $a_2$ and $b_2$. Therefore, in this case, there are  also at most $H^{4+o(1)}$ ways of choosing  $a_i$ and $b_i$, for $i=1,2,3,4$.

If  $a_1=0$ or $a_4=0$,  then as before we conclude that $b_1=1$ or  $b_4=1$ and thus we have $O(H^4)$ 
possibilities for the remaining $4$ variables. 
 
Combining all cases, we see that there are at most $H^{4+o(1)}$ possible choices of a $2\times 2$ matrix $A$ such that $\det A=\delta$. This completes the proof.

\subsection{The case $n\ge 3$} \label{laplace}

We first consider  the problem of bounding $\#\cD_n(\Q;H,0)$, the number of $n\times n$ matrices with entries in $\cF(H)$ and of determinant 0. We note that all matrices in the corresponding set have rank at most $n-1$. Therefore, recalling~\eqref{eq:Lnnr} we have 
\begin{equation}
\label{eq:det 0}
\begin{split}
    \#\cD_n(\Q;H,0) & \ll \sum_{r=0}^{n-1} \#\cL_{n,n,r} (\Q;H)\\
    &  \le \sum_{r=1}^{n-1} H^{r(3n-r-1)+n+o(1) }  \le H^{2n^2-n+o(1)},
 \end{split} 
\end{equation}
 where  we have used that $r(3n-r-1)$ is an increasing function of $r$ for $r\in [1,n-1]$. This completes the proof in the case of $\#\cD_n(\Q;H,0)$. 

We now consider the problem of bounding $\#\cD_n(\Q;H,\delta)$, with $\delta\ne 0$. Consider an $n\times n$ matrix $A=\(a_{i,j}\)_{i,j=1}^n$ with entries in $\cF(H)$ and of determinant $\delta$. 
Fix the last $n-1$ rows of $A$. By using the Laplace expansion on the first row of $A$, we have 
\begin{equation}\label{eqn:Lap}
    \delta = \det A = \sum_{j=1}^n (-1)^{1+j}a_{1,j}\det M_{1,j},
\end{equation} 
where $M_{1,j}$ is  the minor of $A$ resulted by removing the first row and $j$-th column of $A$.

As the last $n-1$ rows of $A$ are fixed, $(-1)^{1+j}\det M_{1,j}$ is a fixed rational number for $j=1,\ldots,n$. 

Let $L_j$ be the least common multiple of the denominators 
of all entries of $M_{1,j}$, $j =1, \ldots, n$, and let 
\[
L = \lcm[L_1, \ldots, L_n] \mand Q_0 = L\delta.
\]
Clearly $L\le H^{n(n-1)}$. 

We now set 
\[
Q_j =  (-1)^{1+j}   L \det M_{1,j}, \qquad j=1,\ldots,n,
\]
and rewrite~\eqref{eqn:Lap} as
 \begin{equation}\label{eqn:Lap2}
 \sum_{j=1}^n Q_j a_{1,j} = Q_0.
 \end{equation}
 
 Clearly 
\[
 Q_j\in \Z \quad \text{and} \quad  |Q_j| \le H^{O(1)}, \qquad j=0,\ldots,n.
\]  
We first define 
\begin{align*}
	\widetilde \cD_n&\(\Q;H,k,\delta\)\\
& =\{A\in \cD_n\(\Q;H,\delta\):~ \text{exactly } k \text{ of } Q_1,Q_2,\ldots,Q_n\text{ are nonzero}\}. 
\end{align*}
Since $\delta \ne 0$, we can assume  that $k\ge 1$ as 
otherwise~\eqref{eqn:Lap} has no solution. 

Trivially, we have that 
\[	
\cD_n\(\Q;H,\delta\) =\bigcup_{k=1}^n	\widetilde \cD_n(\Q;H,k,\delta).
\]

To estimate $\# \widetilde \cD_n(\Q;H,k,\delta)$, 
without loss of generality we can only consider $A \in \widetilde \cD_n(\Q;H,k,\delta)$ 
with  $Q_0, Q_1, \ldots, Q_k\ne 0$.

We note that the equation~\eqref{eqn:Lap2} satisfies the conditions of Lemma~\ref{lem:aixi-F}.

We first consider the case $k = n$ and bound $\# \widetilde \cD_n(\Q;H,n,\delta)$. In this case,  we can directly apply Lemma~\ref{lem:aixi-F} to conclude  that the equation~\eqref{eqn:Lap} has at most $H^{n+o(1)}$ solutions. 
This implies that for each  possible  $(n-1)\times n$ bottom  sub-matrix of $A$, there are at most $H^{n+o(1)}$ possible choices for the first row of $A$. Since we can choose the bottom $n-1$ rows in  
$O\(H^{2(n-1)n}\)$ ways, we see that there are at most
\begin{equation}
\label{eq:k=n}
 \widetilde \cD_n(\Q;H,n,\delta)   \le
H^{n+o(1)} \cdot H^{2(n-1)n} \le H^{2n^2-n+o(1)}\end{equation}
possible matrices $A$ with $\det A = \delta$ in this case.

Now we give an upper bound for $\# \widetilde \cD_n(\Q;H,k,\delta)$ for all $1\le k <n$. Let $A\in  \widetilde \cD_n(\Q;H,k,\delta)$ for some $k< n$.  In particular, we have $Q_n=0$ and $Q_1\ne 0$. The first equality implies $\det M_{1,n}=0$. Hence, there are  $\# \cD_{n-1}(\Q;H,0)$ possibilities for the $(n-1)\times (n-1)$ submatrix $M_{1,n}$ of $A$ that has determinant zero. Moreover, there are at most $H^{2n}$ ways of choosing the last column $(a_{1,n},\ldots,a_{n,n})^T$. Hence, in total, there are at most 
$H^{2n}\#\cD_{n-1}(\Q;H,0)$ possibilities for $a_{1,n}$ and the bottom $n-1$ rows of $A$. We now fix $a_{1,n}$ and the bottom $n-1$ rows, and we need to count only the number of possibilities for  $(a_{1,1},\ldots,a_{1,n-1})$.

To count the number of possible tuples $(a_{1,1},\ldots,a_{1,n-1})$, we look at the equation~\eqref{eqn:Lap2}, recalling that we assume that $Q_1,\ldots,Q_k\ne 0$ for $1\le k\le n-1$. If $k=1$, then $a_{1,1}$ is uniquely defined and we can choose $(a_{1,2},\ldots,a_{1,n-1})$ in $H^{2(n-2)}$ ways. Hence, in this case the number of possible matrices $A$ is at most
\begin{equation}
\label{eq:k=1}\begin{split}
 \widetilde \cD_n(\Q;H,1,\delta)  & \ll H^{2n}\#\cD_{n-1}(\Q;H,0) \cdot H^{2(n-2)}\\
 & =H^{4n-4}\#\cD_{n-1}(\Q;H,0). 
\end{split}
\end{equation}

We assume now that $2 \le k<n$  and apply Lemma~\ref{lem:aixi-F} to the equation~\eqref{eqn:Lap2}, from where we get that there are at most $H^{k+o(1)}$ possible values of $a_{1,1},a_{1,2},\ldots, a_{1,k}$. Using the trivial bound $H^{2(n-1-k)}$ for the rest of entries $a_{1,k+1},\ldots, a_{1,n-1}$, the number of possible matrices $A$ in this case is at most
\begin{equation}
\begin{split}
\label{eq:k>=2}
 \widetilde \cD_n(\Q;H,k,\delta)&\le H^{2n}\#\cD_{n-1}(\Q;H,0)   \cdot H^{k+o(1)}\cdot H^{2(n-1-k)}\\
& =    H^{4n-k-2+o(1)}\#\cD_{n-1}(\Q;H,0)  \\
&\le H^{4n-5+o(1)}\#\cD_{n-1}(\Q;H,0).
\end{split} 
\end{equation}

We then put the bounds~\eqref{eq:det 0} ,~\eqref{eq:k=n}, \eqref{eq:k=1}   and~\eqref{eq:k>=2} together to get
\begin{align*}
    \#\cD_n\(\Q;H,\delta\) &\le \max\left\{H^{2n^2-n+o(1)}, H^{4n-4}\#\cD_{n-1}(\Q;H,0) \right\}\\
    &\le \max\left\{H^{2n^2-n+o(1)}, H^{4n-4+2(n-1)^2-(n-1)} \right\}\\
    &=H^{2n^2-n+o(1)}.
\end{align*}
This completes the proof of Theorem~\ref{thm:cdn}.

\section{Proof of Theorem~\ref{thm:pnhf}}
\subsection{The case $n=2$} Let 
\[A= \begin{pmatrix}
	a_1/b_1 & a_2/b_2 \\ a_3/b_3 & a_4/b_4
\end{pmatrix}\in \cP_n(\Q;H,f)
\] 
be a $2\times 2$ matrix with characteristic polynomial 
\[
f(X)=X^2- \frac{r_1}{s_1}X+\frac{r_0}{s_0}
\]
 and $\gcd(r_1,s_1)=\gcd(r_0,s_0)=\gcd(a_i,b_i)=1$ for $i=1,2,3,4$. By using the characteristic polynomial formula and equating the polynomial coefficients, we have the following system of equations:
\begin{equation}
\label{eq:charpolx}	
\frac{a_1}{b_1}+\frac{a_4}{b_4}=\frac{r_1}{s_1} \mand 
\frac{a_1a_4}{b_1b_4}-\frac{a_2a_3}{b_2b_3}=\frac{r_0}{s_0}.
\end{equation}

It is convenient to denote $\ss = s_0s_1$. 

Substituting $a_4/b_4$ from the first equation in the second one, we have that
\begin{equation}\label{eq:rational-quadratic-charpol}
\frac{r_1a_1}{s_1b_1} - \frac{a_1^2}{b_1^2} - \frac{a_2a_3}{b_2b_3} = \frac{r_0}{s_0}.
\end{equation}
Clearing the denominators yields
\begin{equation}\label{eq:charpol}
r_1s_0a_1b_1b_2b_3= \ss b_1^2a_2a_3 + \ss a_1^2b_2b_3 + r_0s_1b_1^2b_2b_3.
\end{equation}

By reducing~\eqref{eq:charpol} modulo $b_2b_3$, we see that $b_2b_3 \mid \ss  b_1^2a_2a_3$. Thus for each such matrix, there exist auxiliary parameters $c_2,c_3, d_2,d_3$ such that:
\begin{itemize}
    \item $b_2 = c_2d_2$ and $b_3 = c_3d_3$;
    \item $c_2c_3\mid \ss  b_1^2$; 
    \item $d_3 \mid a_2$ and $d_2\mid a_3$.
\end{itemize}

Note that the parametrisation of $c_2$, $c_3$, $d_2$, and $d_3$ may not be uniquely defined, but this non-uniqueness may only increase the size of the final  bound. 

Next, we count the  sextuples $(a_1,a_2,a_3,b_1,b_2,b_3) \in [-H,H]^6$  for which there are the corresponding $c_2$, $c_3$, $d_2$, and $d_3$ in prescribed dyadic intervals. 

 More precisely, we fix some $C_2$, $C_3$, $D_2$, and $D_3$ with  
 \[
 1 \le C_2, C_3, D_2, D_3 \le H \mand C_2C_3D_2D_3 \le H^2,\] 
 and
 count the number $\fP(C_2,C_3,D_2, D_3)$ of elements of $\cP_2(\mathbb Q;H,f)$ with the additional constraints 
\begin{equation}
\label{eq:d2d3}
 c_i \in [C_i,2C_i], \qquad d_i \in [D_i, 2 D_i],   \qquad i =2,3. 
\end{equation} 
 
Clearly 
\begin{equation}
\label{eq:PPD2D3}
\# \cP_2(\mathbb Q;H,f) \ll  (\log H)^4 \max_{1 \le C_2,C_3,D_2,D_3\le H} \fP(C_2,C_3,D_2, D_3),
\end{equation}
so we may proceed by bounding $\fP(C_2,C_3,D_2,D_3)$.

We begin by fixing $c_2$ and $c_3$; we now show that, perhaps quite surprisingly, there are at most $\sqrt{C_2C_3}H^{o(1)}$ ways to do so.

\begin{lemma}\label{lem:SqrtBoundC2C3}
For any $C_2,C_3$ with $1 \le C_2,C_3 \le H$, there are at most $\sqrt{C_2C_3}H^{o(1)}$ possibilities for the pair $\(c_2, c_3\)$ satisfying~\eqref{eq:d2d3} and consistent with a solution to~\eqref{eq:charpolx}.
\end{lemma}

\begin{proof} 
By definition, for any choice of parameters $c_2,c_3,b_1$, we have $c_2c_3\mid \ss b_1^2$.

 We can parametrise further by writing $c_2 = c_{2,s}g_2$ and $c_3 = c_{3,s} g_3$, where 
\[c_{2,s}c_{3,s}\mid \ss, \mand g_2g_3 \mid b_1^2.
\] 

Furthermore, let $c$ denote the smallest positive integer such that $g_2g_3\mid  c^2$. Note that the minimality of $c$ immediately implies that 
\[
c \mid b_1 \mand c \mid g_2g_3.
\]
In particular, $c \le g_2g_3 \ll C_2 C_3$.

Begin by fixing $c_{2,s}$ and $c_{3,s}$; since these are both factors of $\ss $, 
there are at most 
\[
\fC_{2,3} = H^{o(1)}
\] 
choices for each. It may be the case that $g_2$ and $g_3$ are not relatively prime to $\ss $; write $g_2 = g_{2,s} h_2$ and $g_3 = g_{3,s}h_3$, where we can assume that $g_{2,s}$ and $g_{3,s}$ are supported only on prime divisors of 
$\ss $  and that $\gcd(h_2h_3,\ss ) = 1$. 

Now fix $g_{2,s}$ and $g_{3,s}$. Since these are supported only on prime divisors of 
$\ss $, so, by Lemma~\ref{lem: Sunits},   there are at most 
\[
\fG_{2,3}= H^{o(1)}
\] 
 choices for them. 

Similarly we write $c =   d h$ where $ d \mid g_{2,s}g_{3,s}$ and $\gcd(h,\ss ) = 1$; once again, there are at most 
\[
\fD = H^{o(1)}
\] 
choices for $d$, so we can fix one such choice. Note that by the above restriction 
$\gcd(h_2h_3,\ss ) = 1$, we have $h_2h_3 \mid h^2$. 

It remains to bound the number of possibilities for $h_2$ and $h_3$ by $\sqrt{C_2C_3}$. 

We now consider the equation~\eqref{eq:rational-quadratic-charpol} and carefully clear denominators. Multiplying both sides by $\ss g_2g_3 b_1/c$, we get that
\begin{equation}
\label{eq:4-Terms}
\frac{a_1r_1s_0g_2g_3}{c} - \frac{a_1^2\ss g_2g_3}{cb_1} - \frac{a_2a_3\ss g_2g_3b_1}{b_2b_3c} = \frac{r_0s_1g_2g_3 b_1}{c}.
\end{equation}
Since $c \mid g_2g_3 \mid b_1$,  we obviously have 
\[
\frac{a_1r_1s_0g_2g_3}{c} ,\,  \frac{ r_0s_1g_2g_3 b_1}{c}  \in \Z.
\] 

Finally, upon expanding $b_2b_3=c_2d_2c_3d_3$ and $c_2c_3 = c_{2,s}g_2c_{3,s}g_3$, the third term can be rewritten as
\[
\frac{a_2a_3\ss g_2g_3b_1}{b_2b_3c} = \left(\frac{a_2}{d_3}\right)\left(\frac{a_3}{d_2}\right) \left(\frac{\ss }{c_{2,s}c_{3,s}}\right)\left(\frac{b_1}{c}\right) \in \Z,
\]
Thus three terms in the equations~\eqref{eq:4-Terms} are integers, 
and hence so is the remaining  term
\[
\frac{a_1^2\ss g_2g_3}{cb_1} \in \Z.
\] 
By our assumption we have $\gcd(a_1,b_1) =1$, and thus  $\gcd(a_1,c)=1$, so in turn we have 
\[
\frac{\ss g_2g_3}{cb_1} \in \Z.
\] We now rewrite
\[
\frac{\ss g_2g_3}{cb_1} = \frac{\ss g_{2,s}g_{3,s} \cdot h_2h_3}{d ^2 h^2  \(b_1/c\)} \in \Z.
\]

Since $\gcd(h, \ss g_{2,s}g_{3,s})=1$, we thus have $h^2 \mid h_2h_3$. But we also have $h_2h_3 \mid h^2$, so $h^2 = h_2h_3$. In particular, the product $h_2h_3$ must always be a perfect square. Since $h_2h_3 \le c_2c_3 \ll C_2C_3$ and is a perfect square, there are 
$O\(\sqrt{C_2C_3}\)$ choices for $h_2h_3$, and thus by Lemma~\ref{lem:div} there are at most
 \[
 \fH_{2,s} \le \sqrt{C_2C_3} H^{o(1)} 
 \] 
 possibilities for $h_2$ and $h_3$.

A choice of $h_2$ and $h_3$, along with the previous choices of $g_{2,s}, g_{3,s}, c_{2,s}$, and $c_{3,s}$, determine $c_2 = c_{2,s}g_{2,s}h_2$ and $c_3 = c_{3,s}g_{3,s}h_3$. Thus, there are altogether at most  
\[
\fC_{2,3} \fD  \fG \fH_{2,3} \le \sqrt{C_2C_3} H^{o(1)}
\]  possibilities for $(c_2,c_3)$, as desired.
\end{proof}

Now that we see from  Lemma~\ref{lem:SqrtBoundC2C3}   that  $\(c_2, c_3\)$ can be fixed in at most $\sqrt{C_2C_3}H^{o(1)}$ ways, we fix positive integers $d_2$ and $d_3$ that satisfy~\eqref{eq:d2d3}. 
Note that $b_2=c_2d_2$, $b_3 = c_3 d_3$, and $c$ are now determined as well. 
    
We now look at the equation~\eqref{eq:charpol} as a quadratic equation with respect to 
$a_1$, which must have an integer root. This means that its discriminant 
\[
\Delta = r_1^2s_0^2b_1^2b_2^2b_3^2 - 4\ss b_2b_3\(\ss b_1^2a_2a_3 + r_0s_1b_1^2b_2b_3\) 
\]
must be a perfect square.
Recalling that $c\mid b_1$ and writing
\[
\Delta  = \left(\frac{b_1}{c}\right)^2b_2^2b_3^2 R
\]
with 
\[ R = r_1^2 s_0^2 c^2 -  4\frac{s_0^2s_1^2c^2}{c_2c_3} \frac{a_2}{d_3}\frac{a_3}{d_2} - 4r_0 \ss s_1 c^2,  
\]
we see that $R$ must also be a perfect integer square. 

We now observe that $R$ belongs to the arithmetic progression
\[
R  \equiv A \pmod{q}, 
\]   
where 
\[
A= r_1^2 s_0^2 c^2- 4r_0 \ss s_1 c^2 \mand q =  4\frac{s_0^2s_1^2c^2}{c_2c_3}
\]
 are fixed. 
 Moreover, we see that $R = A+kq$ for some
 \[
 |k| \le H^2/(d_2 d_3),
 \]
and hence, by  Lemma~\ref{lem:Rudin}, $R$ can take at most $H^{1+o(1)}(d_2 d_3)^{-1/2}$ values.
When $R$ is fixed the product $a_2a_3$ is uniquely defined for given $d_2$ and $d_3$.
Hence, by Lemma~\ref{lem:div} we have  at most $H^{1+o(1)}(d_2 d_3)^{-1/2}$
choices for $(a_2,a_3)$. 
Summing over all admissible $d_2$ and $d_3$ we obtain 
\begin{equation}
\label{eq:a2a3}
 \sum_{D_2\le d_2\le 2D_2}  \sum_{D_3\le d_3\le 2D_3} 
\frac{H^{1+o(1)}}{(d_2 d_3)^{1/2} }  = H^{1+o(1)}(D_2 D_3)^{1/2}
\end{equation}
choices for $(a_2,a_3)$.

When $(a_2,a_3,b_2,b_3)$ are fixed, then $a_1/b_1$ is the root of a rational quadratic polynomial, from~\eqref{eq:rational-quadratic-charpol}, so $a_1$ and $b_1$ are uniquely defined (up to a factor of $2$). Then $a_4$ and $b_4$ are uniquely defined from~\eqref{eq:charpolx}. Hence from Lemma~\ref{lem:SqrtBoundC2C3} and~\eqref{eq:a2a3} we derive 
the bound
\begin{equation}
\label{eq:BoundWithC2C3D2D3}
 \fP(C_2,C_3,D_2, D_3,) \le (C_2C_3)^{1/2}H^{1+o(1)}(D_2 D_3)^{1/2}.
\end{equation}
Recall that $C_2C_3D_2D_3 \le H^2$; together with~\eqref{eq:BoundWithC2C3D2D3} we have 
\begin{equation*}
\fP(C_2,C_3,D_2,D_3) \le H^{2+o(1)},
\end{equation*}
which together with~\eqref{eq:PPD2D3} concludes the proof.

\subsection{The case $n\ge 3$}
We count matrices $A \in \cP_n(\Q;f)$, and thus, in particular,  matrices $A=(a_{i,j}/b_{i,j})_{1\le i,j\le n}$, $a_{i,j}/b_{i,j}\in\cF(H)$, for which $\Tr A$ and $\Tr A^2$ are fixed. Indeed, $\Tr A$ is defined by the coefficient of $X^{n-1}$ in $f$ and $\Tr A^2$ is defined by the coefficient of $X^{n-2}$ in $f$ (via the Newton identities). 

Let $\Tr A=r_1/s_1$ be fixed, with $\gcd(r_1,s_1)=1$ and $s_1\ne 0$, and after clearing denominators, we obtain the linear equation
\[
s_1\frac{a_{1,1}}{b_{1,1}}+\cdots+s_1\frac{a_{n,n}}{b_{n,n}}=r_1
\]
in variables $a_{i,i}/b_{i,i}\in\cF(H)$, $i=1,\ldots,n$, with fixed $r_1,s_1\in\Z$, of size
\[
|r_1|, |s_1|\le H^{O(1)}.
\]
Applying Lemma~\ref{lem:aixi-F}, we obtain that there are at most
\begin{equation}
\label{eq:tr bound}
H^{n+o(1)}
\end{equation}
solutions $(a_{1,1}/b_{1,1},\ldots,a_{n,n}/b_{n,n})\in\cF(H)^n$. We fix one such choice.

We turn now our attention to $\Tr A^2$, which is fixed. Since
\[
\Tr A^2=\sum_{i=1}^n \sum_{j=1}^n \frac{a_{i,j}}{b_{i,j}}\cdot \frac{a_{j,i}}{b_{j,i}}
\]
and $a_{1,1}/b_{1,1},\ldots,a_{n,n}/b_{n,n}$ and $\Tr A^2$ are fixed, after clearing denominators, we obtain the equation
\begin{equation}
\label{eq:tr2 eq}
\sum_{1\le i<j\le n}s_2\frac{a_{i,j}}{b_{i,j}}\cdot \frac{a_{j,i}}{b_{j,i}}=r_2,
\end{equation}
with fixed $r_2,s_2\in\Z$, $s_2\ne 0$ (since it divides the product of the denominators of $\Tr A^2$ and $a_{1,1}/b_{1,1},\ldots,a_{n,n}/b_{n,n}$), of size
\[
|r_2|,|s_2|\le H^{O(1)}.
\]
For $1\le i<j\le n$, denote
\begin{equation}
\label{eq:abij}
A_{i,j}=a_{i,j}a_{j,i},\quad B_{i,j}=b_{i,j}b_{j,i},\quad d_{i,j}=\gcd(A_{i,j},B_{i,j}).
\end{equation}
Therefore, we can write
\[
A_{i,j}=d_{i,j}C_{i,j},\quad B_{i,j}=d_{i,j}D_{i,j},\quad \gcd(C_{i,j},D_{i,j})=1.
\]
Equation~\eqref{eq:tr2 eq} becomes
\begin{equation}
\label{eq:tr2 eq1}
\sum_{1\le i<j\le n}s_2\frac{C_{i,j}}{D_{i,j}}=r_2,
\end{equation}
in $n(n-1)/2$ variables $C_{i,j}/D_{i,j}\in\cF(H^2/d_{i,j})$, $1\le i<j\le n$. For fixed $d_{i,j}$, $1\le i<j\le n$, applying again Lemma~\ref{lem:aixi-F}, we obtain that there are at most
\[
H^{2\cdot \frac{n(n-1)}{2}+o(1)}\prod_{1\le i<j\le n}1/d_{i,j}=H^{n^2-n+o(1)}\prod_{1\le i<j\le n}1/d_{i,j}
\]
solutions $C_{i,j}/D_{i,j}\in\cF(H^2/d_{i,j})$, $1\le i<j\le n$, and thus the same number of solutions $A_{i,j}/B_{i,j}\in\cF(H^2)$, $1\le i<j\le n$.
 
 Since, by~\eqref{eq:abij}, we have
\[
 a_{i,j},a_{j,i}\mid A_{i,j}\mand b_{i,j},b_{j,i}\mid B_{i,j},
\]
 applying Lemma~\ref{lem:div}, we conclude that for each choice of $A_{i,j}$ and $B_{i,j}$, there are at most $H^{o(1)}$ possibilities for the entries $a_{i,j}/b_{i,j},a_{j,i}/b_{j,i}\in\cF(H)$. Therefore, for fixed $d_{i,j}$, $1\le i<j\le n$, the number of entries $a_{i,j}/b_{i,j}$, $i\ne j$, is at most 
\[
H^{n^2-n+o(1)}\prod_{1\le i<j\le n}1/d_{i,j}.
\]
Therefore, the number of entries $a_{i,j}/b_{i,j}$, $i\ne j$, is at most 
 \begin{equation}
\label{eq:tr2 bound}
\begin{split}
\ssum_{1\le d_{1,2}, \ldots, d_{n-1,n}\le H^2}  & H^{n^2-n+o(1)}\prod_{1\le i<j\le n}1/d_{i,j}\\
& \qquad  \qquad = H^{n^2-n+o(1)}\prod_{1\le i<j\le n} \sum_{1\le d_{i,j}\le H^2} 1/d_{i,j}\\
&\qquad\qquad \le H^{n^2-n+o(1)}(\log H)^{(n^2-n)/2}\\
&\qquad\qquad\qquad =H^{n^2-n+o(1)}, 
\end{split}
\end{equation}
where the sum on the left hand side is over all $n(n-1)$ dimensional integer vectors 
$ \(d_{i,j}\)_{1\le i<j\le n}$ of size at most $H$. 

 Putting~\eqref{eq:tr bound} and~\eqref{eq:tr2 bound} together, we obtain at most
 $H^{n^2+o(1)}$
 matrices $A\in\cP_n(\Q;H)$, which concludes the proof of Theorem~\ref{thm:pnhf}.
 
  \section{Proof of Theorem~\ref{thm:rank 1/Z}}

 \subsection{The case $r=1$}  Let 
 \[A=(1/a_{i,j})_{\substack{1\le i \le m\\1\le j \le n}}\in \cM_{m,n}(\Z^{-1},H)
 \]
 be a matrix with rank 1.  Since $\rank A=1$, all $2\times 2$ submatrices of $A$ are singular.
 
 We first fix the entries $a_{i,i}$ for $i=1,\dotsc,m$, and consider the $2\times 2$ submatrices of $A$ of the form 
 \[
     \begin{pmatrix}
         1/a_{i_1,  i_1}&1/a_{i_1,i_2}\\
         1/a_{i_2,i_1}&1/a_{i_2,i_2}
     \end{pmatrix}
\]
 for $1\le i_1<i_2\le m$. Since this matrix is singular, we have that, from calculating its determinant, 
 \[
     a_{i_1,i_1}a_{i_2,i_2}=a_{i_1,i_2}a_{i_2,i_1}.
\]
 The left-hand side of this equation is fixed. Hence, by Lemma~\ref{lem:div}, there are $\tau(a_{i_1,i_1}a_{i_2,i_2}) = H^{o(1)}$ choices for $(a_{i_1,i_2},a_{i_2,i_1})$. Repeating this argument for all choices of $1\le i_1 <i_2\le m$, we see that there are at most $H^{m+o(1)}$ possible choices for the entries $a_{i,j}$, with $1\le i,j \le m$.

 Now, we fix such a choice of $a_{i,j}$, for $1\le i,j \le m$. We further fix the entries $a_{1,j}$ for $m+1\le j \le n$ in $H^{n-m}$ ways. We now consider the submatrix
 \[
     \begin{pmatrix}
         a_{1,1}&a_{1,j}\\
         a_{i, 1}&a_{i,j}
     \end{pmatrix} 
\]
 for  $1\le i \le m$ and $m+1\le j \le n$.  
 This matrix is singular. Then, since the entries $a_{1,1}$, $a_{1,j}$ and $a_{i,1}$ are already fixed we see that $a_{i,j}$ is uniquely defined.

 From our argument, we see that for fixed entries $a_{i,i}$ and $a_{1,j}$ of $A$, with $1\le i\le m$ and $m + 1\le j \le n$, there are at most $H^{o(1)}$ possible choices for the other entries of $A$. Hence, there are at most $H^{n+o(1)}$ possible matrices $A \in \cM_{m,n}\(\Z^{-1};H\)$ such that $\rank A=1$. This completes our proof in this case.

\subsection{The case $r=2$} We first remark that in the proof we use a bound 
of Theorem~\ref{thm:cdnE} on  $\#\cD_2\(\Z^{-1};H,0\)$ and $\#\cD_3\(\Z^{-1};H,0\)$. However one easily checks
that this result is independent of our bound on $\#\cL_{m,n, 2}$. So there is no circular 
reasoning here. 

 Let 
 \[A=(1/a_{i,j})_{\substack{1\le i \le m\\1\le j \le n}}\in \cL_{m,n,2}\(\Z^{-1};H\).
 \]
  Since $\rank A=2$, all $3\times 3$ submatrices of $A$ are singular. However, there exists a $2\times 2$ submatrix $A$ that is not singular. Without loss of generality, we may suppose that the submatrix \begin{equation}\label{eq:matrix}
    \begin{pmatrix}
        1/a_{1,1}& 1/a_{1,2}\\
        1/a_{2,1}& 1/a_{2,2}
    \end{pmatrix}
\end{equation}  
is not singular.

We note that if we fix the first two rows and columns of $A$, for all $3\le i \le m$, $m+1\le j\le n$, the matrix \[
    \begin{pmatrix}
        1/a_{1,1}& 1/a_{1,2}&  1/a_{1,j}\\
        1/a_{2,1}& 1/a_{2,2}& 1/ a_{2,j}\\
        1/a_{i,1}& 1/a_{i,2}&  1/a_{i,j}
    \end{pmatrix}
\] 
has determinant zero. Note that in the above matrix all elements except for $1/a_{i,j} \in \cF(H)$ 
are fixed. Hence, there exists at most one possible choice of $1/a_{i,j} \in \cF(H)$ for $3\le i \le m$,  $3\le j\le n$. 

Note that we only use the above argument with $i,j\ge 4$ and examine more carefully the 
number of possible choices for the first three rows and columns of $A$, which 
by the above observation gives us a bound on $\# \cL_{m,n,2}\(\Z^{-1};H\)$. 

Next, we divide the counting of $A \in \cL_{m,n,2}\(\Z^{-1};H\)$ based on the structure of the $2\times 2$ submatrices  
\[
    \begin{pmatrix}
        1/a_{1,k}&1/a_{1,j}\\
        1/a_{2,k}&1/a_{2,j}
    \end{pmatrix} \mand  \begin{pmatrix}
        1/a_{k,1}&1/a_{k,2}\\
        1/a_{i,1}&1/a_{i,2}
 \end{pmatrix}
\]
for $k=1,2$ and $3\le i \le m$, $3\le j\le n$. 

We say that a matrix $A \in \cL_{m,n,2}\(\Z^{-1};H\)$  is \textit{row-good} if there exists an index $3\le j \le n$ such that 
\[
    \begin{pmatrix}
        1/a_{1,1}&1/a_{1,j}\\
        1/a_{2,1}&1/a_{2,j}
    \end{pmatrix} \mand  \begin{pmatrix}
        1/a_{1,2}&1/a_{1,j}\\
        1/a_{2,2}&1/a_{2,j}
    \end{pmatrix}
\]
 are not singular.  Otherwise we says that the matrix $A$ is \textit{row-bad}.

Similarly, we say that $A \in \cL_{m,n,2}\(\Z^{-1};H\)$ is \textit{column-good} or \textit{column-bad} depending on whether  there exists an index $3\le i\le m$ such that both 
\[
    \begin{pmatrix}
        1/a_{1,1}&1/a_{1,2}\\
        1/a_{i,1}&1/a_{i,2}
    \end{pmatrix} \mand  \begin{pmatrix}
        1/a_{2,1}&1/a_{2,2}\\
        1/a_{i,1}&1/a_{i,2}
    \end{pmatrix}
\]
are not singular.

We first prove that if a matrix is row-bad, then  for each $j$, $3\le j\le n$, exactly one of the submatrices 
\[
    \begin{pmatrix}
        1/a_{1,1}&1/a_{1,j}\\
        1/a_{2,1}&1/a_{2,j}
    \end{pmatrix} \mand  \begin{pmatrix}
        1/a_{1,2}&1/a_{1,j}\\
        1/a_{2,2}&1/a_{2,j}
    \end{pmatrix}
\]
is singular. Assume that both submatrices are singular, then by the determinant formula we have
 \[
        a_{1,1}a_{2,j}=a_{1,j}a_{2,1}\mand
        a_{1,2}a_{2,j}=a_{1,j}a_{2,2},
\]
and thus
\[
        a_{1,1}a_{2,2}=a_{1,2}a_{2,1},
\]
which contradicts the nonsingularity of~\eqref{eq:matrix}. This completes the proof. The same argument can be used to prove that in a column-bad matrix, for each $3\le i\le m$, exactly one of the submatrices
 \[
    \begin{pmatrix}
        1/a_{1,1}&1/a_{1,2}\\
        1/a_{i,1}&1/a_{i,2}
    \end{pmatrix} \mand  \begin{pmatrix}
        1/a_{2,1}&1/a_{2,2}\\
        1/a_{i,1}&1/a_{i,2}
    \end{pmatrix}
\]
is singular.

We now divide the cases based on whether the matrix $A$ is row-good or column-good. Denote $L_{GG}$, $L_{GB}$, $L_{BG}$ and $L_{BB}$ as the number of matrices $A \in \cL_{m,n,2}\(\Z^{-1};H\)$ which are row-good and column-good, row-good and column-bad, row-bad and column-good, and row-bad and column-bad, respectively.

We first bound $L_{GG}$. 
Without loss of generality (by permutations), we may assume that the matrices 
\[
    \begin{pmatrix}
        1/a_{1,k}&1/a_{1,3}\\
        1/a_{2,k}&1/a_{2,3}
    \end{pmatrix} \mand  \begin{pmatrix}
        1/a_{k,1}&1/a_{k,2}\\
        1/a_{3,1}&1/a_{3,2}
    \end{pmatrix}
\] 
are not singular, for $k=1,2$. We now consider the  submatrix 
\begin{equation}\label{eq:3x3}\begin{pmatrix}
        1/a_{1,1}& 1/a_{1,2}&  1/a_{1,3}\\
        1/a_{2,1}& 1/a_{2,2}&  1/a_{2,3}\\
        1/a_{3,1}& 1/a_{3,2}&  1/a_{3,3}
    \end{pmatrix}. \end{equation}

By Theorem~\ref{thm:cdnE}, since this matrix is singular, there are at most $H^{7+o(1)}$ possible choices for the entries of this submatrix. We fix one such choice.

Next, we consider the matrix 
\begin{equation}\label{eq:12i}
\begin{pmatrix}
        1/a_{1,1}& 1/a_{1,2}&  1/a_{1,3}\\
        1/a_{2,1}& 1/a_{2,2}& 1/a_{2,3}\\
        1/a_{i,1}& 1/a_{i,2}&  1/a_{i,3}
    \end{pmatrix}
 \end{equation} 
for $4\le i\le m$. We note that the first two rows of this matrix are already fixed. Since this matrix is singular, by Laplace expansion, we have an equation of the form 
\begin{equation}\label{eq:Q1Q2Q3}
\frac{Q_1}{a_{i,1}}+
\frac{Q_2}{a_{i,2}}+
\frac{Q_3}{a_{i,3}}=0,
\end{equation} 
where all of $Q_1$, $Q_2$, and $Q_3$ are not zero since $A$ is row-good.
Using Lemma~\ref{lem:aixi-Z n=2,3}, there are at most $H^{1+o(1)}$ possible choices of $(1/a_{i,1}$, $1/a_{i,2},1/a_{i,3})$. Therefore, there are at most  $H^{m-3+o(1)}$
 possible choices for $(1/a_{i,1}$, $1/a_{i,2}$,   $1/a_{i,3})$ for all $4\le i \le m$. 

The same argument shows that there are  at most  $H^{n-3+o(1)}$ possible choices for $(1/a_{1,j}$, $1/a_{2,j}$,   $1/a_{3,j})$ for all $4\le j \le n$. 

Multiplying all bounds, we see that there are at most 
\begin{equation}\label{eqn:g1}
L_{GG} \le H^{7+o(1)}\cdot H^{m-3+o(1)}\cdot H^{n-3+o(1)} = H^{m+n+1+o(1)}.
\end{equation} 
matrices $A \in \cL_{m,n,2}\(\Z^{-1};H\)$ which are both row-good and column-good.
.
Next, we  bound $L_{GB}$, the number of row-good and column-bad 
matrices $A \in \cL_{m,n,2}\(\Z^{-1};H\)$. We first bound the number of possible entries in the matrix~\eqref{eq:3x3}.

Since $A$ is column-bad, for each $3\le i 
\le m$, exactly one of the submatrices
\[
    \begin{pmatrix}
        1/a_{1,1}&1/a_{1,2}\\
        1/a_{i,1}&1/a_{i,2}
    \end{pmatrix} \mand  \begin{pmatrix}
        1/a_{2,1}&1/a_{2,2}\\
        1/a_{i,1}&1/a_{i,2}
    \end{pmatrix}
\]
is singular. Suppose that the matrix 
\[
\begin{pmatrix}
        1/a_{2,1}&   1/a_{2,2}\\
        1/a_{3,1}&   1/a_{3,2}
\end{pmatrix}
 \]
 is singular. Using Theorem~\ref{thm:cdnE}, we see that there are at most $H^{2+o(1)}$ ways of choosing the entries of this submatrix. Next, there are $O(H^2)$ ways of choosing $(1/a_{1,1},1/a_{1,2})$. This concludes the argument of the bound for the first two columns of the matrices~\eqref{eq:3x3}.

    It remains to bound the number of possible entries for the last column of~\eqref{eq:3x3}. To do this, we first note that, since $A$ is row-good, we may assume that both submatrices 
 \[
    \begin{pmatrix}
        1/a_{1,1}&1/a_{1,3}\\
        1/a_{2,1}&1/a_{2,3}
    \end{pmatrix} \mand  \begin{pmatrix}
        1/a_{1,2}&1/a_{1,3}\\
        1/a_{2,2}&1/a_{2,3}
    \end{pmatrix}
\] 
are nonsingular. By applying Laplace expansion to its last column, we have an equation of the form 
 \[
        \frac{Q_2}{a_{2,3}}+\frac{Q_3}{a_{3,3}}=0,
\]
where $Q_2,Q_3\neq 0$. By Lemma~\ref{lem:aixi-Z n=2,3} we see that this equation has at most $H^{1+o(1)}$ solutions $(1/a_{2,3},1/a_{3,3}) \in \cF(H)^2$. Since there are $O(H)$ possible entries for $1/a_{1,3}$ such that~\eqref{eq:3x3} is row-good, we conclude that there are at most $H^{6+o(1)}$ possible  choices of the matrix~\eqref{eq:3x3}.

    We now proceed with bounding the number of possible entries of $(1/a_{1,j},1/a_{2,j},1/a_{3,j})$ for $4\le j\le n$. To do this, we consider the matrix
 \begin{align*}\begin{pmatrix}
        1/a_{1,1}& 1/a_{1,2}&  1/a_{1,j}\\
        1/a_{2,1}& 1/a_{2,2}&  1/a_{2,j}\\
        1/a_{3,1}& 1/a_{3,2}&  1/a_{3,j}
    \end{pmatrix} \end{align*}
 and repeating our previous argument on bounding the number of possible choices of $(1/a_{1,3},1/a_{2,3},1/a_{3,3})$, we have that there are at most $H^{2+o(1)}$ possible choices of $(1/a_{1,j},1/a_{2,j},1/a_{3,j})$, for each $4\le j\le n$. Hence, 
 the total number of choices for $n-3$ triples  $(1/a_{1,j},1/a_{2,j},1/a_{3,j})$,  $4\le j\le n$, is  at most $ H^{2(n-3)+o(1)}$.

Finally, we bound the number of possible choices for the triples $(1/a_{i,1},1/a_{i,2},1/a_{i,3})$ for $4\le i\le m$.  We now consider the matrix~\eqref{eq:12i} 
for $4\le i \le m$ and get  the equation~\eqref{eq:Q1Q2Q3}, since $A$ is row-good. Thus, following the same argument used in bounding $L_{GG}$, we see that there are at most $H^{m-3+o(1)}$ possibilities of choosing  $(1/a_{i,1}$, $1/a_{i,2}$,   $1/a_{i,3})$, for all $4\le i \le n$.

Multiplying the above bounds we see that there are at most \begin{equation}\label{eqn:g2}
L_{GB} \le H^{6+o(1)} \cdot H^{2(n-3)+o(1)}\cdot  H^{m-3+o(1)} = H^{m+2n-3+o(1)}
\end{equation} 
matrices in $\cL_{m,n,2}\(\Z^{-1};H\)$ which are row-good and column-bad.

Next, we bound $L_{BG}$. We follow the exact same arguments as our bounds on $L_{GB}$, the only difference is that we swap the ``row" and ``column" in this case. Using this, we have 
 \begin{equation}\label{eqn:g3}
L_{BG} \le H^{6+o(1)} \cdot H^{2(m-3)+o(1)}\cdot  H^{n-3+o(1)} = H^{2m+n-3+o(1)}.
\end{equation} matrices in $\cL_{m,n,2}\(\Z^{-1};H\)$ which are row-bad and column-good.

Finally, we  bound $L_{BB}$. To do this, we first fix the matrix 
\[
   \begin{pmatrix}
        1/a_{1,1}&1/a_{1,2}\\
1/a_{2,1}&1/a_{2,2}
\end{pmatrix}
\]
in $O(H^4)$ ways and  consider the matrix
 \begin{equation}\label{eq:3x3pq}
   \begin{pmatrix}
        1/a_{1,1}&1/a_{1,2}&1/a_{1,j}\\
    1/a_{2,1}&1/a_{2,2}&1/a_{2,j}\\
    1/a_{i,1}&1/a_{i,2}&1/a_{i,j}
   \end{pmatrix}
\end{equation} for $3\le i\le m$, $3\le j\le n$. Without loss of generality, we may assume that the matrices 
\[
    \begin{pmatrix}
        1/a_{1,2}&1/a_{1,j}\\
1/a_{2,2}&1/a_{2,j}
    \end{pmatrix} \mand  \begin{pmatrix}
        1/a_{2,1}&1/a_{2,2}\\
    1/a_{i,1}&1/a_{i,2}
    \end{pmatrix}
\] are singular. We see that the last statement is equivalent to the system of equations 
\[
    a_{1,2}a_{2,j}=a_{2,2}a_{1,j}\mand
    a_{2,1}a_{i,2}=a_{2,2}a_{i,1}.
\]
We fix $a_{1,j}$ and $a_{i,1}$ in $O(H)$ ways each, and recalling that  $a_{2,2}$ has already been fixed, we see that from Lemma~\ref{lem:div}, there are $H^{o(1)}$ possibilities for $a_{2,j}$ and $a_{i,2}$ each. And since the matrix in~\eqref{eq:3x3pq} is singular, there is at most one possibility for the entry $1/a_{i,j}$ for each $3\le i \le m$, $3\le j \le n$. 

Repeating this argument for each $i$ and $j$ with $3\le i\le m$, $3\le j\le n$, we see that there are at most \begin{equation}\label{eqn:g4}
L_{BB} \le H^{4}\cdot H^{m-2+o(1)} \cdot H^{n-2+o(1)}=H^{m+n+o(1)}
\end{equation} 
matrices in  $\cL_{m,n,2}\(\Z^{-1};H\)$ which are row-bad and column-bad.

Putting~\eqref{eqn:g1}, \eqref{eqn:g2}, \eqref{eqn:g3} and~\eqref{eqn:g4} together, we have that \begin{align*}
    \#\cL_{m,n,2}\(\Z^{-1};H\) &\ll  L_{GG}+L_{GB}+ L_{BG} + L_{BB}\\
     &\le H^{m+n+1+o(1)}+ H^{2n+m-3+o(1)}\\
     & \qquad \qquad + H^{2m+n-3+o(1)}+ H^{m+n+o(1)} \\
    &= \begin{cases}
        H^{7+o(1)},&\text{ if }(m,n)=(3,3),\\
        H^{2n+m-3+o(1)},&\text{ if }(m,n)\ne (3,3).
    \end{cases}
\end{align*} This completes the proof for the case $r=2$.

\subsection{The case $r\ge 3$} In this case, we proceed similarly with the proof of Theorem~\ref{thm:rank Q}, with the main difference being that we work with the set $\cE(H)$ of Egyptian fractions of height at most $H$, instead of the set of Farey fractions $\cF(H)$. 

Indeed, following the proof of  Theorem~\ref{thm:rank Q} line by line, and using the same notation, we estimate the number $\#\cL_{m,n, r}^*(\Z^{-1};H)$ of matrices $A \in \cM_{m,n}\(\Z^{-1};H\)$, which are of rank $r$ and such that the top left $r \times r$ minor $A_r = \(a_{i,j}\)_{1\le i,j\le r} $ is non-singular. We obtain the analogue bounds of~\eqref{eq:contrib Xr} and~\eqref{eq:contrib Yr-t} given by
\begin{equation}
\label{eq:contrib Xr E}
\cI \ll H^{r^2} 
\end{equation} 
and 
\begin{equation}
\label{eq:contrib Yr-t E}
\cJ_t \ll H^{t(m-r)},
\end{equation}
and the analogue of~\eqref{eq:lin rel} given by
\[
\frac{\rho_{1}(h)}{a_{1,j}}  + \ldots  + \frac{\rho_{r}(h)}{a_{r,j}} - \frac{1}{a_{h,j}} =0
\]
with exactly $t+1$ non-zero coefficients and $1/a_{i,j},\ldots ,1/a_{r,j},1/a_{h,j}\in \cE(H)$. To count the number of solutions to this equation, we continue following the proof of  Theorem~\ref{thm:rank Q}, where instead of Lemma~\ref{lem:aixi-F} we use Lemma~\ref{lem:aixi-Z}.

Thus, in this case we obtain
\begin{itemize}
\item  $ \cK_t \ll H\# \cE(H)^{r-1} \ll H^{r}$  if $t=1$, 
\item  $ \cK_t \le H^{1+o(1)}\# \cE(H)^{r-2} \le H^{r-1+o(1)}$  if $t=2$,  
\item   $ \cK_t \le  H^{(t+1)/2+o(1)} \#\cE(H)^{r-t}  \le  H^{r - t/2+1/2+o(1)}$,  which follows from 
Lemma~\ref{lem:aixi-Z}  if $  t \ge 3$.  
\end{itemize}

Now we are ready to derive an upper bound on $\#\cL_{m,n,r}^*(\Z^{-1};H)$ from these arguments. From our definitions of $\cI$, $\cJ_t$, and $\cK_t$, we have
\begin{equation}
\label{eq: L vs ABC E}
\#\cL_{m,n, r}^*(\Z^{-1};H)   \ll \cI   \sum_{t =1}^r \cJ_t \cK_t^{n-r}.
\end{equation} 

We now analyse the contributions of each term of the right-hand side of~\eqref{eq: L vs ABC E}.
In particular, we recall the bounds~\eqref{eq:contrib Xr E} and~\eqref{eq:contrib Yr-t E}
as well as the above bounds on $\cK_t$.

For $t =1 $ the total contribution $\#\fL_{1}$ satisfies
\begin{equation}\label{eq:t = 1}
\#\fL_{1} \ll H^{r^2} H^{m-r} \(H^{r}\)^{n-r} = H^{nr+m-r}. 
\end{equation} 

For $t =2 $ the total contribution $\#\fL_{2}$ satisfies
\begin{equation}\label{eq:t = 2}
\#\fL_{2} \le H^{r^2}  H^{2(m-r)}\(H^{r-1+o(1)}\)^{n-r} = H^{nr+2m-n-r+o(1)}. 
\end{equation} 

 For $t \ge 3$ the total contribution $\#\fL_{\ge 3}$ satisfies 
\begin{equation}\label{eq:t ge 3}
\begin{split}
\#\fL_{\ge 3}&\le H^{r^2}\sum_{t =3}^r   H^{t(m-r)} \( H^{r - t/2 + 1/2+o(1)}\)^{n-r}  \\
& = H^{nr+n/2-r/2+o(1) }\sum_{t =3}^r  H^{t(m-n/2-r/2)}\\
& = H^{nr+n/2-r/2+o(1) } \( H^{3m-3n/2-3r/2}+ H^{mr-nr/2-r^2/2}\)\\
& = H^{nr+3m-n-2r+o(1) }  +  H^{(n-r)(r+1)/2+mr+o(1) }  .
\end{split} 
\end{equation}

Hence, we can now estimate $ \#\cL_{m,n, r}\(\Z^{-1};H\)$  by substituting the bounds~\eqref{eq:t = 1}, \eqref{eq:t = 2} and~\eqref{eq:t ge 3} in the inequality 
\[
 \#\cL_{m,n, r}\(\Z^{-1};H\)  \ll \#\cL_{m,n, r}^*(\Z^{-1};H) \le \#\fL_{1}   + \#\fL_{2}   
+ \#\fL_{\ge 3} , 
\] 
and recalling that $n\ge m\ge r \ge 3$, we see that 
\begin{align*}
 \#\cL_{m,n, r}&(\Z^{-1};H)\\
  &\ll H^{nr+m-r} +    H^{nr+3m-n-2r+o(1) }  +  H^{(n-r)(r+1)/2+mr+o(1) }.
	\end{align*}
 
 We then compare these terms.  We have that
\begin{align*}
&  H^{nr+3m-n-2r} /H^{nr+m-r} = H^{2m-n-r},\\
 &H^{(n-r)(r+1)/2+mr}/H^{nr+m-r} = H^{(r-1)(2m-n-r)/2},\\
&H^{(n-r)(r+1)/2+mr}/H^{nr+3m-n-2r } = H^{(r-3)(2m-n-r)/2}.
\end{align*}

Therefore, since $r\ge 3$, we see that
\[
     \#\cL_{m,n, r}\(\Z^{-1};H\)\ll \begin{cases}
 H^{(n-r)(r+1)/2+mr+o(1)},&\text{ if }2m\ge n+r,\\
 H^{nr+m-r},&\text{ if }2m< n+r.
     \end{cases} 
\]
This completes the proof of all $r$.

 \section{Proof of Theorem~\ref{thm:cdnE}}
 \subsection{The case $n=2$}  
 \label{sec:n=2E}
 Let $\delta=r/s$ be a rational number, with integers $r$ and $s\ge 1$ satisfying $\gcd(r,s)=1$.
 Let 
 \[A= \begin{pmatrix}
 	1/x_1 & 1/x_2 \\ 1/x_3 &1/x_4
 \end{pmatrix}\in \cM_2(\Z^{-1};H)
 \] be a $2\times 2$ matrix with $\det A= \delta$ and $|x_i|\le H$ for $i=1,2,3,4$. Applying the determinant formula, we have that \begin{equation}\label{eq:detZ-1}
\frac{1}{x_1x_4}-\frac{1}{x_2x_3} = \frac{r}{s}.
\end{equation}

If $\delta =0$, then the equation~\eqref{eq:detZ-1} is equivalent to 
\[
	x_1x_4=x_2x_3.
\] 
By fixing $x_2$ and $x_3$ in $O\(H^2\)$ ways, we see that, by the bound of Lemma~\ref{lem:div}, there are $H^{o(1)}$ ways of choosing $x_1$ and $x_4$ each. This completes the proof in the case $\delta=0$.

Now we proceed to the case of $\delta\neq 0$. After clearing denominators, we may rewrite the equation~\eqref{eq:detZ-1} as \begin{equation}\label{eq:Z-1}
(rx_2x_3+s)(rx_1x_4-s)=-s^2.
 \end{equation} Now, let $rx_1x_4-s=K$.  Then, as $r$ and $s$ are fixed, we see that 
 \[
 x_1x_4 \mid (K+s)/r.
\]
Applying Lemma~\ref{lem:div}, we conclude that there are $((K+s)/r)^{o(1)}\le H^{o(1)}$ possibilities for $x_1$ and $x_4$ each, for a fixed $K$.  
 
However, since $K\mid s^2$ from the equation~\eqref{eq:Z-1}, we may conclude that that there are at most $(s^2)^{o(1)}\le  H^{o(1)}$ possible values of $(x_1,x_4)$. Applying the same argument for the expression $rx_2x_3+s$, we conclude that there are at most  $H^{o(1)}$ possible values of $(x_1,x_2,x_3,x_4)$ that satisfy the equation~\eqref{eq:detZ-1}. This completes the proof.

 \begin{rem}
 For $n=2$, Theorem~\ref{thm:pnhfE}  gives an alternative proof since 
 the characteristic polynomial of $A\in \cD_2(\Z^{-1};H,0)$ is of form $X^2-  t X$, 
 with some $t$ of the form $t=1/x_1+1/x_4$,  $|x_1|,|x_4|\le H$.  Hence, 
    \begin{align*}
        \#\cD_2(\Z^{-1};H,0) =& \sum_{\substack{t=1/x_1+1/x_4\\
        |x_1|,|x_4|\le H}}\#\cP_2(\Z^{-1};H,X^2- t X)\\
        \le& H^{2}H^{o(1)}+H^{1+o(1)}=H^{2+o(1)},
    \end{align*} where the  term $H^{1+o(1)}$ comes from $\#\cP_2\(\Z^{-1};H,X^2\)$, which appears 
    for $O(H)$ choices of $(x_1,x_4)$.  
\end{rem}
  
 \subsection{The case $n=3$}   
  Consider a $3\times 3$ matrix $A=\(1/a_{i,j}\)_{i,j=1}^3$ with entries in $\cE(H)$ and determinant $\delta$.

As in Section~\ref{laplace}, let $M_{1,j}$ be  the minor of $A$ resulted by removing the first row and $j$-th column of $A$ 	
and let $L_j$ be the least common multiple of the denominators 
of all entries of $M_{1,j}$, $j =1, 2, 3$. Then for 
 	\[
 	L = \lcm[L_1, L_2, L_3] 
 	\]
we define
 \[
  Q_0 = L\delta \mand 	Q_j =  (-1)^{1+j}   L \det M_{1,j}, \qquad j=1, 2,3.
 	\]
Therefore, we have
\[
|Q_j|\le H^{O(1)},\quad j=0,1, 2,3.
\]
Fix the last two rows of $A$. Using the notations above, we have the equation
 \begin{equation}\label{eqn:Lap2E3}
 	\frac{Q_1}{a_{1,1}}+ \frac{Q_2}{a_{1,2}} + \frac{Q_3}{a_{1,3}}  = Q_0. 
 \end{equation}

 As in Section~\ref{laplace}, we bound the cardinality of the following sets for $k=0,\ldots,3$,
 \begin{align*}
 	\widetilde \cD_3&(\Z^{-1};H,k,\delta)\\
 	& =\{A\in \cD_3\(\Z^{-1};H,\delta\):~ \text{exactly } k \text{ of } Q_1,Q_2,Q_3\text{ are nonzero}\}. 
 \end{align*}
 
By permuting the rows of $A$, we can assume that analogues of  the equation~\eqref{eqn:Lap2E3}
 obtained with respect to other rows all have at most $k$ nonzero coefficients. In particular, in the case $k=0$, 
all  $2\times 2$ submatrices of $A$ are singular.
 
We first prove that the case $k=1$ does not exist. In other words,
we  prove that the set $\widetilde\cD_3(\Z^{-1};H,1,\delta)$ is empty for all $\delta$.

 	Let 
\[A=\begin{pmatrix}
 		1/a_{1,1}&1/a_{1,2}&1/a_{1,3}\\
 		1/a_{2,1}&1/a_{2,2}&1/a_{2,3}\\
 		1/a_{3,1}&1/a_{3,2}&1/a_{3,3}
 	\end{pmatrix} \in \widetilde\cD_3(\Z^{-1};H,1,\delta).
\] 
From its definition, exactly two of $Q_1$, $Q_2$, and $Q_3$ is zero. Without loss of generality, suppose that $Q_2=Q_3=0$. This means
\[ 
 \det \begin{pmatrix}
1/a_{2,1}&1/a_{2,2}\\
1/a_{3,1}&1/a_{3,2}
 	\end{pmatrix} = \det \begin{pmatrix}
 	1/a_{2,1}&1/a_{2,3}\\
 	1/a_{3,1}&1/a_{3,3}
 	\end{pmatrix}=0.\]
 	
 	By the determinant formula, we have 
\[
 		a_{2,1}a_{3,2}=a_{2,2}a_{3,1} \mand a_{2,1}a_{3,3}=a_{3,1}a_{2,3}.
\]
 By elimination, since all variables are nonzero, we have 
 \[
 	a_{2,2}a_{3,3}=a_{3,2}a_{2,3}.
\] 
This implies $Q_1=0$, which, however,  contradicts the definition of $\widetilde\cD_3(\Z^{-1};H,1,\delta)$. This completes the proof.
 
 Now we just need to consider the cases $k=0,2,3$. For the case $k=3$, we fix the last two rows of $A$ in $O(H^6)$ ways. Next, we may apply Lemma~\ref{lem:aixi-Z n=2,3} to the equation~\eqref{eqn:Lap2E3} to see that there are at most $H^{1+o(1)}$ possible values for $(1/a_{1,1},1/a_{1,2},1/a_{2,3})$. Multiplying these, we have \begin{equation}\label{eq:Z-1k3}
\widetilde\cD_3(\Z^{-1};H,3,\delta) \le H^{7+o(1)}
 \end{equation} for all $\delta$.
 
 For the case $k=2$, without loss of generality let $Q_3=0$. From the case $n=2$ and $\delta=0$, we may choose the elements of the submatrix 
 \[\begin{pmatrix}
 	1/a_{2,1}&1/a_{2,2}\\
 	1/a_{3,1}&1/a_{3,2}
	 \end{pmatrix}
\] 
in at most $H^{2+o(1)}$ ways. We may also choose the elements of the last column of $A$ in $H^3$ ways. To bound the number of possible tuples $(1/a_{1,1},1/a_{1,2})$, we se that in this case, the equation~\eqref{eqn:Lap2E3} becomes 
 \[
 Q_1/a_{1,1}+Q_2/a_{1,2}=Q_0.
\]
Using Lemma~\ref{lem:aixi-Z n=2,3}, we see that this equation has at most $H^{o(1)}$ solutions if $\delta\ne 0$. Furthermore, if $\delta=0$, we can see that this equation has at most $O(H)$ solutions. Multiplying all bounds, we obtain 
\begin{equation}\label{eq:Z-1k2}
\widetilde \cD_3(\Z^{-1};H,2,\delta) \le \begin{cases}
 	H^{5+o(1)}&,\text{ if }\delta \ne 0,\\
 	H^{6+o(1)}&,\text{ if }\delta = 0.
 \end{cases}
 \end{equation}

 For the case $k=0$ (which can only happen when $\delta=0$), as we have mentioned above, 
  we can assume that every $2\times 2$ submatrix of $A$ is singular. 
 We recall that there are at most $H^{2+o(1)}$ ways to choose the matrix 
 \[
 \begin{pmatrix}
 	1/a_{2,1}&1/a_{2,2}\\
 	1/a_{3,1}&1/a_{3,2}
 \end{pmatrix}
 \] that is singular. We fix such a choice. Next, we consider the matrix \[\begin{pmatrix}
 1/a_{1,1}&1/a_{1,2}\\
 1/a_{2,1}&1/a_{2,2}
 \end{pmatrix},\] which  is singular.  Therefore,  
\[
 a_{1,1}a_{2,2}=a_{1,2}a_{2,1}.
\]
We may now choose $a_{1,1}$ in $O(H)$ ways.
Since $a_{2,1}$ and $a_{2,2}$ are already fixed, $a_{1,2}$ is uniquely defined.
Thus we see that there are at most $H^{3+o(1)}$ choices for the above six entries 
of $A$ and the entries which are still free are 
$\(a_{1,3}, a_{2,3}, a_{3.3}\)$.  We can estimate their quantity non-trivially but even the trivial
bound $O(H^3)$ already implies  
\begin{equation}\label{eq:Z-1k0}
 	\widetilde\cD_3(\Z^{-1};H,0,\delta) \le H^{6+o(1)},
 \end{equation}
 which is satisfactory.

 Combining equations~\eqref{eq:Z-1k3}, \eqref{eq:Z-1k2} and~\eqref{eq:Z-1k0} completes the proof of the case $n=3$.

\subsection{The case $n\ge 4$} 
We first consider  the problem of bounding $\#\cD_n(\Z^{-1};H,0)$,
 the number of $n\times n$ matrices with entries in $\cE(H)$ and of determinant 0. We note that all matrices in the corresponding set have rank at most $n-1$. Therefore, we have 
\[
    \#\cD_n(\Z^{-1};H,0) \le \sum_{r=1}^{n-1} \#\cL_{n,n,r} (\Z^{-1};H),
\]
where $\#\cL_{n,n,r} (\Z^{-1};H)$ is the number of matrices in $\cM_n(\Z^{-1};H)$ with rank $r$. Recalling Theorem~\ref{thm:rank 1/Z}, we have 
\begin{align*}
\#\cL_{n,n, r}&\(\Z^{-1};H\) \\
&\le \max\left\{H^{nr+n-r} ,  H^{nr+2n-2r+o(1) }  ,  H^{(n-r)(r+1)/2+nr+o(1)} \right\}\\
&\le \begin{cases}
    H^{n+o(1)}, & \text{ if }  r=1,\\
    H^{3n-2+o(1)}, & \text{ if }  r=2,\\
    H^{(3nr+n-r-r^2)/2+o(1)},& \text{ if } r\ge 3.
\end{cases}
\end{align*} 
Therefore, we have
\begin{align*}
    \#\cD_n(\Z^{-1};H,0) &\le \sum_{r=1}^{n-1} \#\cL_{n,n, r}\(\Z^{-1};H\), \\
    &\le H^{n+o(1)}+H^{3n-2+o(1)}+ \sum_{r=3}^{n-1} H^{(3nr+n-r-r^2)/2+o(1)},\\
    &\le H^{3n-2+o(1)}+ H^{n^2-n/2+o(1)} \le H^{n^2-n/2+o(1)},
\end{align*}  
where  we have used that  $(3nr+n-r-r^2)/2$ is increasing in $r\in [3,n-1]$. This completes the proof in the case of $\#\cD_n(\Z^{-1};H,0)$.    
 
 Next, we move to  the case $\delta \ne 0$. 	Consider an $n\times n$ matrix $A=\(a_{i,j}\)_{i,j=1}^n$ with entries in $\cE(H)$ and determinant $\delta$, and fix the last $n-1$ rows of $A$. The proof follows exactly as the proof in Section~\ref{laplace}. Indeed, following same discussion line by line, and notations, therein (with $\Q$ replaced by $\Z^{-1}$), we arrive at the equation
 	\begin{equation}\label{eqn:Lap2E}
 		\sum_{j=1}^n Q_j a_{1,j} = Q_0, \qquad a_{1,j}\in\cE(H),\quad j=1,\ldots,n,
 	\end{equation}
and we can write
 	\[	
 	\cD_n\(\Z^{-1};H,\delta\) =\bigcup_{k=1}^n	\widetilde \cD_n(\Z^{-1};H,k,\delta),
 	\]
	where $Q_j$, $j=0,\ldots,n$, and $\cD_n(\Z^{-1};H,k,\delta)$, $k=1,\ldots,n$, are defined as in Section~\ref{laplace}.

 	To estimate $\# \widetilde \cD_n(\Z^{-1};H,k,\delta)$, 
 	without loss of generality we can only consider $A \in \widetilde \cD_n(\Z^{-1};H,k,\delta)$ 
 	with  $Q_0, Q_1, \ldots, Q_k\ne 0$. 
 	
 The rest of the proof also follows same line as the proof of the previous section, but for completeness we provide the details such that the bounds we obtain are clear.
 
 	We note that the equation~\eqref{eqn:Lap2E} satisfies the conditions of Lemma~\ref{lem:aixi-Z}. 
 	
 	We first consider the case $k = n$ and bound $\# \widetilde \cD_n(\Z^{-1};H,n,\delta)$. In this case,  we can directly apply Lemma~\ref{lem:aixi-Z-inhom} to conclude  that the equation~\eqref{eqn:Lap2E}   has at most $H^{n/2-1/(2n-2)+o(1)}$ solutions. 
 	This implies that for each  possible  $(n-1)\times n$ bottom  sub-matrix of $A$, there are at most $H^{n/2-1/(2n-2)+o(1)}$ possible choices for the first row of $A$. Since we can choose the bottom $n-1$ rows in  
 	$O\(H^{(n-1)n}\)$ ways, we see that there are at most
 	\begin{equation}
 		\label{eq:k=nE}
		\begin{split}
 \widetilde \cD_n(\Z^{-1};H,n,\delta)&  \le  
	H^{n/2-1/(2n-2)+o(1)} \cdot H^{(n-1)n}\\
	& \le  	H^{n^2-n/2-1/(2n-2)+o(1)}
\end{split}
\end{equation}
 	possible matrices $A$ with $\det A = \delta$ in this case.

  Now we give an upper bound for $\# \widetilde \cD_n(\Z^{-1};H,k,\delta)$ for all $1\le k <n$. Let $A\in  \widetilde \cD_n(\Z^{-1};H,k,\delta)$ for some $k< n$.  In particular, we have $Q_n=0$ and $Q_1\ne 0$. The first equality implies $\det M_{1,n}=0$. Hence, there are  $\# \cD_{n-1}\(\Z^{-1};H,0\)$ possibilities for the $(n-1)\times (n-1)$ submatrix $M_{1,n}$ of $A$ that has determinant zero. Moreover, there are at most $H^{n}$ ways of choosing the last column $(a_{1,n},\ldots,a_{n,n})^T$. Hence, in total, there are at most $H^{n}\#\cD_{n-1}\(\Z^{-1};H,0\)$ possibilities for $a_{1,n}$ and the bottom $n-1$ rows of $A$. 
  We now fix $a_{1,n}$ and the bottom $n-1$ rows, and we need to count only the number of possibilities for  $(a_{1,1},\ldots,a_{1,n-1})$.
 	
 	To count the number of possible tuples $(a_{1,1},\ldots,a_{1,n-1})$, we look at the equation~\eqref{eqn:Lap2E}, recalling that we assume that $Q_1,\ldots,Q_k\ne 0$ for $1\le k\le n-1$. If $k=1$, then $a_{1,1}$ is uniquely defined and we can choose $(a_{1,2},\ldots,a_{1,n-1})$ in $H^{n-2}$ ways. Hence, in this case the number of possible matrices $A$ is 
 	\begin{equation}
 		\label{eq:k=1E}
				\begin{split}
 \widetilde \cD_n(\Z^{-1};H,1,\delta) & \ll 
 		H^{n}\#\cD_{n-1}\(\Z^{-1};H,0\) \cdot H^{n-2}\\
		& =H^{2n-2}\#\cD_{n-1}\(\Z^{-1};H,0\).
\end{split}
 	\end{equation}
 	
Next, for $k=2$ and $k=3$, we  apply Lemma~\ref{lem:aixi-Z n=2,3} to the equation~\eqref{eqn:Lap2},
	 from where we get that there are at most $H^{o(1)}$ possible values of $\(a_{1,1},a_{1,2}\)$ for $k=2$ and $H^{1+o(1)}$ possible values of $\(a_{1,1},a_{1,2},a_{1,3}\)$ for $k=3$, respectively. Using the trivial bound $H^{n-3}$ (for $k=2$) and $H^{n-4}$ (for $k=3$), respectively, for the rest of entries, the number of possible matrices $A$ for $k=2$ and $k=3$ is bounded above as following:  

 	\begin{equation}
	\begin{split}
 		\label{eq:k=2E}
		\widetilde \cD_n(\Z^{-1};H,k,\delta)& \le
 		H^{n}\#\cD_{n-1}\(\Z^{-1};H,0\) \cdot H^{n-3+o(1)}\\
		& =  H^{2n-3+o(1)}\#\cD_{n-1}\(\Z^{-1};H,0\),
		\end{split} 
 	\end{equation} 

We assume now that $4\le k<n$ and apply Lemma~\ref{lem:aixi-Z-inhom} to the equation~\eqref{eqn:Lap2E}, from where we get that there are at most $H^{k/2-1/(2k-2)+o(1)}$ possible values of $\(a_{1,1},a_{1,2},\ldots, a_{1,k}\)$. Using the trivial bound $H^{n-1-k}$ for the rest of entries $a_{1(k+1)},\ldots, a_{1,n-1}$, the number of possible matrices $A$ in this case is
 	\begin{equation}
	\begin{split}
 		\label{eq:k>2E}
		&  \widetilde \cD_n(\Z^{-1};H,k,\delta)\\
		 &\qquad \qquad  \le 
 		H^{n}\#\cD_{n-1}\(\Z^{-1};H,0\) \cdot H^{k/2-1/(2k-2)+o(1)}\cdot H^{n-1-k}\\
		  &\qquad \qquad  \le H^{2n-k/2 -1 -1/(2k-2)+o(1)}\#\cD_{n-1}\(\Z^{-1};H,0\)\\
		 &\qquad \qquad  \le H^{2n-19/6+o(1)}\#\cD_{n-1}\(\Z^{-1};H,0\),
		\end{split} 
 	\end{equation} where the last inequality is true since $k/2+1+1/(2k-2)$ attain its minimum at $k=4$.
	 	
 	Putting the bounds~\eqref{eq:k=nE}, \eqref{eq:k=1E},  \eqref{eq:k=2E}  and~\eqref{eq:k>2E}  together, we have 
 \begin{align*}
&\#\cD_n\(\Z^{-1};H,\delta\)\\
& \qquad \quad \le \max\left\{H^{n^2-n/2-1/(2n-2)+o(1)}, H^{2n-2}\#\cD_{n-1}\(\Z^{-1};H,0\) \right\}.
 \end{align*}
Applying our results for $\#\cD_{n-1}\(\Z^{-1};H,0\)$ completes the proof.

\section{Proof of Theorem~\ref{thm:pnhfE}} 
\subsection{The case $n=2$ and $f(X)\neq X^2$} We first bound $ 	\#\cP_{n}(\Z^{-1};H,f)$ for the case $n=2$. Suppose that 
\[
 f(X)=X^2-\alpha X+\beta,
\] 
for some $\alpha, \beta\in \Q$.
 
 We recall that for a matrix
 \[A=\begin{pmatrix}
 	1/x_1&1/x_2\\1/x_3&1/x_4
 \end{pmatrix}\in  \cP_{2}(\Z^{-1};H,f),
 \] 
 its determinant $\det A$ is $\beta$. Now, if $\beta \neq 0$, this implies  
\[
 \#\cP_{n}(\Z^{-1};H,f)\le \#\cD_{n}(\Z^{-1};H,\beta) \le H^{o(1)},
\] 
which proves the assertion.
 
 Now, suppose $\beta=0$. This implies that 
 \begin{equation}\label{eq:x1x4x2x3}
 	x_1x_4=x_2x_3.
 \end{equation} We recall that  	$\alpha=\Tr(A)$. Let $\alpha=r/s$ with $\gcd(r,s)=1$. We note that $r\ne 0$. We have  \begin{equation}\label{eq:x1x4}
\frac{r}{s}=\frac{1}{x_1}+\frac{1}{x_4},
 \end{equation} which is equivalent to
 \[
(rx_1+s)(rx_4+s)=s^2.
\] 
By applying the same argument as in Section~\ref{sec:n=2E}, there are at most 
 $H^{o(1)}$ possible solutions of the equation~\eqref{eq:x1x4}. Then, coming to the equation~\eqref{eq:x1x4x2x3}, we see that, by Lemma~\ref{lem:div} for each choice of $\(x_1,x_4\)$ there are at most $|x_1x_4|^{o(1)}\le H^{o(1)}$ possible choices 
 of $\(x_2,x_3\)$. We conclude that if $\beta=0$ and $\alpha\ne 0$, there are  at most $H^{o(1)}$ matrices $A\in \cP_{2}(\Z^{-1};H,f)$.  This completes the proof of the case $n=2$ and $f(X)\neq X^2$.
 
 \subsection{The case $n=2$ and $f(X)= X^2$} Let \[A=\begin{pmatrix}
 	1/x_1&1/x_2\\1/x_3&1/x_4
 \end{pmatrix}\in  \cP_{2}(\Z^{-1};H,X^2).
 \] Using algebraic manipulation and the fact that $\Tr A=\det A=0$, we have that $A\in   \cP_{2}(\Z^{-1};H,X^2)$ if and only if the entries of $A$ satisfy \begin{equation}\label{eq:X^2}
 	x_1x_4=x_2x_3 \mand  x_1=-x_4,
 \end{equation} 
 and $|x_1|,|x_2|,|x_3|,|x_4|\le H$. Therefore, we have 
 \[
 x_1^2=-x_2x_3.
\]
 
 From the last equation, we see that counting $\# \cP_{2}(\Z^{-1};H,X^2)$ is equivalent to (up to sign changes)  counting the number of integer pairs $(a,b)$ with $1\le a,b \le H$ such that $ab$ is a perfect square. More precisely, we note that each possible integer pair $(a,b)$ with $1\le a,b \le H$ such that 
 \[
 ab=c^2
\] 
for some integer $1\le c\le H$ corresponds to four quadruplets of solutions of the equation~\eqref{eq:X^2}, with the following bijections: \begin{align*}
 	(a,b)&\mapsto (c,a,-b,-c)\\
 	(a,b)&\mapsto (c,-a,b,-c)\\
 	(a,b)&\mapsto (-c,a,-b,c)\\
 	(a,b)&\mapsto (-c,-a,b,c).
 \end{align*}
We then apply Lemma~\ref{lem:square} to finish the proof.
 
 \subsection{The case $n\ge 3$} 
 We proceed as in the proof of Theorem~\ref{thm:pnhf}. Indeed, we count matrices $A \in \cP_n(\Z^{-1};f)$, and thus, in particular,  matrices $A=(1/a_{i,j})_{1\le i,j\le n}$, $1/a_{i,j}\in\cE(H)$, for which $\Tr A$ and $\Tr A^2$ are fixed. These lead again, after clearing denominators, to two equations in Egyptian fractions:
\begin{equation}
\label{eq:treq egyp}
s_1\frac{1}{a_{1,1}}+\cdots+s_1\frac{1}{a_{n,n}}=r_1
\end{equation}
in variables $1/a_{i,i}\in\cE(H)$, $i=1,\ldots,n$, with fixed $r_1,s_1\in\Z$, $s_1\ne 0$, of size
\[
|r_1|,|s_1|\le H^{O(1)},
\]
and
\begin{equation}
\label{eq:tr2eq egyp}
\sum_{1\le i<j \le n}s_2\frac{1}{A_{i,j}}=r_2,\qquad A_{i,j}=a_{i,j}a_{j,i},
\end{equation}
in variables $1/A_{i,j}\in\cE(H^2)$, $1\le i<j\le n$, with fixed $r_2\in\Z$, $s_2\ne 0$, of size
\[
|r_2|,|s_2|\le H^{O(1)}.
\]

If $n=3$, then by Lemma~\ref{lem:aixi-Z n=2,3}, the number of solutions to the equation~\eqref{eq:treq egyp} is at most 
$H^{1+o(1)}$. 
The equation~\eqref{eq:tr2eq egyp} has $3(3-1)/2=3$ variables $A_{i,j}$, $1\le i<j\le 3$, thus we can apply again Lemma~\ref{lem:aixi-Z n=2,3} to obtain at most $H^{2+o(1)}$ solutions $1/A_{i,j}\in\cE(H^2)$, $1\le i<j\le n$. Since $a_{i,j}, a_{j,i}\mid A_{i,j}$ for all $1\le i<j\le n$, by Lemma~\ref{lem:div}, for each fixed $A_{i,j}$, we have at most $H^{o(1)}$ possibilities for $1/a_{i,j},1/a_{j,i}\in\cE(H)$.

Therefore, in total, we have at most
\[
H^{3+o(1)}
\]
matrices $A\in  \cP_3(\Z^{-1};H,f)$.

We consider now the case $n\ge 4$, and the following two subcases based on the coefficient $f_{n-1}$ of $X^{n-1}$ of $f$ being zero or not.

If $f_{n-1}\ne 0$, then we can apply Lemma~\ref{lem:aixi-Z-inhom} to~\eqref{eq:treq egyp} and obtain at most
\[
H^{n/2-1/(2n-2)+o(1)}
\]
solutions $1/a_{i,i}\in\cE(H)$, $i=1,\ldots,n$.

Applying Lemma~\ref{lem:aixi-Z}  to~\eqref{eq:tr2eq egyp}, we obtain  
\[
H^{2\cdot(n^2-n)/4+o(1)}=H^{(n^2-n)/2+o(1)}
\]
 solutions $A_{i,j}\in\cE(H^2)$, $1\le i<j\le n$. As above, since $a_{i,j}, a_{j,i}\mid A_{i,j}$ for all $1\le i<j\le n$, by Lemma~\ref{lem:div}, for each fixed $A_{i,j}$, we have at most $H^{o(1)}$ possibilities for $1/a_{i,j},1/a_{j,i}\in\cE(H)$. 

Therefore, we conclude that there are at most 
\[
H^{n/2-1/(2n-2)+(n^2-n)/2+o(1)}=H^{n^2/2-1/(2n-2)+o(1)}
\]
matrices $A\in  \cP_n(\Z^{-1};f)$ when $f_{n-1}\ne 0$.

If $f_{n-1}=0$, then we apply Lemma~\ref{lem:aixi-Z}  to both~\eqref{eq:treq egyp} and~\eqref{eq:tr2eq egyp}, to conclude as above that we have  at most
\[
H^{n/2+(n^2-n)/2+o(1)}=H^{n^2/2+o(1)}
\]
matrices $A\in  \cP_n(\Z^{-1};f)$ when $f_{n-1}= 0$. This concludes the proof.

 \section{Comments} 
 First we note that the questions we considered in this paper can also be asked in the setting of symmetric matrices, and other special matrices, which certainly requires new ideas.
 
 It is also easy to see that one of possible ways to improve 
 Theorem~\ref{thm:pnhfE} is to get a good bound on the number of solutions of the system of equations 
 \[
    \sum_{i=1}^n Q_i/x_i = Q_0 \mand 
       \sum_{i=1}^n R_i/x_i^2 = R_0 
\]
with non-zero coefficients $Q_i$ and $R_i$ 
and with $1/x_i \in \cE(H)$, $i=1,\ldots,n$.  
Indeed, having  a bound 
we can  control  the frequency of the solutions to~\eqref{eq:treq egyp}, which lead to the value $r_2 = 0$ in~\eqref{eq:tr2eq egyp}, enabling us to 
apply  Lemma~\ref{lem:aixi-Z-inhom} to the 
equation~\eqref{eq:tr2eq egyp} for each the remaining solutions to~\eqref{eq:treq egyp}.

In fact, this seems to be a question of independent interest, and in fact 
 also suggests to consider more general systems with $s$ consecutive 
 powers:
 \[
    \sum_{i=1}^n Q_{i,\nu}/x_i^\nu = Q_{0,\nu}, \qquad 
 \nu =1, \ldots, s. 
\]
Even the case of $Q_{i,\nu} =\pm 1$, $Q_{0,\nu} = 0$, 
$i=1, \ldots, n$, $\nu =1, \ldots, s$, is already interesting. 

One can also study similar systems of equation in Farey fractions, 
which also  seems to be a new interesting question. 

Finally, it is quite natural to attempt to combine the ideas of Blomer and Li~\cite{BlLi} with bounds of Theorem~\ref{thm:rank Q} and~\ref{thm:rank 1/Z} and study correlations between the values of linear forms in 
Farey and Egyptian fractions. 
This may require quite significant efforts, though.

 \section*{Acknowledgements} 
 The authors are grateful to the referee for carefully reading our paper and for useful comments and also to Nick Rome
 for noticing that in the journal version the main and  error terms in Theorem~\ref{thm:pnhfE} and Lemma~\ref{lem:square} 
 all contain an extra factor of $\log H$. 
 
 During the preparation of this paper, M.~A., A.~O. and I.~S.  were supported by  Australian Research Council Grants DP200100355 and  DP230100530, M.~A. was also supported by a UNSW Tuition Fee Scholarship, and V.~K. was supported by NSF Mathematical Sciences Research program grant DMS-2202128. 
V.~K., A.~O., I.~S. would also like to thank the Mittag-Leffler Institute for its hospitality 
and excellent working environment.


\begin{thebibliography}{99}

\bibitem{ALPS}
A. Abrams, Z. Landau, J. Pommersheim and N. Srivastava, 
`On eigenvalue gaps of integer matrices',  
{\it Math.  Comp.\/}, {\bf 94} (2025), 853--862.


\bibitem{Afif2} M. Afifurrahman, `A uniform formula on the number of integer matrices with given determinant and height',\textit{Preprint}, 2024, available  from \url{https://arxiv.org/abs/2407.08191}. 

\bibitem{Afif} M. Afifurrahman, `Some counting questions for matrix products',
\textit{Bull. Aust. Math. Soc.} (to appear).

\bibitem{AhmShp} O. Ahmadi and I. E. Shparlinski, `Distribution of matrices with restricted entries over finite fields', \textit{Indag. Math.},  
\textbf{18} (2007), 327--337.

 \bibitem{BlLi} V. Blomer and  J. Li, `Correlations of values of random diagonal forms', 
\textit{Intern. Math. Res.  Notices}, \textbf{2023} (2023),  20296--20336. 

\bibitem{BlLu} V. Blomer and C. Lutsko, `Hyperbolic lattice point counting in unbounded rank',
\textit{J. Reine Angew. Math.}  (to appear).

 \bibitem{BlKu} T. F. Bloom and V.  Kuperberg, `Odd moments and adding fractions', \textit{Preprint}, 2023, available  from \url{https://arxiv.org/abs/2312.09021}. 

\bibitem{BGP}  E. Bombieri,  A. Granville  and J. Pintz, `Squares in arithmetic progressions',
\textit{Duke Math. J.} \textbf{66} (1992),  369--385. 

\bibitem{BoZa}  E. Bombieri and U. Zannier,  
`A note on squares in arithmetic progressions. II',
\textit{Atti Accad. Naz. Lincei Cl. Sci. Fis. Mat. Natur. Rend. Lincei, Ser. IX, Mat. Appl.}, 
\textbf{13} (2002),   69--75. 
 
\bibitem{dBr} N. G. de Bruijn, `On the number of positive integers $\le x$ and free 
of prime factors $>y$, II', 
\textit{Indag.\ Math.}, \textbf{28} (1966), 239--247.

\bibitem{dlBr} R. de la Bret{\`e}che, `Sur le nombre de matrices al{\'e}atoires {\'a} coefficients rationnels',
\textit{Quart. J. Math.},  \textbf{68} (2017), 935--955. 

\bibitem{dlBKS} R. de la Bret{\`e}che, P. Kurlberg, and I. E. Shparlinski, `On the number of products which form perfect powers and discriminants of
multiquadratic extensions',
{\it Intern. Math. Res.  Notices\/}, \textbf{22} (2021), 17140--17169.

\bibitem{BE} T. D. Browning and C. Elsholtz, `The number of representations of rationals as a sum of unit fractions', \textit{Illinois J. Math.}, \textbf{55} (2011) 685--696.


 \bibitem{BOS} K. Bulinski, A. Ostafe and I. E. Shparlinski,  `Counting embeddings of free groups into $\SL_2(\mathbb{Z})$ and its subgroups',
\textit{Ann. Scuola Normale Pisa},   (to appear).

 \bibitem{Dest} K. Destagnol, `Manin’s conjecture for a family of varieties of higher dimension', \textit{Math. Proc. Cambridge Philos. Soc.}, \textbf{166} (2019),   433--486.

\bibitem{DRS}
W. Duke, Z. Rudnick and P. Sarnak, `Density of integer points on
affine homogeneous varieties', \textit{Duke Math. J.},
{\bf 71} (1993), 143--179.

\bibitem{E-BLS} D.  El-Baz, M. Lee and A.  Str{\"o}mbergsson, 
`Effective equidistribution of primitive rational points on expanding horospheres',
\textit{Preprint}, 2022, available  from \url{https://arxiv.org/abs/2212.07408}.   

\bibitem{EskKat} A. Eskin and Y. R. Katznelson, `Singular symmetric matrices', 
\textit{Duke Math. J.}, \textbf{79} (1995), 515--547.


\bibitem{EMS} A. Eskin, S. Mozes and N. Shah, `Unipotent flows and counting lattice points
on homogeneous spaces', \textit{Ann. Math.}, \textbf{43} (1996), 253--299.

\bibitem{Fis} S. D. Fisher, `Classroom notes:
Matrices over a finite field', \textit{Amer. Math. Monthly}, \textbf{73} (1966) 639--641.

\bibitem{GNY} A. Gorodnik, A. Nevo and G. Yehoshua, `Counting lattice points in norm balls on higher rank simple Lie
groups', \textit{Math. Res. Lett.}, \textbf{24} (2017), 1285--1306. 

\bibitem{HOS} P. Habegger, A. Ostafe and I. E. Shparlinski,  `Integer matrices with a given characteristic polynomial and multiplicative dependence of matrices',
\textit{Preprint}, 2022, available  from \url{https://arxiv.org/abs/2203.03880}. 

\bibitem{IK} H. Iwaniec and E. Kowalski, \textit{Analytic number theory}, Amer. Math. Soc., Providence, RI, 2004.

\bibitem{Kar} A. A. Karatsuba, `Analogues of Kloosterman sums',  \textit{Izv. Math.} \textbf{59} (1995), 971--981.


\bibitem{Kat1}  Y. R. Katznelson,
`Singular matrices and a uniform bound for 
congruence groups of ${\SL}_n(\Z)$',
\textit{Duke Math. J.}, \textbf{69} (1993), 121--136.

\bibitem{Kat2}  
Y. R. Katznelson, `Integral matrices of fixed rank', \textit{Proc. Amer. Math. Soc.}, 
{\bf 120} (1994), 667--675.


\bibitem{KK1} S. V. Konyagin and M. A. Korolev, `On a symmetric Diophantine equation with reciprocals', \textit{Proc. Steklov Inst. Math.}, \textbf{294} (2016), 67--77 
(translated from \textit{Trudy Matem. Inst.  Steklova}).

\bibitem{KK2} S. V. Konyagin and M. A. Korolev,  'Irreducible solutions of an equation involving reciprocals', \textit{Sbornik: Mathematics}, \textbf{208} (2017),  1818--1834
(translated from \textit{Matem. Sbornik}). 

 \bibitem{MeSh}
 L. M\'erai and I. E. Shparlinski,  `Number of characteristic polynomials of matrices with bounded height',
\textit{Lin. Algebra  and Appl.},  (to appear). 

 \bibitem{MOS} A. Mohammadi, A. Ostafe and I. E. Shparlinski,  `On some matrix counting problems',
 \textit{J. Lond. Math. Soc.}, \textbf{10} 2024, Art.~e70044.


\bibitem{OS2} A. Ostafe and I. E. Shparlinski,  `On the sparsity of non-diagonalisable integer matrices and matrices with a given discriminant',
\textit{Preprint}, 2023, available  from \url{https://arxiv.org/abs/2312.12626}. 

\bibitem{PSSS}
F.  Pappalardi, M. Sha, I. E. Shparlinski and C. L. Stewart,
`On multiplicatively dependent vectors of algebraic numbers', 
\textit{Trans. Amer. Math. Soc.}, \textbf{370}  (2018),  6221--6244.

\bibitem{reiner} I. Reiner, `On the number of matrices with given characteristic polynomial',
\textit{Illinois J. Math.}, \textbf{5} (1961), 324--329.

\bibitem{Rud} W. Rudin, 
`Trigonometric series with gaps',
\textit{J. Math. Mech.}, \textbf{9} (1960), 203--227. 

\bibitem{Shah} N. Shah, `Counting integral matrices with a given characteristic polynomial',
\textit{Sankhya: The Indian J. Stat., Ser. A}, \textbf{62} (2000), 386--412.


 \bibitem{Shp1} I. E. Shparlinski, 
`Some counting questions for matrices with restricted entries',
 \textit{Linear Algebra Appl.}, \textbf{432} (2010), 155--160.

\bibitem{Shp2} I. E. Shparlinski, `Linear equations with rational fractions of bounded height and stochastic matrices',
\textit{Quart. J. Math.}, \textbf{69} (2018), 487-f-499.

\bibitem{Walf} A. Walfisz, `Weylsche Exponentialsummen in der neueren Zahlentheorie', Leipzig: B.G. Teubner, 1963.

\bibitem{WX}  D. Wei and F. Xu, `Counting integral points in certain homogeneous spaces',
\textit{J. Algebra}, \textbf{448} (2016), 350--398.

\end{thebibliography}
\end{document}